\numberwithin{equation}{section}
\newtheorem{thm}[equation]{Theorem}
\newtheorem{lemma}[equation]{Lemma}
\newtheorem{prop}[equation]{Proposition}
\newtheorem{rmk}[equation]{Remark}
\newtheorem{ex}[equation]{Example}
\newtheorem{defi}{Definition}
\newcommand{\Sec}{\operatorname{Sec}}
\newcommand{\Z}[0]{\mathbb{Z}}
\newcommand{\R}[0]{\mathbb{R}}
\newcommand{\C}[0]{\mathbb{C}}
\newcommand{\N}[0]{\mathbb{N}}
\newcommand{\p}[0]{\mathbb{P}}
\newcommand{\LL}[0]{\mathscr{L}}
\begin{document}

\title{Secant Degree of Toric Surfaces and Delightful Planar Toric Degenerations}
\author{Elisa Postinghel}
\address{Centre of Mathematics for Applications, University of Oslo - P.O. Box 1053 Blindern, N0-0316 Oslo, Norway}
\email[1]{elisa.postinghel@cma.uio.no}
\email[2]{elisa.postinghel@gmail.com}
\keywords{Toric varieties, secant varieties, degenerations, polytopes, delightful triangulations} 

\thanks{The author was partially supported by Marie-Curie IT Network SAGA, [FP7/2007-2013] grant agreement PITN-GA-
2008-214584.}
\subjclass[2010]{Primary 14M25; Secondary 14D06, 51N35}

\maketitle

\begin{abstract}
The $k$-secant degree is studied with a combinatorial approach. A planar toric degeneration of any projective toric surface $X$ corresponds to a regular unimodular  triangulation $D$ of the polytope defining $X$. If the secant ideal of the initial ideal with respect to $D$ coincides with the initial ideal of the secant ideal, then $D$ is said to be delightful and the $k$-secant degree of $X$ can be easily computed. All delightful triangulations of toric surfaces having sectional genus $g\leq1$ are completely classified and, for $g\geq2$, a lower bound for the $2$- and $3$-secant degree, by means of the combinatorial geometry and the singularities of non-delightful triangulations, is established.

 \end{abstract}
 

\section*{Introduction}

There is a long tradition within algebraic geometry that studies the dimension and the degree of $k$-secant varieties.
Let $X\subseteq \p^r$ be a projective, irreducible variety of dimension $n$. Its $k$-\emph{secant variety} $\textrm{Sec}_k(X)$ is defined to be the closure of the union of all the $\p^{k-1}$'s in $\p^r$ meeting $X$ in  $k$ independent points.  If $\textrm{Sec}_k(X)$ has the expected dimension $kn+k-1$, what is the number $\nu_k(X)$ of $k$-secant $\p^{k-1}$'s to $X$ intersecting a general subspace of codimension $kn+k-1$ in $\p^r$? This is a problem which is unsolved in general. 

Our approach to the problem of computing the number $\nu_k$ for toric varieties is the one of  Ciliberto,  Dumitrescu and  Miranda  \cite{CDM} that is close to that of  Sturmfels and  Sullivant \cite{SS}.
Given a projective toric surface $X$, we perform \emph{planar toric degenerations}, i.e., we consider \emph{regular unimodular triangulations} $D$ of the polytope $P$ which defines $X$. 
The ideal $\mathcal{I}_0$ of the central fiber is the monomial initial ideal of the ideal $\mathcal{I}_X$ of $X$ with respect to a suitable term order $\prec$ which  corresponds to the triangulation $D$ (see  \cite[Theor. 8.3]{St}): $\mathcal{I}_0=\textrm{in}_{\prec}(\mathcal{I}_X).$ 

In Section \ref{toric definitions} and Section \ref{secants def section}  we introduce the objects of our study: convex lattice polytopes, toric varieties,  toric degenerations and $k$-secant varieties, with particular attention to the problem of computing the $k$-secant degree of toric surfaces.

In Section \ref{delightfulness} we introduce the notion of $k$-delightful planar toric degenerations of toric varieties: if the $k$-secant ideal of the initial ideal $\mathcal{I}_0$  of $X$  with respect to the degeneration coincides with the initial ideal of the $k$-secant ideal of $X$, then the degeneration is $k$-delightful. Sturmfels and Sullivant proved  in \cite[Theor. 5.4]{SS} that if there exists a triangulation $D$ of the polytope $P$ defining $X$ with at least one \emph{skew} $k$-\emph{set}, i.e., a subset of $k$  triangles of $D$ that are pairwise disjoint, then the $k$-secant variety of $X$ has the expected dimension. Moreover the number of such skew $k$-sets  is a lower bound for the number $\nu_k(X)$, see Theorem \ref{old lower bound}. If equality holds, then $D$ is $k$-delightful and the flat limit of the $k$-secant variety is a union of linear subspaces of dimension $kn+k-1$, hence the $k$-secant degree is computed. This bound is almost never sharp, indeed $k$-delightful degenerations are rare.

In Section \ref{results secant} we approach the secant degree computation and we give a lower bound for $\nu_k$, for $k=2,3$. The main tool is keeping into account the singularities of the configuration $D$ and explaining how they produce $k$-delightfulness defect. 
Our results can be regarded as the beginning of a similar study for the $k$-secant varieties of toric surfaces for $k\geq 4$ and, in higher dimension, for $k\geq2$.

The problem of finding delightful triangulations of polytopes was raised by Sturmfels and Sullivant \cite[Sect. 5]{SS}. They explored the existence of such triangulations for Veronese varieties, Segre varieties and rational normal scrolls. In Section \ref{classification delightful g=0,1} we provide a classification of all delightful triangulations for toric surfaces with sectional genus $0$ and $1$.

\section{Convex lattice polytopes and toric varieties}\label{toric definitions}

\subsection{Census of polytopes in $\R^2$ with $g\leq1$}\label{class pol}

A lattice point in $\R^n$ is a point with integral coordinates. A lattice polytope in $\R^n$ is a polytope whose vertices are lattice points. 
The \emph{normalized Ehrhart polynomial} of a lattice polytope $P$ in $\R^n$ is the numerical function 
$E_P: \N  \rightarrow\N$, $ t \mapsto \#(tP\cap \Z^n)$.
It is known that $E_P$ is a polynomial of degree $\dim(P)$:
$
E_P=\sum_{i=0}^{\dim(P)} \frac{c_i}{i!}t^i.
$
The leading coefficient $c_{\dim(P)}$ is denoted by $\textrm{Vol}(P)$ and it is called the \emph{(normalized) volume} of $P$. If $\dim(P)=n$, we have
$\textrm{Vol}(P)=n! \cdot{V(P)},$
where $V(P)$ is the usual Euclidean volume of $P$ (see \cite[p. 36]{St}). If $\dim(P)=1$, $Vol(P)+1$ turns out to be equal to the number of lattice points enclosed by $P$.  If $\dim(P)=n=2$, we denote by $Area(P)$ the normalized volume of $P$.

Set $n=2$ and denote by $g$ the number of interior lattice points of a plane polytope.
In this section we recall the classification of all convex lattice polytopes in $\R^2$ with $g\leq 1$, due to Rabinowitz \cite{Ra}.  
To this end, we need to  define an equivalence relation between planar polytopes (see \cite[p. 18]{Du} or \cite[p. 1]{Ra}). 
An integral unimodular affine transformation, also known as an \emph{equiaffinity}, in the plane is a linear transformation followed by a translation such that, furthermore, the corresponding matrix has determinant $1$ and integral entries. For example the matrix
$$ \left(\begin{array}{cc} 1 & 1 \\ 0 & 1 \end{array}\right)$$
acts on a polytope by sending the point $(x,y)^T\in\R^2$ to the point $(x+y,y)^T\in\R^2$: the points on the $x$-axis are fixed, while the points on the axis $y=k$ are shifted by $k$ on the right as for example in the picture:
\begin{center}
\setlength{\unitlength}{0.333333mm}
\begin{picture}(40,40)(0,0)\put(0,0){\line(0,1){40}}\put(0,0){\line(1,0){40}}\put(0,40){\line(1,0){40}}\put(40,0){\line(0,1){40}}
\end{picture} 
 $\ \ \  \longrightarrow\ \ \ $
\begin{picture}(40,40)(0,0)
\put(0,0){\line(1,0){40}}\put(0,0){\line(1,1){40}}
\put(40,40){\line(1,0){40}}\put(40,0){\line(1,1){40}}
\end{picture}
\end{center}
Normalized area, number of lattice points and convexity of a plane polyotope are preserved under these transformations. Two plane polytopes are said to be \emph{lattice equivalent} if one can be transformed into the other via an equiaffinity, look for example to the above picture.

We will refer to \cite{Ra} for the proofs of the following results.

\begin{lemma}[The $x$-axis Lemma]
Let $q_1, q_2$ be the vertices of an edge of length $m$ of a polytope. There exists an equiaffinity
that maps $q_1$ into the origin, maps $q_2$ into the point $(m,0)$ on the positive x-axis, and maps
all the other vertices into points above the x-axis.
\end{lemma}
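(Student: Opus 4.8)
The plan is to build the equiaffinity as a translation followed by a linear $SL_2(\Z)$ map, and then to control on which side of the $x$-axis the polytope lies by a convexity argument. First I would translate by $-q_1$: since $q_1$ is a vertex of a lattice polytope it is a lattice point, so this is an integral translation, and it sends $q_1 \mapsto (0,0)$ and $q_2 \mapsto v := q_2 - q_1$.

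Next, because the edge has (lattice) length $m$, the vector $v$ equals $m$ times a primitive lattice vector $w = (a,b)$, so $\gcd(a,b)=1$ and $v = mw$. Bézout's identity furnishes integers $p,q$ with $pa+qb=1$, and then
\[
A = \begin{pmatrix} p & q \\ -b & a \end{pmatrix}
\]
has $\det A = pa+qb = 1$, hence lies in $SL_2(\Z)$, and it satisfies $Aw = (1,0)^T$ and therefore $Av = (m,0)^T$. Composing the translation with $A$ thus maps $q_1 \mapsto (0,0)$ and $q_2 \mapsto (m,0)$, laying the edge along the positive $x$-axis. Since $\overline{q_1q_2}$ is an edge of the convex polytope, the line through it is a supporting line; consequently, after these transformations, every vertex other than $q_1,q_2$ lies strictly to one and the same side of the $x$-axis.

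It remains to ensure that this side is the upper half-plane, and this is the only delicate point. One cannot simply reflect via $(x,y)\mapsto(x,-y)$, since that map has determinant $-1$, nor can one repair a bad sign afterwards by any further element of $SL_2(\Z)$ fixing both $(0,0)$ and $(m,0)$: such a map is forced to be a shear $(x,y)\mapsto(x+ty,y)$, which preserves the sign of $y$. The way around this is to fix the orientation at the outset. I would label the two endpoints of the edge so that the interior of the polytope lies to the \emph{left} of the directed segment $\overrightarrow{q_1q_2}$; exactly one of the two labelings achieves this. Because both the translation and $A$ are orientation-preserving, they carry ``left of $\overrightarrow{q_1q_2}$'' to ``left of $\overrightarrow{(0,0)\,(m,0)}$'', which is precisely the upper half-plane, so all the remaining vertices land above the $x$-axis. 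Verifying this left/right dichotomy, and that it is invariant under the $SL_2(\Z)$ change of coordinates, is the crux of the argument; the translation and Bézout steps are routine bookkeeping.
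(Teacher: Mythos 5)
The paper does not actually prove this lemma --- it defers to Rabinowitz \cite{Ra} --- so there is no internal proof to compare against; judged on its own, your argument is correct and is the standard one: translate $q_1$ to the origin, write $q_2-q_1=mw$ with $w=(a,b)$ primitive (this is exactly what ``normalized length $m$'' means), and use B\'ezout to produce a matrix in $SL_2(\Z)$ sending $w$ to $(1,0)^T$; the supporting-line property of an edge then puts all remaining vertices strictly on one side of the $x$-axis. The one subtle point is the one you isolate, and you handle it correctly: since the paper's equiaffinities have determinant exactly $1$, every admissible map taking $q_1\mapsto(0,0)$ and $q_2\mapsto(m,0)$ differs from yours by an upper-triangular unipotent shear, which cannot change the sign of $y$; hence the half-plane the polygon lands in is forced by the orientation of the ordered pair $(q_1,q_2)$, and the statement is only true if one is free to choose which endpoint is called $q_1$ (for the triangle with vertices $(0,0),(1,0),(0,1)$ and the ordered edge $q_1=(0,0)$, $q_2=(0,1)$, no determinant-$1$ integral affine map does what the lemma asks). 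Your resolution --- label the endpoints so that the polygon lies to the left of $\overrightarrow{q_1q_2}$ and use that orientation-preserving maps carry ``left'' to the upper half-plane --- is exactly right; the alternative would be to read ``unimodular'' as $\det=\pm1$, in which case $(x,y)\mapsto(x,-y)$ fixes the image edge pointwise and swaps the half-planes, making the labeling irrelevant. Either reading is harmless for the way the lemma is used (as a normalization up to lattice equivalence), but it is worth having made the orientation issue explicit.
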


\begin{thm}[Characterization of polytopes with no interior lattice point]\label{census0}
If $P$ is a polytope with $g = 0$, then $P$ is lattice equivalent to one of the following:
\begin{center}
\setlength{\unitlength}{0.333333mm}
 \begin{picture}(100,30)(0,-10)
\put(0,0){\line(0,1){20}}\put(0,0){\line(1,0){60}}\put(0,20){\line(3,-1){60}}\put(-10,5){\textsc{$1$}}\put(20,-10){\textsc{$\delta$}}\put(70,0){\textsc{$\delta\geq1$,}}    
\end{picture} \ \ \ \ \ 
\begin{picture}(60,50)(0,-10)
\put(0,0){\line(0,1){40}}\put(0,0){\line(1,0){40}}\put(0,40){\line(1,-1){40}}\put(-7,15){\textsc{$2$}}\put(15,-10){\textsc{$2$}}\put(50,0){,}
\end{picture}\ \ \ \ \ 
\begin{picture}(140,30)(0,-10)
\put(0,0){\line(0,1){20}}\put(0,0){\line(1,0){80}}\put(0,20){\line(1,0){40}}\put(40,20){\line(2,-1){40}}\put(-10,5){$1$}\put(-10,5){$1$}\put(20,25){$\delta_1$}\put(20,-10){$\delta_2$}\put(90,0){\textsc{$\delta_2\geq\delta_1\geq1$.}}
\end{picture}
\end{center}
\end{thm}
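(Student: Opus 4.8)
The plan is to stratify the hollow polygons ($g=0$) by their \emph{lattice width}. For a primitive vector $u\in\Z^2$ put $w_u(P)=\max_{x\in P}\langle u,x\rangle-\min_{x\in P}\langle u,x\rangle$; since these extrema are attained at vertices of $P$, which are lattice points, $w_u(P)\in\N$, and the lattice width $w(P)=\min_u w_u(P)$ is invariant under equiaffinities. Choosing $u$ to realize the minimum and applying the element of $SL_2(\Z)$ that sends $u$ to $(0,1)^T$ followed by a translation (an equiaffinity, as provided in spirit by the $x$-axis Lemma), I may assume $P\subseteq\{0\le y\le h\}$ with $h=w(P)$ and that $P$ meets both lines $y=0$ and $y=h$. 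The statement then reduces to the two cases $h=1$ and $h\ge 2$, and the claim is that the first produces the triangle and trapezoid families while the second produces only the exceptional triangle.

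First I would treat $h=1$. Every lattice point of $P$ has $y\in\{0,1\}$, so $g=0$ holds automatically, and the two faces $s_0=P\cap\{y=0\}$ and $s_1=P\cap\{y=1\}$ are each a vertex or an edge, with lattice endpoints. By convexity no other point of $P$ on $y=0$ or $y=1$ can be a vertex, so $P=\mathrm{conv}(s_0\cup s_1)$ has at most four vertices. A translation puts the left endpoint of $s_0$ at the origin, and the shear $\left(\begin{smallmatrix}1&c\\0&1\end{smallmatrix}\right)$ displayed above (fixing $y=0$ and shifting $y=1$ by the integer $c$) moves the left endpoint of $s_1$ to $(0,1)$. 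Writing $\delta_2,\delta_1$ for the integral lengths of $s_0,s_1$: if one of them vanishes, $P$ is the triangle $\mathrm{conv}\{(0,0),(\delta,0),(0,1)\}$ of the first family (after the reflection $(x,y)\mapsto(x,1-y)$ if it is $s_0$ that degenerates); if both are positive, $P=\mathrm{conv}\{(0,0),(\delta_2,0),(\delta_1,1),(0,1)\}$, and reflecting if necessary to arrange $\delta_2\ge\delta_1\ge 1$ gives exactly the third family.

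The core of the argument is the case $h\ge 2$, where I must show $P\cong 2\Delta:=\mathrm{conv}\{(0,0),(2,0),(0,2)\}$. For each interior height $0<j<h$ the slice $P\cap\{y=j\}$ is a segment whose relative interior lies in the interior of $P$; as $g=0$ this relative interior contains no lattice point, forcing its horizontal length $L(j)$ to be at most $1$. Since $y\mapsto L(y)$ is concave on $[0,h]$, the inequalities $L(1),\dots,L(h-1)\le 1$ bound $L(0)$ and $L(h)$ by $h/(h-1)$ and control the whole profile; feeding these bounds into Pick's formula $A=\tfrac{B}{2}-1$ (valid since $I=0$) yields a contradiction for $h\ge 3$ and forces $h=2$. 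It then remains to pin down the shape: the middle slice has length $\le 1$ and, after a horizontal shear, sits in $[0,1]\times\{1\}$ with an endpoint at a lattice point, and hollowness together with $w_u(P)\ge 2$ in \emph{every} direction $u$ rigidifies the bottom edge on $y=0$ and the top face on $y=2$, leaving $2\Delta$ as the only possibility up to equiaffinity.

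The main obstacle is precisely this $h\ge 2$ analysis: it amounts to the two-dimensional flatness statement that a hollow lattice polygon has lattice width at most $2$, together with the uniqueness of the extremal case. The width-$1$ case is essentially bookkeeping once $P$ is squeezed between two consecutive lattice lines, whereas ruling out $h\ge 3$ and extracting $2\Delta$ from $h=2$ requires combining the slice-length estimate in the minimal direction with the width lower bound in all other directions, which is where the convexity and Pick input do the real work.
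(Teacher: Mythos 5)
The paper does not prove this theorem; it explicitly defers to Rabinowitz \cite{Ra}, whose argument is an elementary case analysis built on the $x$-axis Lemma (normalize a longest edge onto the $x$-axis and enumerate the possible remaining vertices). Your stratification by lattice width is therefore a genuinely different route, and your width-$1$ analysis is complete and correct: once $P$ is squeezed into $\{0\le y\le 1\}$ it is the convex hull of its two horizontal faces, and the shear normalization produces exactly the first and third families. (Minor point: the reflection $(x,y)\mapsto(x,1-y)$ has determinant $-1$, whereas the paper's equiaffinities have determinant $+1$; this is repaired by using the point reflection $(x,y)\mapsto(\delta-x,1-y)$ followed by a shear, so it is only a bookkeeping slip.)

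The genuine gap is in the case $h\ge 2$, which you yourself identify as the core of the argument but do not actually carry out. The assertion that ``feeding these bounds into Pick's formula yields a contradiction for $h\ge 3$'' does not follow from the ingredients you have on the table. The slice estimates $L(j)\le 1$ for $0<j<h$ and $L(0),L(h)\le h/(h-1)$ yield only \emph{upper} bounds: since $L$ is piecewise linear with breakpoints at integer heights, they give $A=\int_0^h L\le h+1$, and counting at most two lattice points per integer height gives $B\le 2(h+1)$; Pick's identity $A=B/2-1$ is perfectly compatible with both (e.g.\ $A\le h$, $B\le 2h+2$). To reach a contradiction you must inject a \emph{lower} bound on the area (or an incompatibility between the slice profile and the hypothesis that $w_u(P)\ge h$ for \emph{every} primitive $u$, not just $u=(0,1)$), and no such bound is derived. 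Similarly, in the case $h=2$ the claim that the middle slice has a lattice endpoint is unjustified (its endpoints lie on edges of $P$ and need not be lattice points a priori), and the final ``rigidification'' to $2\Delta$ is asserted rather than proven. In effect, the statement ``a hollow lattice polygon has lattice width at most $2$, with equality only for $2\Delta$'' is equivalent to the nontrivial content of the theorem, and your write-up postpones exactly this step. Either supply the width argument in full (it can be done, e.g.\ by shearing to minimize the horizontal drift of the short interior slices and then contradicting $w_{(1,c)}(P)\ge h$ for a well-chosen integer $c$), or fall back on the direct enumeration of \cite{Ra}.
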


\begin{thm}[Characterization of polytopes with one interior lattice point]\label{census1}
If $P$ is a polytope with $g = 1$, then $P$ is lattice equivalent to one of the following:
\begin{itemize}
\item Triangles:
\begin{center}
\setlength{\unitlength}{0.333333mm}
\begin{picture}(80,45)(0,0)
\put(0,0){\line(0,1){40}}
\put(0,0){\line(1,0){80}}
\put(0,40){\line(2,-1){80}}
\put(-3,-3){$\circ$}\put(17,-3){$\circ$}\put(37,-3){$\circ$}\put(57,-3){$\circ$}\put(77,-3){$\circ$}
\put(-3,17){$\circ$}\put(17,17){$\circ$}\put(37,17){$\circ$}\put(-3,37){$\circ$}
\end{picture}\ \ \ \ 
\begin{picture}(60,45)(0,0)
\put(0,0){\line(0,1){40}}
\put(0,0){\line(1,0){60}}
\put(0,40){\line(3,-2){60}}
\put(-3,-3){$\circ$}\put(17,-3){$\circ$}\put(37,-3){$\circ$}\put(57,-3){$\circ$}
\put(-3,17){$\circ$}\put(17,17){$\circ$}\put(-3,37){$\circ$}
\end{picture}\ \ \ \ 
\begin{picture}(40,45)(0,0)
\put(0,0){\line(1,2){20}}
\put(0,0){\line(1,0){40}}
\put(40,0){\line(-1,2){20}}
\put(-3,-3){$\circ$}\put(17,-3){$\circ$}\put(37,-3){$\circ$}
\put(17,17){$\circ$}\put(17,37){$\circ$}
\end{picture}
\end{center}
\item Quadrilaterals:
\begin{center}
\setlength{\unitlength}{0.333333mm}
\begin{picture}(40,45)(0,0)
\put(0,0){\line(0,1){20}}
\put(0,0){\line(1,0){20}}
\put(0,20){\line(2,1){40}}
\put(20,0){\line(1,2){20}}
\put(-3,-3){$\circ$}\put(17,-3){$\circ$}
\put(-3,17){$\circ$}\put(17,17){$\circ$}\put(37,37){$\circ$}
\end{picture}\ \ \ \ 
\begin{picture}(40,45)(0,0)
\put(0,20){\line(1,1){20}}
\put(0,20){\line(1,-1){20}}
\put(40,20){\line(-1,1){20}}
\put(40,20){\line(-1,-1){20}}
\put(17,-3){$\circ$}\put(37,17){$\circ$}\put(17,17){$\circ$}\put(17,37){$\circ$}\put(-3,17){$\circ$}
\end{picture}\ \ \ \ 
\begin{picture}(40,45)(0,0)
\put(0,0){\line(0,1){40}}\put(0,40){\line(1,0){20}}\put(0,0){\line(1,0){40}}\put(20,40){\line(1,-2){20}}
\put(17,-3){$\circ$}\put(37,-3){$\circ$}\put(17,17){$\circ$}\put(17,37){$\circ$}\put(-3,17){$\circ$}
\put(-3,-3){$\circ$}\put(-3,37){$\circ$}
\end{picture}\ \ \ \ 
\begin{picture}(40,45)(0,0)
\put(0,0){\line(0,1){40}}
\put(0,0){\line(1,0){40}}
\put(40,0){\line(0,1){40}}
\put(0,40){\line(1,0){40}}
\put(17,-3){$\circ$}\put(37,-3){$\circ$}\put(17,17){$\circ$}\put(17,37){$\circ$}\put(-3,17){$\circ$}
\put(-3,-3){$\circ$}\put(-3,37){$\circ$}\put(37,17){$\circ$}\put(37,37){$\circ$}
\end{picture}\ \ \ \ 
\begin{picture}(60,45)(0,0)
\put(0,0){\line(0,1){20}}
\put(0,0){\line(1,0){60}}
\put(0,20){\line(1,1){20}}
\put(20,40){\line(1,-1){40}}
\put(17,-3){$\circ$}\put(37,-3){$\circ$}\put(17,17){$\circ$}\put(17,37){$\circ$}\put(-3,17){$\circ$}
\put(-3,-3){$\circ$}\put(57,-3){$\circ$}\put(37,17){$\circ$}
\end{picture}\ \ \ \ 
\begin{picture}(40,45)(0,0)
\put(0,20){\line(1,1){20}}\put(0,0){\line(0,1){20}}
\put(0,0){\line(1,0){40}}
\put(20,40){\line(1,-2){20}}
\put(17,-3){$\circ$}\put(37,-3){$\circ$}\put(17,17){$\circ$}\put(17,37){$\circ$}\put(-3,17){$\circ$}
\put(-3,-3){$\circ$}
\end{picture}\ \ \ \ 
\begin{picture}(60,45)(0,0)
\put(0,0){\line(0,1){40}}
\put(0,0){\line(1,0){60}}
\put(0,40){\line(1,0){20}}
\put(20,40){\line(1,-1){40}}
\put(17,-3){$\circ$}\put(37,-3){$\circ$}\put(17,17){$\circ$}\put(17,37){$\circ$}\put(-3,17){$\circ$}
\put(-3,-3){$\circ$}\put(57,-3){$\circ$}\put(37,17){$\circ$}\put(-3,37){$\circ$}
\end{picture}
\end{center}
\item Pentagons
\begin{center}
\setlength{\unitlength}{0.333333mm}
\begin{picture}(40,45)(0,0)
\put(20,0){\line(1,1){20}}
\put(20,0){\line(-1,1){20}}
\put(0,40){\line(1,0){20}}
\put(0,20){\line(0,1){20}}
\put(20,40){\line(1,-1){20}}
\put(17,-3){$\circ$}\put(37,17){$\circ$}\put(17,17){$\circ$}\put(17,37){$\circ$}\put(-3,17){$\circ$}\put(-3,37){$\circ$}
\end{picture}\ \ \ \ 
\begin{picture}(40,45)(0,0)
\put(0,0){\line(1,0){40}}
\put(0,0){\line(0,1){20}}
\put(40,0){\line(0,1){20}}
\put(0,20){\line(1,1){20}}
\put(20,40){\line(1,-1){20}}
\put(17,-3){$\circ$}\put(37,-3){$\circ$}\put(17,17){$\circ$}\put(17,37){$\circ$}\put(-3,17){$\circ$}
\put(-3,-3){$\circ$}\put(37,17){$\circ$}
\end{picture}\ \ \ \ 
\begin{picture}(40,45)(0,0)
\put(0,0){\line(1,0){40}}
\put(0,0){\line(0,1){40}}
\put(40,0){\line(0,1){20}}
\put(0,40){\line(1,0){20}}
\put(20,40){\line(1,-1){20}}
\put(17,-3){$\circ$}\put(37,-3){$\circ$}\put(17,17){$\circ$}\put(17,37){$\circ$}\put(-3,17){$\circ$}
\put(-3,-3){$\circ$}\put(37,17){$\circ$}\put(-3,37){$\circ$}
\end{picture}
\end{center}
\item Hexagons
\begin{center}
\setlength{\unitlength}{0.333333mm}
\begin{picture}(40,45)(0,0)
\put(0,20){\line(0,1){20}}
\put(0,20){\line(1,-1){20}}
\put(20,0){\line(1,0){20}}
\put(40,0){\line(0,1){20}}
\put(0,40){\line(1,0){20}}
\put(20,40){\line(1,-1){20}}
\put(17,-3){$\circ$}\put(37,-3){$\circ$}\put(17,17){$\circ$}\put(17,37){$\circ$}\put(-3,17){$\circ$}
\put(37,17){$\circ$}\put(-3,37){$\circ$}
\end{picture}
\end{center}
\end{itemize}
\end{thm}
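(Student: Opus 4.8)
The plan is to turn the statement into a finite search and then reduce each admissible case to one of the displayed normal forms. First I would translate $P$ so that its unique interior lattice point becomes the origin $O$; the equiaffinities fixing $O$ are exactly the linear maps in $SL_2(\Z)$, and a concluding translation will move each polytope to the representative drawn above. The engine of the argument is an a priori bound on the size of $P$. Writing $b$ for the number of boundary lattice points, Pick's formula together with $g=1$ gives, for the normalized area, $\mathrm{Area}(P)=2(1+\tfrac{b}{2}-1)=b$, so it suffices to bound $b$. Here I would invoke Scott's inequality $b\le 2g+7=9$; its proof is exactly in the spirit of the $x$-axis Lemma, placing a longest edge of $P$ on the $x$-axis and then bounding both the height of the resulting strip and its horizontal width by observing that an extra lattice row or column would force a second interior point.

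Once $\mathrm{Area}(P)=b\le 9$ is known, two structural facts cut the search down. First, every vertex $v$ of $P$ is a primitive lattice vector: the open segment $Ov$ lies in the interior of $P$, so a lattice point on it would be a second interior point, contradicting $g=1$. Second, listing the vertices $v_1,\dots,v_k$ counterclockwise, the shoelace identity reads $\sum_i \det(v_i,v_{i+1})=\mathrm{Area}(P)=b$, and since $O$ lies in the interior each determinant is a \emph{positive} integer; hence $k\le b\le 9$, and a short refinement (or the enumeration below) sharpens this to $k\le 6$. Thus $P$ is a triangle, a quadrilateral, a pentagon, or a hexagon, which is precisely the fourfold division of the statement.

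It remains to enumerate. For each fixed $k$ I would use $SL_2(\Z)$ to normalize, for instance sending one edge direction to $(1,0)$ and pinning a chosen vertex, after which the remaining vertices range over the finitely many lattice points permitted by convexity, by primitivity, by the bound $b\le 9$, and by the requirement that $O$ be the \emph{only} interior lattice point. Running through these possibilities produces a finite list of candidates, and a final translation matches each surviving candidate with one of the figures above.

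The routine part is the finiteness; the real work, and the main obstacle, is carrying out this case analysis so that it is simultaneously \emph{complete} and \emph{irredundant}. Completeness means checking that no admissible vertex configuration has been overlooked, and irredundancy means verifying that the listed representatives are pairwise inequivalent. For the latter I would rely on lattice-equivalence invariants: the normalized area $b$, the number of vertices $k$, and the cyclic sequence of edge lattice-lengths. Because the equiaffinities used here have determinant $+1$, this sequence is an invariant only up to cyclic rotation and not up to reflection, so I would additionally track orientation in order to decide whether any representative is chiral and must therefore be recorded together with its mirror image. Establishing the sharp bound $k\le 6$ cleanly, rather than merely as a byproduct of the enumeration, is the other point that demands care.
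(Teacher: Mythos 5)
The paper does not actually prove this statement: it is reproduced from Rabinowitz's census and the text explicitly defers to \cite{Ra} for the proof (even noting that the published version of that census was itself missing one quadrilateral). So the only meaningful comparison is with Rabinowitz's argument, and your reduction is essentially that one: normalize the interior lattice point to the origin, use Pick's theorem to get $\mathrm{Area}(P)=b$, bound $b$ by Scott's inequality $b\le 2g+7=9$, note that the vertices are primitive and that the positive integers $\det(v_i,v_{i+1})$ sum to $b$, and then search. These preliminary steps are all correct: the primitivity argument via the open segment $Ov$ is sound, the residual symmetry group after pinning the interior point is indeed $SL_2(\Z)$, and your caveat about determinant $+1$ versus $\pm1$ and chirality is a legitimate one given the paper's definition of equiaffinity.

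The genuine gap is that your proof stops exactly where the theorem begins. For a classification result the finite case analysis \emph{is} the content, and ``running through these possibilities produces a finite list'' is a promissory note, not a proof --- as you acknowledge. Nor is this a formal quibble here, because the enumeration is precisely where this census has historically gone wrong: the published \cite{Ra} omitted a quadrilateral, and the list displayed in the statement you are proving is itself incomplete. It contains no triangle of normalized area $9$ or $3$, yet $3\Delta=\mathrm{conv}\{(0,0),(3,0),(0,3)\}$ (the paper's own $P^1(3,9,3)$, used repeatedly in later sections) and $\mathrm{conv}\{(0,0),(2,1),(1,2)\}$ both have exactly one interior lattice point, and lattice equivalence preserves normalized area, so neither can be equivalent to anything drawn. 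A correctly executed search along your lines must produce $16$ classes ($5$ triangles, $7$ quadrilaterals, $3$ pentagons, $1$ hexagon) and would therefore have to be reconciled with the $14$ figures shown --- which is exactly the kind of completeness failure your method, left unexecuted, cannot detect. Two smaller points: Scott's inequality is a genuine theorem whose proof is more delicate than ``an extra lattice row would force a second interior point,'' so it should be cited or proved in full; and the bound of six on the number of vertices is asserted but neither proved nor cited.
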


The last quadrilateral was missing in the published paper \cite{Ra} and was later added to the classification.

We will use the notation $P^g(l,d,m)$ for these polytopes, where $g$ is the number of interior lattice points, $l$ is the number of edges (or vertices), $d$ is the normalized area and $m$ is the normalized maximal edge length. Actually we will denote in this way both the equiaffinity class and the representatives of the class, each time specifying what representative we are dealing with. The two quadrilaterals with $g=1$ and $l=d=4$ are not distinguished by this notation, because they both have $m=1$. So one could write $P^1(4,4,1)$ for the first one and $\tilde{P}^1(4,4,1)$ for the second one, but actually it does not matter since we will not deal with them in this paper.

\subsection{Toric varieties via polytopes and toric degenerations}

A convex lattice polytope $P$ in $\R^n$ defines a \emph{toric variety} $X_P$ of dimension $n$ endowed with an ample line bundle $\LL$ and therefore a morphism in $\p^r$, where $r+1$ equals the number of lattice points of $P$. Let $P \cap \Z^n=\{\underline{m}_0,\dots ,\underline{m}_r\}$ be the set of the lattice points of $P$, with $\underline{m}_i=(m_{i1},\dots,m_{in})$,  $i=0,\dots,r$. Consider the monomial map
$$
\begin{array}{llll}
\Phi_P: & (\C^{\ast})^{n} & \to &\p^r\\
\ & \underline{x}&\mapsto &[\underline{x}^{\underline{m}_0},\dots,\underline{x}^{\underline{m}_r}]
\end{array}
$$
where $\underline{x}=(x_1,\dots,x_n)$ and $\underline{x}^{\underline{m}_i}=x_1^{m_{i1}}\cdots x_n^{m_{in}}$. The projective toric variety $X_P\in \p^r$ is defined to be the closure  of the image of $\Phi_P$.
The degree of  $X_P$ equals the normalized volume
$\textrm{Vol}(P)$. Lattice equivalent polytopes in $\R^2$ define the same toric surface. 

 A \emph{subdivision} $D$ of $P$ is a partition of $P$ given by a finite family $\{Q_i\}_{i\in I}$ of convex sub-polytopes of maximal  dimension  such that 
\begin{itemize}
\item $\bigcup_{i\in I}Q_i=P$,
\item $Q_i\cap Q_j$, with $i\neq j$,  is either a common face or it is empty.
\end{itemize}

A subdivision $D$ is said to be \emph{regular} if there exists a piecewise linear positive function $F$ with values in $\R$ defined over $P$, verifying the following requests:
\begin{itemize}
\item each $Q_i$ is the orthogonal projection of the $n$-dimensional faces of the graph polytope $G(F):=\left\{(x,z)\in P\times \R: 0\leq z\leq F(x)\right\}$ of $F$ on $z=0$;
\item $F$ is \emph{strictly convex}.
\end{itemize}
We will call such an $F$ a \emph{lifting function} as in \cite{Hu}.
Given a regular subdivision $D$ of  $P$, we define the associated morphism as follows:
\begin{eqnarray}\label{morph D}
\begin{array}{llll}
\Phi_D: & (\C^{\ast})^{n}\times\C^\ast & \to &\p^r\times \C\\
\ & (\underline{x},t)&\mapsto &([t^{F(\underline{m}_0)}\underline{x}^{\underline{m}_0}:\cdots:t^{F(\underline{m}_r)}\underline{x}^{\underline{m}_r}],t)
\end{array}
\end{eqnarray}
The closure of $\Phi_D((\C^{\ast})^{n}\times\{t\})$, for all $t\neq 0$, is a variety $X_t$ projectively equivalent to  $X_P$. Let $X_0$ be the flat limit of $X_t$, when $t$ tends to zero: such a variety is the union of the varieties $X_{Q_i}$, $i\in I$. Indeed, the restriction $F_{|Q_i}$ of $F$ to $Q_i$ has equation $a_1 x_1+\cdots a_n x_n+b,$ for some $a_1,\dots,a_n,b\in \R$; we can always compose $\Phi_D$ with a reparametrization action of the torus $\C^{\ast}$,
$
x_1,\dots,x_n,t \mapsto t^{-a_1}x_1,\dots,t^{-a_n}x_n,t,
$
getting
$$
\begin{array}{lll}
 (\C^{\ast})^{n+1}& \to &\p^r\times\C\\
 (\underline{x},t)&\mapsto &([\cdots:t^{F(\underline{m}_i)-F_{Q_i}(\underline{m}_i)}\underline{x}^{\underline{m}_i}:\cdots],t).
\end{array}
$$
By letting $t\rightarrow0$, one sees that $X_{Q_i}$ sits in the flat limit $X_0$ of $X_t$.
The map (\ref{morph D}) can be extended to a map 
$$
\begin{array}{lll}
 X_P\times\C^{\ast} & \to &\p^r \times\C\\
 (\underline{x},t)&\mapsto &([t^{F(\underline{m}_0)}\underline{x}^{\underline{m}_0}:\cdots:t^{F(\underline{m}_r)}\underline{x}^{\underline{m}_r}],t)
\end{array}
$$
and the flat morphism
$
\pi_D:([t^{F(\underline{m}_0)}\underline{x}^{\underline{m}_0}:\cdots:t^{F(\underline{m}_r)}\underline{x}^{\underline{m}_r}],t) \mapsto t
$
provides a $1$-dimensional embedded degeneration of $X$ to  $X_0$.
$\pi_D$ is said to be a \emph{toric degeneration} of the toric variety $X_P$ and we will use the notation $X_0=\lim_D X$. 
The reducible central fiber $X_0$ is given by the subdivision $D$ of $P$: the irreducible components of $X_0$ are the $X_{Q_i}$'s. Notice that if $i\neq j$ and $Q_i$ and $Q_j$ have a common face $Q_i\cap Q_j$, then $X_{Q_i}$ and $X_{Q_j}$ intersect along $X_{Q_i\cap Q_j}$. 

If $n=2$ and the reducible central fiber $X_0$ is a union of planes, i.e. if the subdivision $D$ of the polytope $P$ is a regular unimodular triangulation of it, we say that $\pi_D$ is a \emph{planar toric degeneration} of $X_P$.
In this case the family $D$ of sub-polytopes of $P$ is a simplicial complex, whose maximal simplices are the $Q_i$'s.
The notion of toric degeneration to union of $\p^n$'s leads to the notion of term order. In fact there is a one-to-one correspondence between regular triangulations and term orders. Let $\prec$ be  any term order in
 $\C[x_0,\dots,x_r]$ and let $\mathcal{I}_0 := \textrm{in}_\prec(\mathcal{I})$ be the initial ideal of the ideal
 $\mathcal{I}$ of $X$. The radical of  $\mathcal{I}_0$ is a squarefree monomial ideal whose corresponding simplicial complex $\Delta_\prec(\mathcal{I}_0)$ is a regular triangulation of the polytope $P$ defining $X$. Conversely any regular triangulation of $P$ is of that form, for some $\prec$, see \cite[Theor. 8.3]{St}.

\section{Secant varieties}\label{secants def section}

Let $X \subset \p^{r}$ be an irreducible, non-degenerate, projective variety of dimension $n$. Fix an  integer $k\ge2$ and consider the $k$-th symmetric product $\textrm{Sym}^k(X)$. We define the \emph{abstract} $k$-\emph{th secant variety} of $X$, $S^k_X \subseteq \textrm{Sym}^k(X)\times \p^r$, as the Zariski closure of the set
$$
\{((x_1,\dots,x_k),z)\in \textrm{Sym}^k(X)\times \p^r : \textrm{dim}(\pi)=k-1 \textrm{ and } z \in\pi\}
$$
where $\pi=\langle x_1,\dots,x_k\rangle$. It is irreducible of dimension $kn+k-1$. 
Consider the projection $p^k_X$ on the second factor and define the $k$-\emph{th secant variety} of $X$, $\textrm{Sec}_k(X):=p^k_X(S^k_X)$,
as the image of $S^k_X$ in $\p^r$.
It is an irreducible algebraic variety of dimension $\textrm{dim}(\textrm{Sec}_k(X))\leq \textrm{min}\{kn+k-1,r\}$.
The right hand side is called the \emph{expected dimension} of $\textrm{Sec}_k(X)$. 
If strict inequality holds, $X$ is said to be $k$-\emph{defective}. 

The general fiber of $p^k_X$ is pure of dimension $kn+k-1-\textrm{dim(Sec}_k(X))$. Denote by $\mu_k(X)$ the number of irreducible components of this fiber. If $\textrm{dim(Sec}_k(X))=kn+k-1 \leq r,$ then $p^k_X$ is generically finite and $\mu_k(X)=\textrm{deg}(p^k_X)$, i.e., $\mu_k(X)$ is the number of $k$-secant $\p^{k-1}$'s to $X$ passing through the general point of $\textrm{Sec}_k(X)$ and it is called the $k$-\emph{secant order} of $X$, see \cite{CC2}. This number is equal to one unless $X$ is $k$-\emph{weakly defective}. The weakly defective surfaces are classified in  \cite{CC}.
Let $L$ be a general linear subspace of $\p^{r}$ of codimension $kn+k-1$: $X$ has  
$$\nu_{k}(X)=\mu_{k}(X)\cdot \textrm{deg}(\textrm{Sec}_{k}(X))$$
$k$-secant $\p^{k-1}$'s meeting $L$. Let $\pi_L$ be the projection of $X$ from $L$ to $\p^{kn+k-2}$:
the image of $X$ has $\nu_k(X)$ new $k$-secant $\p^{k-2}$'s that $X$ did not have.
The number $\nu_{k}(X)$ is called the \emph{number of apparent} $k$-\emph{secant} $\p^{k-2}$'\emph{s to} $X$.
In particular $\nu_2(X)$ corresponds to the number of double points that $X$ acquires in a general projection to $\p^{2n}$, $\nu_3(X)$ is the number of trisecant lines in a general projection of $X$ to $\p^{3n+1}$ and so on. 
Notice that if $\nu_{k}(X)=1$, then  $\textrm{Sec}_k(X)=\p^r$ and
  $\mu_{k}(X)=1$ which means that for a general points of $\textrm{Sec}_k(X)$ there is a unique $k$-secant $\p^{k-1}$.

Let $X$ be a smooth surface. Severi's \emph{double point formula} gives the  number of nodes of a general projection of  $X$ to $\p^4$:
$$
\nu_2(X)=\frac{d(d-5)}{2}-5g+6 p_a -K^2+11,
$$
where $d$ is the degree, $g$ is the sectional genus, $p_a$ is the arithmetic genus and $K$ is the canonical divisor of $X$.
In particular, if $X=X_P$ is a projective toric surface, then 
$$
\nu_2(X)=\frac{1}{2}(d^2-10d+5B+2V-12),
$$
where $d$ is the normalized area of the polytope $P$, $B$ is the number of lattice points on the boundary  and $V$ is the number of vertices of $P$, see \cite[Cor. 1.6]{CS}.

If $X$ is a surface not containing lines, a formula for $\nu_3(X)$, known as \emph{LeBarz' trisecant formula for  surfaces in} $\p^7$ (see \cite[p. 7]{LB} or \cite[p. 202]{Le}), is
$$
\nu_3(X)=\frac{1}{6}(d^3-30d^2+224d-3d(5HK+K^2-c_2)+192HK+56K^2-40c_2)
$$
where  $H$ is the hyperplane divisor and $c_2$ is the second Chern class of $X$. Moreover, if $X$ contains a finite number of lines, the contribution of each line to $\nu_2(X)$ is
$
-{{4+a}\choose{3}},
$
where $a \in \Z$ is its self-intersection. 
There are similar, but more complicated, formulas for the number $\nu_k(X)$ in the curve case (see \cite[Chapt. VIII]{ACGH}), and in the surface case, if $X$ does not contain any line, for $k \leq 5$ (see \cite{LB,Le}).

Unfortunately, the Severi's formula for $\nu_2(X)$ does not apply if $X$ is a singular surface. Moreover, in order to apply the formulas for $\nu_k(X)$, $k\geq3$, one needs to know how many lines are contained in $X$.  
In this paper we present a combinatorial framework for the study of the $k$-secant varieties to any projective toric surface, that makes the computation of $\nu_2$ and $\nu_3$ easier.


\subsection{The $k$-secant degree of toric surfaces with $g\leq1$}\label{secant degree}
In this section we will deal with the toric surfaces defined by the polytopes of Theorem \ref{census0} and Theorem \ref{census1}. They are all minimal $k$-secant degree surfaces, $\mathcal{M}^k$-surfaces (see \cite{CR}), i.e. $$\deg(\Sec_k(X))={{r-\dim(\Sec_k(X))+k}\choose{k}}.$$

\subsubsection{$g=0$}

The Veronese surface $V_2$ in $\p^5$ is described by the triangle $P^0(3,4,2)$. Its $2$-secant variety is a hypersurface of degree $3$. Moreover $\Sec_k(V_2) =\p^5$, $k\ge3$. 

Consider the  rational normal surface scroll $S=S(\delta_1,\delta_2)\subseteq\p^{\delta_1+\delta_2+1}$,  $\delta_1\leq\delta_2$, whose polytope is either the triangle $P^0(3,\delta_2,\delta_2)$ or the trapezium $P^0(4,\delta_1+\delta_2,\delta_2)$.
If $k\leq \delta_1$ and $3k-1\leq \delta_1+\delta_2+1$ then $S$ is non $k$-defective and has minimal $k$-secant degree, namely 
$
\deg(\Sec_k(S))={{\delta-2k+2}\choose{k}}
$
and $\mu_k(S)=1$, $k\geq 2$.
The ideal of these surfaces is generated by the $2\times2$-minors of a Hankel matrix. A determinantal presentation for the ideals of their $k$-secant varieties is known, see \cite[Prop. 2.2]{CJ}.

\subsubsection{$g=1$}

The $k$-secant varieties of the  three quartic toric surfaces in $\p^4$ defined by $P^1(3,4,2)$, $P^1(4,4,1)$ and $\tilde{P}^1(4,4,1)$  fill up $\p^4$, for each $k\geq2$. 

Let $V_3$ be the $3$-ple Veronese embedding of $\p^2$ in $\p^9$, described by the polytope $P^1(3,9,3)$. It is well known that it is non $k$-defective and is minimal $k$-secant degree for $k=2,3$.  
In particular $\Sec_2(V_3)$ has dimension $5$ and degree $15$, while $\Sec_3(V_3)$ has dimension $8$ and degree $4$. Moreover $\Sec_k(V_3)=\p^9$, $k\geq 4$.

The $i$-internal projections of $V_3$, i.e., the surfaces obtained from $V_3$ as projections from $i$ general points on it, $1\leq i\leq4$, are  del Pezzo surfaces of degree $9-i$ in $\p^{9-i}$. They are the ones defined by the subpolytopes of $P^1(3,9,3)$: $P^1(4,8,3)$, $P^1(4,7,3)$, $P^1(5,7,2)$, $P^1(3,6,3)$, $P^1(4,6,2)$, $P^1(5,6,2)$, $P^1(6,6,1)$, $P^1(4,6,2)$, $P^1(5,5,1)$.
For $k=2$, we have $\dim(\Sec_2(X))=5$ and $\nu_2(X)={{d-3}\choose2}$. For $k\geq3$, $\Sec_3(X)=\p^{9-i}$. In particular for the del Pezzo surface of degree $8$ in $\p^8$, that corresponds to $P^1(4,8,3)$, we have $\nu_3(X)=1$. 
All of them have ideals which are generated by quadrics and given by the $2\times2$ minors of a known matrix. Also the $k$-secant varieties, for $k=2,3$,  have a nice determinantal presentation: the equations are given by the $(k+1)\times(k+1)$ minors of the same matrix. For an overview see \cite{CGG,tesi}.

Let now $X,Y\subseteq\p^8$ be respectively the embedding of the smooth quadric $\p^1\times\p^1\subseteq\p^3$  and of the cone in $\p^3$ over a rational normal conic via the $2$-Veronese embedding. They correspond to $P^1(4,8,2)$ and $P^1(3,8,4)$ respectively. They both have $2$-secant variety of dimension $5$ and degree $10$. Moreover for both of them, the  $3$-secant variety has dimension $7$ and degree $4$, see \cite[Theor. 9.1]{CR}.


\section{$k$-delightful planar toric degenerations}\label{delightfulness}

Let $\mathcal{I}$ be an ideal in the polynomial ring $K[x_0,\dots,x_r]$. The \emph{secant} $\mathcal{I}^{\{2\}}=\mathcal{I}\ast\mathcal{I}$ of $\mathcal{I}$  is an ideal in  $K[x_0,\dots,x_r]$ defined in the following way: take the polynomial ring $K[\underline{x},\underline{y},\underline{z}]=K[x_0,\dots,x_r,y_0,\dots,y_r,z_0,\dots,z_r]$ and let $\mathcal{I}(\underline{y})$ and $\mathcal{I}(\underline{z})$ be the ideals obtained as images of $\mathcal{I}$ in $K[\underline{x},\underline{y},\underline{z}]$ via the maps $x_i\mapsto y_i$ and $x_i\mapsto z_i$,
for $i=0,\dots,r$. 
Then $\mathcal{I}^{\{2\}}$ is the elimination ideal $\left(\mathcal{I}(\underline{y})+\mathcal{I}(\underline{z})+\left\langle y_i+z_i-x_i:0\leq i\leq r\right\rangle\right)\cap K[x_0,\dots,x_r]$.
Similarly, we define the $k$-\emph{secant} of $\mathcal{I}$ as $\mathcal{I}^{\{k\}}=\mathcal{I}\ast\cdots\ast\mathcal{I}$.

For homogeneous prime ideals, the $k$-secant ideals represent the prime ideals of the $k$-secant varieties of irreducible projective varieties.

Let now $\prec$ be any term order. The initial ideal of the $k$-secant ideal $\mathcal{I}^{\{k\}}$ of $\mathcal{I}$ is contained in the  $k$-secant of the initial ideal  of $\mathcal{I}$, for $k\geq 1$:
\begin{eqnarray}\label{delightfulness SS}
\textrm{in}_{\prec}(\mathcal{I}^{\{k\}})\subseteq (\textrm{in}_{\prec}(\mathcal{I}))^{\{k\}}.
\end{eqnarray}
For a  reference see  \cite[Cor. 4.2]{SS}.
If equality holds in (\ref{delightfulness SS}), then $\prec$ is said to be $k$-\emph{delightful} for the ideal $\mathcal{I}$. It is said to be \emph{delightful} for $\mathcal{I}$ if it is $k$-delightful for $\mathcal{I}$, for every $k\geq1$.

For toric varieties this leads to the notion of delightful triangulations of polytopes.
Let $\pi_D$ be a toric degeneration of a toric variety $X$ of dimension $n$ to a union of $\p^n$'s.
Any subset of $D$ of $k$ pairwise skew $\p^n$'s, i.e. $k(n+1)$ vertices of $D$ such that they form the vertices of $k$ disjoint tetrahedra of $D$, $k\geq1$, will span a linear subspace of $\p^r$ of dimension $kn+k-1$. A subset  of this type is said to be a \emph{skew} $k$-\emph{set};  we denote by $N_k(D)$ the set of such skew $k$-sets and by  $\bar{\nu}_k(D)$ its cardinality, see \cite{CDM,SS}. Consider the following result, due to Sturmfels and Sullivant, which gives a lower bound to the number $\nu_k(X)$ for toric varieties.

\begin{thm}\cite[Theor. 5.4]{SS}
\label{old lower bound}
If there exists a toric  degeneration $\pi_D$ of $X$ to a union of $\p^n$'s for which there exists at least one skew $k$-set, then $\textrm{Sec}_k(X)$ has the expected dimension and $\nu_k(X)$ is bounded below by the number of skew $k$-sets:
\begin{equation}\label{formula delightful}
\nu_k(X)\geq \bar{\nu}_{k}(D).
\end{equation}
\end{thm}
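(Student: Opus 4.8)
The plan is to run a degeneration argument: transport the count of $k$-secant $\p^{k-1}$'s along the flat family $\pi_D$ and bound it below by the geometry visible on the central fibre. Write $\mathcal{X}\to T$, $T=\C$, for the flat family determined by $\pi_D$, with general fibre $X_t$ projectively equivalent to $X$ and special fibre $X_0=\bigcup_i X_{Q_i}$. Because $D$ is a regular \emph{unimodular} triangulation, each simplex $Q_i$ has exactly its $n+1$ vertices as lattice points, so each component $X_{Q_i}$ is the coordinate $\p^n\subseteq\p^r$ spanned by the basis vectors indexed by the vertices of $Q_i$. Consequently, for a skew $k$-set $\sigma=\{Q_{i_1},\dots,Q_{i_k}\}\in N_k(D)$ the $k(n+1)$ vertices involved are pairwise distinct, the span $\Lambda_\sigma:=\langle X_{Q_{i_1}},\dots,X_{Q_{i_k}}\rangle$ is a coordinate linear subspace of dimension exactly $kn+k-1$, and the mere existence of $\sigma$ forces $k(n+1)\leq r+1$, so that $kn+k-1\leq r$ and the expected dimension equals $kn+k-1$.

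First I would form the flat limit $W_0:=\lim_{t\to0}\Sec_k(X_t)$ as the fibre over $0$ of the relative secant variety $\overline{\{(z,t):t\neq0,\ z\in\Sec_k(X_t)\}}\subseteq\p^r\times T$; being a flat limit it satisfies $\dim W_0=\dim\Sec_k(X)$ and $\deg W_0=\deg\Sec_k(X)$, and set-theoretically $W_0\subseteq\Sec_k(X_0)$. The key geometric input is that $\Lambda_\sigma\subseteq W_0$ for every $\sigma\in N_k(D)$. To see this, note that since the total space $\mathcal{X}$ is irreducible and $X_0=\mathcal{X}\cap(\p^r\times\{0\})$, every point of $X_0$ is a limit of points of the $X_t$. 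Hence, given a general $z=\sum_j\lambda_j p_j\in\Lambda_\sigma$ with $p_j\in X_{Q_{i_j}}$, I would choose arcs $p_j(t)\in X_t$ with $p_j(t)\to p_j$; for small $t\neq0$ the points $p_1(t),\dots,p_k(t)$ are independent, so $z(t):=\sum_j\lambda_j p_j(t)\in\Sec_k(X_t)$, and $z(t)\to z$. Thus $\Lambda_\sigma\subseteq W_0$, and the dimension statement follows at once: $\dim\Sec_k(X)=\dim W_0\geq\dim\Lambda_\sigma=kn+k-1$, so by the remark above $\Sec_k(X)$ has the expected dimension.

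For the lower bound on $\nu_k(X)$ I would fix a general $L$ of codimension $kn+k-1$ and count limit $k$-secants. Because $\Lambda_\sigma$ has dimension equal to $\operatorname{codim}L$, the intersection $L\cap\Lambda_\sigma$ is a single point $z_\sigma$; as $X_{Q_{i_1}},\dots,X_{Q_{i_k}}$ sit in a coordinate direct sum, $z_\sigma$ determines unique points $p_j\in X_{Q_{i_j}}$ and hence a unique $k$-secant $\p^{k-1}=\langle p_1,\dots,p_k\rangle$ of $X_0$ through $z_\sigma$ meeting $L$. Distinct skew $k$-sets produce distinct such secant $\p^{k-1}$'s of $X_0$: even two sets spanning the same $\Lambda$ group the vertices into different simplices and so yield different spanning $k$-tuples, hence different spans. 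Thus $X_0$ carries at least $\bar{\nu}_k(D)$ limit secants meeting $L$. Reading off $\nu_k=\mu_k\cdot\deg\Sec_k(X)$ then accounts for all of them — distinct points $z_\sigma$ contribute to $\deg W_0=\deg\Sec_k(X)$, while several secants through a repeated $z_\sigma$ contribute to $\mu_k$ — so conservation of number in the flat family gives $\nu_k(X)\geq\bar{\nu}_k(D)$.

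The main obstacle is making this final count rigorous, i.e. upgrading ``$X_0$ carries at least $\bar\nu_k(D)$ limit secants'' into a genuine lower bound for the general fibre. The delicate points are that each of these $\bar\nu_k(D)$ configurations is an honest, reduced limit of a single $k$-secant $\p^{k-1}$ of $X_t$ — no two collapse onto one another, and none is spurious, i.e.\ lying only in a component of $\Sec_k(X_0)\setminus W_0$ — and that the relative abstract secant has central fibre of the expected pure dimension $kn+k-1$, so that the fibrewise incidence with $L$ varies semicontinuously. Verifying that the $\bar\nu_k(D)$ points $z_\sigma\in L$ and their associated secants are distinct and appear with multiplicity at least one in the limit is the crux; the coordinate (toric) structure of the $X_{Q_i}$, which realises every $\Lambda_\sigma$ as a distinguished coordinate subspace, is precisely what keeps this bookkeeping tractable.
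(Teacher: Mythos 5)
Your argument is correct and reaches the same conclusion, but it routes the key containment differently from the paper. The paper's proof is ideal-theoretic: it identifies the flat limit of $\Sec_k(X)$ with the scheme cut out by $\textrm{in}_{\prec}(\mathcal{I}^{\{k\}})$ and invokes the Sturmfels--Sullivant inclusion $\textrm{in}_{\prec}(\mathcal{I}^{\{k\}})\subseteq (\textrm{in}_{\prec}(\mathcal{I}))^{\{k\}}$ of (\ref{delightfulness SS}) (\cite[Cor.~4.2]{SS}), together with the combinatorial description of the simplicial complex $D^{\{k\}}$ from \cite[Remark~2.9]{SS}, to conclude that the reduced union of the coordinate subspaces $\Lambda_\sigma$, $\sigma\in N_k(D)$, sits inside that limit. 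You instead prove the containment $\Lambda_\sigma\subseteq W_0$ by hand, choosing arcs $p_j(t)\to p_j$ in the total space of the family and taking limits of the resulting secant points; this is a legitimate, more self-contained geometric substitute that avoids the secant-ideal machinery, at the cost of having to check separately that a general point of $\Lambda_\sigma$ decomposes into points of the $X_{Q_{i_j}}$ and that linear independence persists for small $t$ (both of which you do, using unimodularity). The two arguments then coincide on the dimension statement and on the final count, and they share the same soft spot: when several skew $k$-sets span the same subspace $\pi$, the excess must be absorbed into the secant order $\mu_k$ rather than into $\deg\Sec_k(X)$, and neither you nor the paper carries out the semicontinuity/conservation-of-number bookkeeping in full — the paper disposes of it with one sentence about the unique $\p^{k-1}$ through a general point of $\pi$ for each skew $k$-set spanning it, whereas you explicitly flag it as the remaining crux. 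Since the theorem is quoted from \cite[Theor.~5.4]{SS} and the paper's own proof is a sketch at precisely this point, your proposal is at least as complete as the argument it replaces.
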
  
\begin{proof}
Notice first of all that  $kn+k-1\leq r$. Let $\mathcal{I}$ be the ideal of $X$ and let $\mathcal{I}_0$ be the ideal of the central fiber $X_0$ with respect to the toric degeneration $\pi_D$. The simplicial complex of $X_0$ is  $D$; let $D^{\{k\}}$ be the simplicial complex of $\mathcal{I}_0^{\{k\}}$: the simplices in $D^{\{k\}}$ are the unions of $k$ simplices in $D$, see \cite[Remark 2.9]{SS}. Notice that the simplices of $D^{\{k\}}$ of maximal dimension  are the skew $k$-sets and the subspaces they span sit in the flat limit of $\textrm{Sec}_k(X)$.
Therefore, if there exists at least one skew $k$-set in  $D$, then $\textrm{Sec}_k(X)$ has the expected dimension $kn+k-1$.

Notice that different skew $k$-sets could span the same subspace $\pi$ of $\p^r$ and that for the general point of $\pi$ there is a unique subspace of dimension $k-1$ meeting the $k$ planes each in a point, for each skew $k$-set spanning $\pi$. The toric variety described by $D^{\{k\}}$ is the reduced union of the coordinate subspaces in $\p^r$ given by the skew $k$-sets.
 Furthermore, the limit of the $k$-secant variety of $X$ contains the variety defined by the $k$-secant of $\mathcal{I}_0$ by (\ref{delightfulness SS}). This concludes the proof.
\end{proof}

Sturmfels and Sullivant in \cite{SS} conjectured that if equality holds in the lower bound in (\ref{formula delightful}), then the term order corresponding to the triangulation $D$ is  $k$-delightful. We will call such degenerations $k$-delightful, according to \cite{CDM}.

\begin{defi}
Let $P$ and $D$ be as above. If $\dim(\Sec_k(X_P))=kn+k-1\leq r$ and equality holds in (\ref{formula delightful}), then $D$ is said to be $k$-delightful. Moreover $D$ is said to be delightful if it is $k$-delightful for every $k$.
\end{defi}

Now, consider the examples in Figure \ref{esempi non-del}.
\begin{figure}[!h]
\setlength{\unitlength}{0.422222mm}$D$
\begin{picture}(30,30)(0,0)
\put(15,0){\line(1,0){15}}
\put(0,30){\line(1,0){15}}
\put(0,15){\line(0,1){15}}
\put(30,0){\line(0,1){15}}
\put(0,15){\line(1,-1){15}}
\put(15,30){\line(1,-1){15}}
\put(15,0){\line(0,1){30}}
\put(0,15){\line(1,0){30}}
\put(30,0){\line(-1,1){30}}
\put(13,13){$\bullet$}
\end{picture}\ \ \ \ \ \ \ \ \ $D'$
\begin{picture}(45,40)(0,0)
\put(0,0){\line(1,0){45}}
\put(0,15){\line(1,0){30}}
\put(0,30){\line(1,0){15}}
\put(0,0){\line(0,1){45}}
\put(15,0){\line(0,1){30}}
\put(30,0){\line(0,1){15}}
\put(0,45){\line(1,-1){45}}
\put(0,30){\line(1,-1){30}}
\put(0,15){\line(1,-1){15}}
\put(13,13){$\bullet$}
\end{picture}\caption{Non-$2$-delightful triangulations}\label{esempi non-del}
\end{figure}
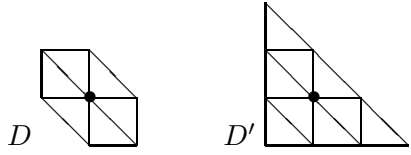
The first picture represents a triangulation $D$ of the hexagon $P^1(6,6,1)$, i.e., a degeneration of the smooth del Pezzo surface $X\subseteq\p^6$ to a union of six planes intersecting at a point. Since  $\bar{\nu}_2(D)=0$ and $\nu_2(X)=3$, $D$ is not $2$-delightful. The second one represents a triangulation  of the polytope $P^1(3,9,3)$ defining the Veronese surface $X'$ in $\p^9$. $\bar{\nu}_2(D')=12$ and $\nu_2(X')=15$ hence $D'$ is not $2$-delightful. 
Notice that in both cases there is a $2$-delightfulness defect equal to $3$. It is natural to wonder if the cause has to be sought in the sextuple central point, marked in the figures, that of course prevents the presence of disjoint triangles in the configurations. More generally, how do the singularities of the configuration influence the delightfulness property? This question was asked by Ciliberto, Dumitrescu and Miranda \cite{CDM}.
Our aim is to give an explanation of this phenomenon. In the next section we will propose our results in this direction.


\section{A lower bound for $\nu_k$, $k=2,3$}\label{results secant}

Let $P\subseteq\R^2$ be the defining polytope of a projective toric  surface $X$ and let $\pi_D$ be a (planar) toric degeneration of $X$ to a union of planes $X_0$. Let $p\in P\cap\Z^n$ be a lattice point of $P$ and let  $Q^1,\dots,Q^\delta \in D$ be the triangles in $D$ covering $p$: $Q^1\cap \cdots \cap Q^\delta=\{p\}$. Suppose that the union of the $Q^i$'s is a convex planar figure, namely a sub-polytope $Q_p$ of $P$. $Q_p$ has (normalized) area
$\delta.$
Let $Z=Z_p$ be the projective toric surface of degree $\delta$ defined by $Q_p$ and let $Z_{0}$ be the union of $\delta$ planes defined by the $Q^i$'s. 
If $p$ is a boundary lattice point, i.e. $Q_p$ has $g=0$,  we will call it a \emph{rational singularity} for $D$ because  $Z_{0}$  is a reduced chain of planes intersecting at a point (corresponding to $p$). If $p$ is an interior point, i.e. $Q_p$ has $g=1$, we will say that $p$ is an \emph{elliptic singularity} for $D$ since  the general hyperplane section of $Z_{0}$ is a cycle of lines. In Table \ref{caso razionale sec 1} and  Table \ref{caso ellittico sec 1} all these singularities are classified.

This section is devoted to the proof of the following result that improves the lower bound for $\nu_k$ of Proposition \ref{old lower bound} for the case $n=2$, $k=2,3$.

\begin{thm}\label{general sum}
Let $k\in\{2,3\}$. Let $X=X_P$ be a projective toric surface such that $\dim(\textrm{Sec}_k(X))=3k-1$.
Let $D$ be any triangulation of $P$. Let $\{p_i\}_{i \in I}\subseteq P\cap\Z^n$, $\{Q_{p_i}\}_{i \in I}$ and $\{Z_{p_i}\}_{i \in I}$ be as above. Assume that
\begin{enumerate}
\item $\dim\textrm{Sec}_k(Z_{p_i})=3k-1$, for $i\in I$,
\item there exists a regular subdivision $D^1_i$ of $P$ containing  $Q_{p_i}$. 
\end{enumerate}
Then $D$ is not $k$-delightful. Moreover
\begin{eqnarray}\label{senza overlap}
\nu_k(X)\geq \bar{\nu}_{k}(D)+ \sum_{i \in I} \nu_k(Z_{p_i}).
\end{eqnarray}
\end{thm}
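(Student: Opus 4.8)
The plan is to work throughout with the description of $\nu_k(X)=\mu_k(X)\deg\Sec_k(X)$ as the number of $k$-secant $\p^{k-1}$'s meeting a fixed general linear subspace $L\subseteq\p^r$ of codimension $3k-1$, and to exploit the lower semicontinuity of this count under flat toric degenerations preserving $\dim\Sec_k=3k-1$. For any such degeneration the relative secant scheme is flat, so $\deg\Sec_k$ is constant and every $k$-secant $\p^{k-1}$ meeting $L$ in the special fibre is a limit of such in the general fibre; hence $\nu_k(X)$ is at least the number of \emph{distinct} $k$-secant $\p^{k-1}$'s meeting $L$ that can be exhibited inside the flat limit $\lim_D\Sec_k(X)=V(\textrm{in}_\prec(\mathcal{I}^{\{k\}}))$. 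I would produce two families of such limit secant spaces and then show they are pairwise distinct, so that their counts add.

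First, the skew-set family. Applying $\pi_D$, the central fibre $X_0$ is a union of planes, and the inclusion (\ref{delightfulness SS}) gives $V(\mathcal{I}_0^{\{k\}})\subseteq\lim_D\Sec_k(X)$; exactly as in the proof of Theorem \ref{old lower bound}, the top-dimensional components of $V(\mathcal{I}_0^{\{k\}})$ are the spans $\langle\sigma\rangle\cong\p^{3k-1}$ of the skew $k$-sets $\sigma$. Each such $\p^{3k-1}$ meets the general $L$ in a single point, through which there is a unique $\p^{k-1}$ meeting the $k$ disjoint planes of $\sigma$; this yields $\bar\nu_k(D)$ distinct secant $\p^{k-1}$'s.

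Second, the singular family. Here I would use assumption (2) in the form that the subdivision $D^1_i$, obtained from $D$ by amalgamating the $\delta_i$ triangles covering $p_i$ into the single cell $Q_{p_i}$, is regular; since $D$ refines $D^1_i$, regularity lets me realise $\pi_D$ as an iterated degeneration, first to $\lim_{D^1_i}X$ and then refining the cell $Q_{p_i}$ according to $D|_{Q_{p_i}}$, the flat limit of $\Sec_k$ along the composite being $\lim_D\Sec_k(X)$. Along $\pi_{D^1_i}$ the surface $Z_{p_i}=X_{Q_{p_i}}$ appears as a component of the central fibre inside the coordinate subspace $\Pi_i=\langle\,\underline{m}_j:\underline{m}_j\in Q_{p_i}\,\rangle$, so by assumption (1) the variety $\Sec_k(Z_{p_i})$, of dimension $3k-1$, lies in $\lim_{D^1_i}\Sec_k(X)$; refining $Q_{p_i}$ further, its flat limit $W_i:=\lim_{D|_{Q_{p_i}}}\Sec_k(Z_{p_i})\subseteq\Pi_i$ therefore lies in $\lim_D\Sec_k(X)$ as well. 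Since $L\cap\Pi_i$ is a general codimension-$3k-1$ subspace of $\Pi_i$, the scheme $W_i$ contributes precisely $\nu_k(Z_{p_i})\ge1$ secant $\p^{k-1}$'s meeting $L$.

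Finally, additivity. Each skew-set span is generated by $k$ pairwise disjoint triangles, of which at most one can lie in a given $Q_{p_i}$ (all triangles of $Q_{p_i}$ share $p_i$); such a span therefore uses lattice points outside $Q_{p_i}$ and is not contained in $\Pi_i$, so no skew-set secant $\p^{k-1}$ coincides with one arising from some $W_i$. For $i\neq j$, the hypothesis that the $Q_{p_i}$ are pairwise disjoint (the case recorded in (\ref{senza overlap})) makes $\Pi_i\cap\Pi_j$ the span of their common boundary, of small dimension, which a general secant $\p^{k-1}\subseteq\Pi_i$ avoids; hence the $W_i$-contributions are mutually distinct. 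Counting distinct secant $\p^{k-1}$'s meeting $L$ inside $\lim_D\Sec_k(X)$ then yields $\nu_k(X)\ge\bar\nu_k(D)+\sum_{i\in I}\nu_k(Z_{p_i})$, and since each $\nu_k(Z_{p_i})\ge1$ with $I\neq\emptyset$ the bound is strict over $\bar\nu_k(D)$, so $D$ is not $k$-delightful. I expect the main obstacle to be the rigorous justification of the factored degeneration, namely that $W_i$, which records the \emph{partially} degenerate surface $Z_{p_i}$, genuinely lies in the \emph{same} limit $\lim_D\Sec_k(X)$ that already contains the fully degenerate $V(\mathcal{I}_0^{\{k\}})$ — this is exactly where the regularity of the merged subdivision in assumption (2) must be used — together with the transversality ensuring the two families cut $L$ in disjoint sets of secant spaces.
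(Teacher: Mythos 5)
Your proposal is correct and follows essentially the same route as the paper: factor $\pi_D$ through the intermediate regular subdivision $D^1_i$ so that $\Sec_k(Z_{p_i})$ appears in the intermediate flat limit and its further degeneration $W_i\subseteq\Pi_i$ survives into $\lim_D\Sec_k(X)$, add the skew-$k$-set contribution from Theorem \ref{old lower bound}, and separate all contributions by observing that the skew-set spans are not contained in any $\Pi_i$ and that $\dim(\Pi_i\cap\Pi_j)$ is too small to carry a common $(3k-1)$-dimensional component. The only slip is your appeal to a ``hypothesis that the $Q_{p_i}$ are pairwise disjoint,'' which the theorem does not assume; the paper instead notes that at most two triangles can have both $p_i$ and $p_j$ as vertices, so $\dim(\Pi_i\cap\Pi_j)\leq 3$, which is exactly the low-dimensionality your argument actually uses.
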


\begin{rmk} This result can not be generalized  to the higher-order secant case. 
Let $k\ge4$. The expected dimension of $\textrm{Sec}_k(X)$ is $\min\{3k-1,r\}$, when $X\subseteq\p^r$ is a projective toric surface. 
None of the rational or elliptic sub-polytopes is interesting in this case, because $\dim(\textrm{Sec}_k(Z_p))<\dim(\textrm{Sec}_k(X))$, for any $Z_p$ as in Table \ref{caso razionale sec 1} or  Table \ref{caso ellittico sec 1}.
\end{rmk}

\subsection{Proof of Theorem \ref{general sum}}

\subsubsection{$k=2$}

Let $X=X_P$ be a projective toric surface such that $\dim\textrm{Sec}(X)=5$. Let $\pi_D$ be a planar toric degeneration of $X$  and let $p$ be a rational or elliptic singularity for $D$. Let $Q=Q_p
=P^0(l,\delta,m)$ be the sub-polytope of $P$ corresponding to $p$ and let $Z=Z_p$ be the projective toric surface of degree $\delta$ defined by $Q$: $Z\subseteq \p^{\delta'}\subseteq \p^r$, where
$$
\delta'=\left\{\begin{array}{ll}
\delta+1 & \textrm{if } p \textrm{ is rational}\\
\delta & \textrm{if } p \textrm{ is elliptic.}\\
\end{array}\right.
$$ 

We are going to  prove that the flat limit of the secant variety of $X$ has a $5$-dimensional component of degree $\nu_2(Z)$.  For this reason, we assume that $\delta'\geq 5$ so that $\dim(\Sec_2(Z_p))=5$ (cf. Section \ref{secant degree}).
Furthermore we assume that a lifting function $F_{D^1}$ over an intermediate partition $D^1$ of $P$, that contains $Q$ and other polytopes obtained as union of triangles of $D$, 
 exists. We propose a couple of examples in  Figure  \ref{decomposed deg lati=1} and in  Figure \ref{decomposed deg razio}. The  existence of such an $F_{D^1}$ will be discussed  in Subsection \ref{existence D^1}.
$D^1$ defines a degeneration $\pi_{D^1}$ of $X$ to a reducible surface that has $Z$ as component.
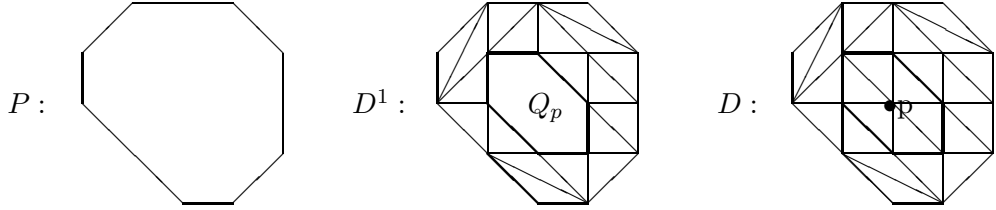
\begin{figure}[h!]
\begin{center}\ \ \ \ 
\setlength{\unitlength}{0.666666mm} 
\begin{picture}(40,40)(0,0)
\put(20,0){\line(1,0){10}}\put(0,20){\line(1,-1){20}}\put(0,20){\line(0,1){10}}\put(0,30){\line(1,1){10}}\put(10,40){\line(1,0){20}}\put(30,40){\line(1,-1){10}}\put(40,10){\line(0,1){20}}\put(30,0){\line(1,1){10}}\put(-15,18){$P:$}
\end{picture}  \ \ \ \ \ \ \ \ \ \ \ \ \ \ 
\setlength{\unitlength}{0.666666mm} 
\begin{picture}(40,40)(0,0)
\put(20,0){\line(1,0){10}}\put(0,20){\line(1,-1){20}}\put(0,20){\line(0,1){10}}\put(0,30){\line(1,1){10}}\put(10,40){\line(1,0){20}}\put(30,40){\line(1,-1){10}}\put(40,10){\line(0,1){20}}\put(30,0){\line(1,1){10}}\put(20,30){\line(1,-1){10}}\put(20,10){\line(1,0){10}}\put(10,30){\line(1,0){10}}\put(10,20){\line(0,1){10}}\put(30,10){\line(0,1){10}}\put(10,10){\line(1,0){10}}\put(30,10){\line(1,0){10}}\put(30,20){\line(1,0){10}}\put(20,30)
{\line(1,0){10}}\put(0,20){\line(1,1){10}}\put(0,20){\line(1,2){10}}\put(30,0){\line(0,1){10}}\put(30,20){\line(0,1){10}}\put(20,30){\line(0,1){10}}\put(10,10){\line(0,1){10}}\put(30,20){\line(1,-1){10}}\put(30,30){\line(1,-1){10}}\put(30,30){\line(1,0){10}}\put(30,30){\line(-1,1){10}}\put(20,40){\line(2,-1){20}}\put(10,30){\line(0,1){10}}\put(10,30){\line(1,1){10}}\put(0,20){\line(1,0){10}}\put(10,10){\line(2,-1){20}}\put(10,20){\line(1,-1){20}}
\thicklines\put(20,10){\line(1,0){10}}\put(10,20){\line(1,-1){10}}\put(10,30){\line(0,-1){10}}\put(10,30){\line(1,0){10}}\put(20,30){\line(1,-1){10}}\put(30,10){\line(0,1){10}}\put(-17,18){$D^1:$}\put(18,18){$Q_p$}
\put(10,10){\line(1,-1){10}}
\end{picture} \ \ \ \ \ \ \ \ \ \ \ \ \ \ 
\setlength{\unitlength}{0.666666mm}
\begin{picture}(40,40)(0,0)
\put(20,0){\line(1,0){10}}\put(0,20){\line(1,-1){20}}\put(0,20){\line(0,1){10}}\put(0,30){\line(1,1){10}}\put(10,40){\line(1,0){20}}\put(30,40){\line(1,-1){10}}\put(40,10){\line(0,1){20}}\put(30,0){\line(1,1){10}}\put(20,30){\line(1,-1){10}}\put(20,10){\line(1,0){10}}\put(10,30){\line(1,0){10}}\put(10,20){\line(0,1){10}}\put(30,10){\line(0,1){10}}\put(10,10){\line(1,0){10}}\put(30,10){\line(1,0){10}}\put(30,20){\line(1,0){10}}\put(20,30)
{\line(1,0){10}}\put(0,20){\line(1,1){10}}\put(0,20){\line(1,2){10}}\put(30,0){\line(0,1){10}}\put(30,20){\line(0,1){10}}\put(20,30){\line(0,1){10}}\put(10,10){\line(0,1){10}}\put(30,20){\line(1,-1){10}}\put(30,30){\line(1,-1){10}}\put(30,30){\line(1,0){10}}\put(30,30){\line(-1,1){10}}\put(20,40){\line(2,-1){20}}\put(10,30){\line(0,1){10}}\put(10,30){\line(1,1){10}}\put(0,20){\line(1,0){10}}\put(10,10){\line(2,-1){20}}\put(10,20){\line(1,-1){20}}\put(10,10){\line(1,-1){10}}\put(18,18){$\bullet$p}
\put(10,20){\line(1,0){20}}\put(20,10){\line(0,1){20}}\put(10,30){\line(1,-1){20}}
\thicklines\put(20,10){\line(1,0){10}}\put(10,20){\line(1,-1){10}}\put(10,30){\line(0,-1){10}}\put(10,30){\line(1,0){10}}\put(20,30){\line(1,-1){10}}\put(30,10){\line(0,1){10}}\put(-15,18){$D:$}
\end{picture}\end{center}
\caption{An example of decomposed degeneration, $Q_p=P^1(6,6,1)$.}\label{decomposed deg lati=1} 
\end{figure}

\begin{figure}[!h]\begin{center}\ \ \ \ 
\setlength{\unitlength}{0.666666mm}
\begin{picture}(40,40)(0,0)
\put(10,0){\line(1,0){20}}\put(0,10){\line(1,-1){10}}\put(0,10){\line(0,1){20}}\put(0,30){\line(1,1){10}}\put(10,40){\line(1,0){20}}\put(30,40){\line(1,-1){10}}\put(40,10){\line(0,1){20}}\put(30,0){\line(1,1){10}} \put(-15,18){$P:$}
\end{picture} \ \ \ \ \ \ \ \ \ \ \ \ \ \ 
\setlength{\unitlength}{0.666666mm}
\begin{picture}(40,40)(0,0)
\put(10,0){\line(1,0){20}}\put(0,10){\line(1,-1){10}}\put(0,10){\line(0,1){20}}\put(0,30){\line(1,1){10}}\put(10,40){\line(1,0){20}}\put(30,40){\line(1,-1){10}}\put(40,10){\line(0,1){20}}\put(30,0){\line(1,1){10}}
\put(0,10){\line(1,0){40}}\put(0,20){\line(1,0){10}}\put(0,30){\line(1,0){30}}
\put(10,0){\line(0,1){40}}\put(30,30){\line(0,1){10}}\put(30,30){\line(1,-1){10}}
\put(0,20){\line(1,-1){10}}\put(0,30){\line(1,-1){10}}\put(10,20){\line(1,1){20}}\put(10,20){\line(2,1){20}}
\put(10,30){\line(1,1){10}}\put(10,30){\line(2,1){20}}\put(30,40){\line(1,-2){10}}\put(10,10){\line(1,1){20}}
\put(20,3){$Q_p$}\put(27,17){$S_1$}\put(10,17){$S_{1,1}$}
\thicklines\put(10,10){\line(1,0){30}}\put(10,0){\line(1,0){20}}\put(10,0){\line(0,1){10}}\put(30,0){\line(1,1){10}}
\put(-17,18){$D^1:$}
\end{picture} \ \ \ \ \ \ \ \ \ \ \ \ \ \ 
\setlength{\unitlength}{0.666666mm}
\begin{picture}(40,40)(0,0)
\put(10,0){\line(1,0){20}}\put(0,10){\line(1,-1){10}}\put(0,10){\line(0,1){20}}\put(0,30){\line(1,1){10}}\put(10,40){\line(1,0){20}}\put(30,40){\line(1,-1){10}}\put(40,10){\line(0,1){20}}\put(30,0){\line(1,1){10}}
\put(0,10){\line(1,0){40}}\put(0,20){\line(1,0){20}}\put(30,20){\line(1,0){10}}\put(0,30){\line(1,0){30}}\put(10,0){\line(0,1){40}}\put(20,0){\line(0,1){20}}\put(30,10){\line(0,1){30}}\put(0,20){\line(1,-1){20}}\put(0,30){\line(1,-1){10}}\put(20,0){\line(1,1){10}}\put(20,0){\line(2,1){20}}\put(30,20){\line(1,-1){10}}\put(30,20){\line(-1,-1){10}}\put(30,30){\line(1,-1){10}}\put(30,30){\line(-1,-1){10}}
\put(10,20){\line(1,1){20}}\put(10,20){\line(2,1){20}}\put(10,10){\line(1,1){10}}\put(20,10){\line(1,2){10}}
\put(10,30){\line(1,1){10}}\put(10,30){\line(2,1){20}}
\put(30,40){\line(1,-2){10}}
\thicklines\put(10,10){\line(1,0){30}}\put(10,0){\line(1,0){20}}\put(10,0){\line(0,1){10}}\put(30,0){\line(1,1){10}}
\put(18,-2){$\bullet$p}
\put(-15,18){$D:$}
\end{picture}\end{center}
\caption{An example of decomposed degeneration, $Q_p=P^0(4,5,3)$.}\label{decomposed deg razio}
\end{figure}
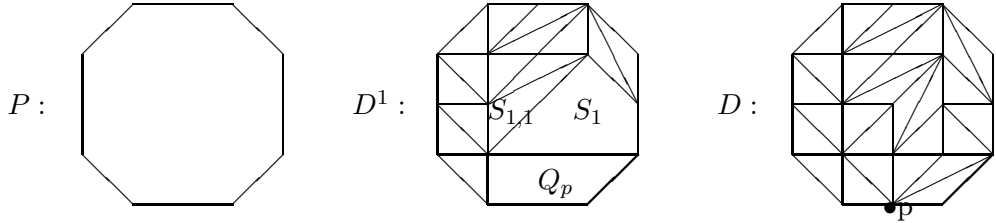

Let $\pi_{D^2}$ be the degeneration of the central fiber of $\pi_{D^1}$ to $X_0$.


\begin{prop}\label{new lower bound}
Keeping the same setting as above, if there exists in $D$ a singularity $p$ as in Table \ref{caso razionale sec 1}  or Table \ref{caso ellittico sec 1} and if there exists a regular subdivision $D^1$ of $P$ as above, then 
\begin{eqnarray}\label{our formula}
\nu_2(X)\geq \bar{\nu}_{2}(D)+ \nu_2(Z).
\end{eqnarray}
\end{prop}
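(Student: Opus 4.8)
The plan is to bound $\deg(\Sec_2(X))$ from below by reading off components of the flat limit $W:=\lim_D\Sec_2(X)$, exploiting that $Z=Z_p$ survives as an \emph{honest} component of an intermediate central fibre. Since flat degeneration preserves degree, $\deg(\Sec_2(X))=\deg(W)$, and $\nu_2(X)=\mu_2(X)\cdot\deg(\Sec_2(X))\geq\deg(\Sec_2(X))$. The surfaces listed in Table~\ref{caso razionale sec 1} and Table~\ref{caso ellittico sec 1} are not $2$-weakly defective (cf. Section~\ref{secant degree}), so $\mu_2(Z)=1$ and $\nu_2(Z)=\deg(\Sec_2(Z))$. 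It therefore suffices to produce inside $W$ two batches of $5$-dimensional components of degrees $\bar\nu_2(D)$ and $\nu_2(Z)$ that share no component.

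First I would factor $\pi_D$ as $\pi_{D^1}$ followed by $\pi_{D^2}$, using the regular subdivision $D^1\supseteq Q_p$ whose lifting function is constructed in Subsection~\ref{existence D^1}; compatibility of the two lifting functions gives $W=\lim_{D^2}\!\bigl(\lim_{D^1}\Sec_2(X)\bigr)$. Applying the containment (\ref{delightfulness SS}) to $\pi_{D^1}$ yields $\lim_{D^1}\Sec_2(X)\supseteq\Sec_2(X_1)\supseteq\Sec_2(Z)$, since $Z=X_{Q_p}$ is a component of the intermediate fibre $X_1$. Transporting this through $\pi_{D^2}$ produces a subscheme $W_Z:=\lim_{D^2}\Sec_2(Z)\subseteq W$ with $\deg(W_Z)=\deg(\Sec_2(Z))=\nu_2(Z)$, supported entirely on the coordinate subspace $\langle Q_p\rangle=\p^{\delta'}$ because $\pi_{D^2}$ only rescales toric coordinates and $\Sec_2(Z)\subseteq\p^{\delta'}$. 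On the other hand $W\supseteq\Sec_2(X_0)$, whose top-dimensional part is the union of the linear spaces spanned by the skew $2$-sets of $D$, contributing degree $\bar\nu_2(D)$ exactly as in Theorem~\ref{old lower bound}.

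The crux is that these two batches share no component, so that their degrees add. This is precisely where the geometry of the singularity enters: the span of a skew $2$-set is a $\p^5$ generated by six lattice points forming two \emph{disjoint} triangles, whereas every triangle of $D$ contained in $Q_p$ passes through $p$, so no two disjoint triangles of $D$ fit inside $Q_p$. Hence no skew $2$-set span lies in $\langle Q_p\rangle$, while every component of $W_Z$ does; the families are disjoint. Summing over distinct components of $W$ then gives
\[
\nu_2(X)\;\geq\;\deg(\Sec_2(X))\;=\;\deg(W)\;\geq\;\bar\nu_2(D)+\nu_2(Z),
\]
which is (\ref{our formula}); as $\nu_2(Z)\geq1$, this also shows $D$ fails equality in (\ref{formula delightful}), hence is not $2$-delightful.

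I expect the main obstacle to be making the component count honest. Two points need care: (i) that the secant contribution of $Z$, visible in $X_1$, persists as a degree-$\nu_2(Z)$ subscheme of the \emph{final} limit $W$ rather than collapsing under $\pi_{D^2}$ (note that $Z$ degenerates to a fan $Z_0$ of planes through $p$ which by itself carries no skew $2$-set), which rests on composing the two toric degenerations compatibly and hence on the regularity of $D^1$; and (ii) the secant-order bookkeeping ensuring the linear batch really accounts for $\bar\nu_2(D)$ when several skew $2$-sets happen to share a span. The clean containment $W_Z\subseteq\langle Q_p\rangle$ is what ultimately decouples the elliptic or rational contribution $\nu_2(Z)$ from the global skew $2$-sets.
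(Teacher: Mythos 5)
Your proposal follows essentially the same route as the paper: factor $\pi_D$ through the intermediate regular subdivision $D^1$ containing $Q_p$, observe that $\lim_{D^2}\Sec_2(Z)$ is a $5$-dimensional piece of degree $\nu_2(Z)$ of the final flat limit alongside the $\p^5$'s spanned by the skew $2$-sets, and conclude by noting the two batches share no component since no skew $2$-set span can lie in $\langle Q_p\rangle$ (all triangles of $D$ inside the convex $Q_p$ pass through $p$). Your explicit justification of the disjointness via containment in the coordinate subspace $\p^{\delta'}$ is a slightly more detailed version of the paper's one-line remark and matches the argument the paper later uses when handling several singularities at once.
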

\begin{proof}
Consider first the degeneration $D^1$ of $X$. Let $X^1_t$ be the fiber of $D^1$: $X^1_t\cong X$, for $t\neq 0$, while $X^1_0$ is the reduced union of the toric surfaces given by $D^1$. 
 We have that 
the secant variety of $Z$ and
 all the joins between components of  $X^1_0$
%
 sit in the flat limit  $\lim_{D^1}\textrm{Sec}(X)$ of the secant variety of $X$, with respect to $D^1$. 

 We consider now the second degeneration $D^2$ which has as general fiber $X^2_s\cong X^1_0$, $s\neq 0$, and as central fiber the reduced union of planes $X^2_0\cong X_0$. The flat limit, with respect to $D^2$, of $\lim_{D^1}\textrm{Sec}(X)$, that is $\lim_D\textrm{Sec}(X)$,   contains as component the flat limits, with respect to $D^2$, of all the components of $\lim_{D^1}\textrm{Sec}_2(X)$, namely  the following:
 $\lim_{D^2}\textrm{Sec}_2(Z)$, which is a $5$-dimensional component of degree $\nu_2(Z)$ and
the flat limit, with respect to $D^2$, of all the joins between components of $X^2_s$, $s\neq 0$. 
The union of these components contains the $\p^5$'s spanned by the elements of $N_2(D)$.

The contributions in terms of degree given by these components can be summed up.
 Indeed none of the $\p^5$'s spanned by the skew $2$-sets are contained in $\lim_{D^2}\textrm{Sec}_2(Z)$.
\end{proof}

If $\{p_i\}_{i \in I}$ are singularities of $D$ satisfying the hypotheses of Theorem \ref{new lower bound}, then the contributions given by $\nu_2(Z_{p_1})$'s do not interfere with each other. To see this, let us decompose the degeneration $D$ by taking subdivisions $D^1_i$ and $D^2_i$, for each $i$.  The flat limit of the secant variety of $Z_{p_i}$ with respect to $D^2_i$ sits in the flat limit of the secant variety of $X$ with respect to $D$, for every $i$, by Theorem \ref{new lower bound}. 
Furthermore, let $\p_i\subseteq\p^r$ be the projective subspace where $Z_{p_i}$, $\Sec_k(Z_{p_i})$ and their limits live, namely the space whose coordinate are given by the lattice points of $Q_{p_i}$. Notice that $\dim(\p_i\cap\p_j)\leq 3$, for all $i\neq j$. Indeed there are at most two coplanar triangles with vertices at two distinct points $p_i,p_j$. Since $(\lim_{D^2_i}\Sec_2(Z_{p_i}))\cap(\lim_{D^2_i}\Sec_2(Z_{p_i}))\subseteq \p_i\cap\p_j $, they have no common $5$-dimensional component. Therefore these limits are distinct components of $\lim_D\textrm{Sec}_2(X)$, for all $i,j \in I$, $i\neq j$. Furthermore all of them do not contain any element of $N_2(D)$,  hence the respective degrees sum up to $\bar{\nu}_2(D)$. This proves  Theorem \ref{general sum} for the case $k=2$.

\begin{ex}
Let $X$ be the quadric $\p^1\times\p^1$ embedded in $\p^{11}$ via $\mathcal{O}(2,3)$: $\nu_1(X)=\deg(\textrm{Sec}(X))=35$. Consider the two planar degenerations of $X$ shown in Figure \ref{ex2 sum}.
\begin{figure}[!h]
\begin{center}
\setlength{\unitlength}{0.666666mm} $D:$\ \  
\begin{picture}(30,20)(0,0)\put(0,0){\line(1,0){30}}\put(0,0){\line(0,1){20}}
\put(0,20){\line(1,0){30}}\put(0,10){\line(1,0){30}}
\put(10,0){\line(0,1){20}}\put(20,10){\line(0,1){10}}\put(0,10){\line(1,-1){10}}\put(0,20){\line(1,-1){10}}
\put(10,10){\line(1,1){10}}\put(10,0){\line(1,1){20}}
\put(10,0){\line(2,1){20}}\put(20,0){\line(1,1){10}}\put(30,0){\line(0,1){20}}
\put(8,8){$\bullet p_1$}\put(18,8){$\bullet p_2$}\put(8,-2){$\bullet p_3$}
\end{picture}\ \ \ \ \ \ $D':$ \ \ 
\begin{picture}(30,20)(0,0)\put(0,0){\line(1,0){30}}\put(0,0){\line(0,1){20}}
\put(0,20){\line(1,0){30}}\put(0,10){\line(1,0){30}}
\put(10,0){\line(0,1){20}}\put(20,10){\line(0,1){10}}\put(0,10){\line(1,-1){10}}\put(0,20){\line(1,-1){10}}
\put(10,20){\line(1,-1){10}}\put(10,0){\line(1,1){10}}\put(10,0){\line(2,1){20}}\put(20,0){\line(1,1){10}}
\put(20,20){\line(1,-1){10}}\put(30,0){\line(0,1){20}}
\put(8,8){$\bullet p'_2$}\put(18,8){$\bullet p'_3$}\put(8,-2){$\bullet p'_4$}\put(8,18){$\bullet p'_1$}\put(28,8){$\bullet q$}
\end{picture}
\caption{Triangulations of a rectangle}\label{ex2 sum}
\end{center}
\end{figure}
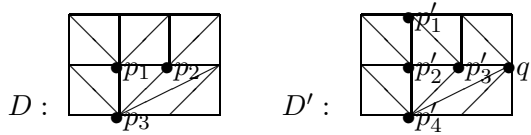
In the first case, the sum of the number of skew $2$-sets and of the contributions of the singularities restores the secant degree:
$
\bar{\nu}_2(D)+\nu_2(X_{p_1})+\nu_2(X_{p_2})+\nu_2(X_{p_3})=28+3+1+3=35.
$
In the second case we have:
$v_2(D')+\nu_1(X_{p'_1})+\nu_1(X_{p'_2})+\nu_1(X_{p'_3})+\nu_1(X_{p'_4}) =  29+1+1+1+1
 = 33<35$.
In $D'$ there is a lattice boundary point $q$ which is the common vertex of five triangles: certainly it causes an obstruction to the presence of skew $2$-sets, but the polygon given by the triangles around it is not convex and our argument does not apply.
\end{ex}

\subsubsection{$k=3$}

Let $X=X_P$ be a toric surface such that $\dim(\textrm{Sec}_2(X))=8$. Let $D$ be any triangulation of $P$.
\begin{rmk}
There are only two types of elliptic singularities we are interested in, namely the ones such that $Z_p$ is either the Veronese surface $V_3$ in $\p^9$ or the del Pezzo surface $X_8$ of degree eight in $\p^8$. Indeed in all remaining cases (see Table \ref{caso ellittico sec 1}) the $3$-secant variety has dimension less than $8$.
On the other hand, the only toric surface with $g=0$ such that  its $3$-secant variety has dimension $8$ and such that there exists a toric degeneration of it to a union of planes all of them  intersecting at a single point is the rational normal scroll $S(2,\delta-2)\subseteq\p^{\delta+1}$, with $\delta\geq7$, (see Table \ref{caso razionale sec 1}).
\end{rmk}

\begin{prop}\label{new lower bound 2}
Let $X=X_P$ be a toric surface such that $\dim\textrm{Sec}_3(X)=8$ and let $D$ be a triangulation of $P$. Let $p$ be a multiple point such that the corresponding surface $Z$ is either $V_3$, or $X_8$, or $S(2,\delta-2)$, with $\delta\geq 7$. Assume furthermore that  there exists an intermediate regular subdivision $D^1$ of $P$ containing  $Q_p$. Then
\begin{eqnarray}\label{our formula}
\nu_3(X)\geq \bar{\nu}_{3}(D)+ \nu_3(Z_p).
\end{eqnarray}
\end{prop}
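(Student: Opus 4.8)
The plan is to transcribe the two-stage degeneration argument of Proposition~\ref{new lower bound} from $k=2$ to $k=3$, the point of the hypotheses being that the three surfaces $V_3$, $X_8$ and $S(2,\delta-2)$ (with $\delta\ge7$) isolated in the preceding Remark are exactly the admissible $Z=Z_p$ for which $\dim\Sec_3(Z)=8$, i.e. for which the $3$-secant variety already attains the expected dimension $3k-1=8$. Write $\p_p\subseteq\p^r$ for the coordinate subspace spanned by the lattice points of $Q_p$; then $Z\subseteq\p_p$ and $\Sec_3(Z)\subseteq\p_p$, and the goal is to show that $\Sec_3(Z)$ survives the degeneration as a distinguished $8$-dimensional component of $\lim_D\Sec_3(X)$ that is disjoint from the coordinate $\p^8$'s coming from skew $3$-sets.

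First I would factor $\pi_D$ through the intermediate regular subdivision $D^1$, which exists by hypothesis and contains $Q_p$. Let $X^1_t$ be the fibre of $\pi_{D^1}$, so that $X^1_t\cong X$ for $t\ne0$ while the central fibre $X^1_0$ is the reduced union of the toric surfaces indexed by $D^1$, one component being $Z$. Exactly as in the $k=2$ case, the flat limit $\lim_{D^1}\Sec_3(X)$ contains $\Sec_3(Z)$ together with all the joins obtained by distributing the three points of a trisecant plane among the components of $X^1_0$ (three points on $Z$, or two on $Z$ and one elsewhere, and so on). Since $\dim\Sec_3(Z)=8$, the piece $\Sec_3(Z)$ is a full-dimensional component whose apparent trisecant planes meeting a general codimension-$8$ subspace of $\p_p$ number exactly $\nu_3(Z)$.

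Next I would carry out the second degeneration $\pi_{D^2}$, sending $X^1_0$ to the union of planes $X_0$, and take flat limits once more, so that $\lim_D\Sec_3(X)=\lim_{D^2}\bigl(\lim_{D^1}\Sec_3(X)\bigr)$. This limit contains $\lim_{D^2}\Sec_3(Z)$, again $8$-dimensional and carrying $\nu_3(Z)$ trisecant planes since this count is preserved under the flat degeneration $\pi_{D^2}$; it also contains the flat limits of all the joins, whose union carries the $\p^8$'s spanned by the skew $3$-sets of $N_3(D)$, contributing $\bar{\nu}_3(D)$ by Theorem~\ref{old lower bound}. Any further $8$-dimensional components arising from mixed joins only increase $\nu_3(X)$, hence are harmless for a lower bound.

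The crux is to check that the two families do not overlap, i.e. that no $\p^8$ spanned by a skew $3$-set lies in $\lim_{D^2}\Sec_3(Z)$. As $\lim_{D^2}\Sec_3(Z)\subseteq\p_p$, it suffices to see that such a $\p^8$ is not contained in $\p_p$. A skew $3$-set is a triple of pairwise disjoint triangles of $D$, whereas every triangle of $D$ lying inside the convex figure $Q_p$ has $p$ as a vertex, so any two such triangles meet at $p$; hence at most one triangle of a skew $3$-set can lie in $Q_p$, the remaining two contribute six vertices outside $Q_p$, and the spanned $\p^8\not\subseteq\p_p$. Thus $\lim_{D^2}\Sec_3(Z)$ and the $\p^8$'s of $N_3(D)$ are distinct components of $\lim_D\Sec_3(X)$, their contributions add, and $\nu_3(X)\ge\bar{\nu}_3(D)+\nu_3(Z)$ follows. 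I expect the main difficulty to be the bookkeeping of the second step, namely verifying that the extra $8$-dimensional components produced when the three points are split among adjacent components of $X_0$ neither erase any skew $3$-set nor merge with $\lim_{D^2}\Sec_3(Z)$; the convexity of $Q_p$ and the fact that its triangles share the single point $p$ are precisely what keep the two families apart.
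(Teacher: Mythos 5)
Your proposal is correct and follows essentially the same route as the paper: factor $\pi_D$ through $\pi_{D^1}$, observe that $\Sec_3(Z)$ and the iterated joins of components of the intermediate central fibre sit in $\lim_{D^1}\Sec_3(X)$, degenerate again via $\pi_{D^2}$, and check that the $\p^8$'s of the skew $3$-sets do not lie in $\lim_{D^2}\Sec_3(Z)$, so the degrees add. Your non-interference step is actually more explicit than the paper's (which just refers back to the $k=2$ argument); the only small inaccuracy is the claim that the two triangles of a skew $3$-set not contained in $Q_p$ contribute six vertices outside $Q_p$ --- they may share boundary lattice points with $Q_p$, but since neither is contained in the convex figure $Q_p$ each has at least one vertex outside it, which is all that is needed to conclude that the spanned $\p^8$ is not contained in $\p_p$.
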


\begin{proof}
 It is easy to see that 
 $\textrm{Sec}_3(Z)$ and
 $J(Y_i,J(Y_j,Y_l))$, where $Y_i,Y_j,Y_l$ are components of $\lim_{D^1}X$,
are in the flat limit $\lim_{D^1}\textrm{Sec}_2(X)$.

Then, looking at the second degeneration $D^2$, we see that the $\p^8$'s spanned by the skew $3$-sets of $D^2$ (that are the skew $3$-sets of $D$) and the limit $\lim_{D^2}\textrm{Sec}_3(Z)$ are $8$-dimensional of $\textrm{Sec}_3(X)$ with respect to $D$. 

Finally, the contributions $\bar{\nu}_3(D)$ and $\nu_3(Z)$ do not interfere with each other, following the same argument as in Theorem \ref{new lower bound}.
\end{proof}

If there are more than one singularity in $D$, $\{p_i\}_{i \in I}$, satisfying the  hypotheses of Theorem \ref{new lower bound 2}, 
arguing as for the case $k=2$, we get inequality (\ref{senza overlap}) for $k=3$.

\subsubsection{On the existence of an intermediate regular subdivision of a given triangulation}\label{existence D^1}
Let $P$, $D$ and $Q$ be as previously defined. 
To conclude this section we explore the existence of an intermediate regular subdivision $D^1$ containing $Q$. 

Assume first of all that either the edges of $Q$ have (normalized) length equal to one or they lie on the boundary of $P$ (under this assumption $p$ must be  an elliptic singularity). The family of sub-polytopes of $P$ given by $Q$ and by the $\textrm{Area}(P)-\delta$ remaining triangles of $D$ form a subdivision of $P$ (see Figure \ref{decomposed deg lati=1}). 
Such a subdivision is regular. Indeed, given a lifting function $F_D$ over $D$, one can always find a lifting function $F_{D^1}$ over $D^1$, exploiting the fact that strict convexity is a local property: it is enough to flatten $F_D$ over $Q$. More precisely, one can always assume that  
$
F_D(\underline{m})\gg 2, \textrm{ for }  \underline{m}\notin Q
$
and that
$$
F_D(\underline{m})=\left\{\begin{array}{ll}
 1-\epsilon & \textrm{ if } \underline{m}=p\\
 1 & \textrm{ if } \underline{m}\in Q\cap\Z^2\setminus \{p\}\\
 \end{array}
\right.,
$$
with $0<\epsilon \ll 1$.
Hence, a lifting function for $D^1$,   $F_{D^1}$, is the following:
$$
F_{D^1}(\underline{m}):=\left\{\begin{array}{ll}
1 & \textrm{ if } \underline{m}=p\\
F_{D}(\underline{m})& \textrm{ if }\underline{m}\neq p\\
 \end{array}
\right.
$$

Suppose now that $Q$ has edges $L_1\dots,L_s$, $s\leq l$ of length $>1$.
Let us construct  a partition of $P$ containing $Q$, triangles and convex polytopes given as union of triangles of $D$, using the following algorithm.\\
\textsc{Input}: a regular unimodular triangulation $D$ of $P$.\\
\textsc{Output}: a regular subdivision $D^1$ of $P$  containing  $Q$.
\begin{itemize}
\item[-] Let $S_i$ be the minimal convex union of triangles of $D$ such that $S_i\cap Q=L_i$, for $i=1,\dots,s$. If all the $S_i$'s have \emph{external} edges (i.e., all the edges except $L_i$) either of length one or lying on $\partial P$, we stop. 
 \item[-] Otherwise, let $L_{i,1},\dots,L_{i,s_i}$ be the external edges of $S_i$ of length $>1$, for $i\in\{1,\dots,s\}$. Let $S_{i,j}$ be the minimal convex union of triangles of $D$ such that $S_{i,j}\cap S_i=L_{i,j}$, $i=1,\dots,s$, $j=1,\dots,s_i$. If all the $S_{i,j}$'s have external edges either of length one or contained in $\partial P$, then we stop.
 \item[-] Otherwise we go on as above, until all the  polytopes obtained in this way have external edges either of length one, or contained in $\partial P$.
\end{itemize}
 
This process is finite. The output is a complex $D^1$ whose maximal polyhedra are $Q$, the $S_{i}$'s, the $S_{i,j}$'s, etc., and the remaining triangles of $D$. If one is able to flatten the lifting function $F_D$ over $Q$, the $S_{i}$'s, the $S_{i,j}$'s, etc., by rescaling it in such a way that the resulting piecewise linear function is strictly convex over $P$, one has found a lifting function $F_{D^1}$ for $D^1$ to be regular. 

At this point it is not difficult to define $D^2$: it is sufficient to take unimodular triangulations $D_Q$ of $Q$, $D_{S_i}$ of $S_i$, $D_{S_{i,j}}$ of $S_{i,j}$, etc., such that, combining them, one obtains the full regular unimodular triangulation $D$ of $P$. See for example Figure \ref{decomposed deg razio} to get an idea.


\section{Classification of delightful triangulations of polytopes with $g\leq1$}\label{classification delightful g=0,1}

In this section we classify all delightful triangulation of $g\leq1$ polytopes in $\R^2$.  
A necessary  condition for the degeneration to be $2$-delightful is that it contains no lattice point as in Table \ref{caso razionale sec 1} or  Table \ref{caso ellittico sec 1} in its configuration. Surprisingly we will see that the triangulations verifying this property turn out to be $k$-delightful, for any $k$.

\subsection{The rational case}
The $g=0$ polytopes are classified in Theorem \ref{census0}.  In this section we are going to prove the following theorem.

\begin{thm}\label{delightful class g=0}
The trapezium $P^0(4,2\delta+i,\delta+i)$ admits delightful triangulations if and only if $0\leq i\leq3$.

 The unique delightful triangulations of $P^0(4,2\delta+i,\delta+i)$, up to lattice equivalence, are the ones represented  in Figure \ref{delightful rational}.
\begin{figure}[h!!]
 \begin{center}
\begin{tabular}{ll}
\setlength{\unitlength}{0.333333mm}$i=0:$ &
$D_{\delta,\delta}\ \ $ 
\begin{picture}(140,25)(0,0)
\put(0,0){\line(0,1){20}}\put(20,0){\line(0,1){20}}\put(40,0){\line(0,1){20}}\put(60,0){\line(0,1){20}}\put(100,0){\line(0,1){20}}\put(120,0){\line(0,1){20}}
\put(0,0){\line(1,0){120}}
\put(0,20){\line(1,0){120}}
\put(0,20){\line(1,-1){20}}\put(20,20){\line(1,-1){20}}\put(40,20){\line(1,-1){20}}\put(100,20){\line(1,-1){20}}\put(75,5){$\cdots$}
\end{picture} \ \ \ \  
\setlength{\unitlength}{0.333333mm}$D'_{\delta,\delta}\ \ $
\begin{picture}(120,25)(0,0)
\put(0,0){\line(0,1){20}}\put(120,0){\line(0,1){20}}
\put(0,0){\line(1,0){120}}
\put(0,20){\line(1,0){120}}
\put(0,20){\line(1,-1){20}}\put(20,20){\line(1,-1){20}}\put(40,20){\line(1,-1){20}}\put(100,20){\line(1,-1){20}}
\put(0,20){\line(2,-1){40}}\put(20,20){\line(2,-1){40}}\put(80,20){\line(1,-1){20}}\put(80,20){\line(2,-1){40}}\put(65,5){$\cdots$}
\end{picture}\\
\setlength{\unitlength}{0.333333mm}$i=1:$ &
$D_{\delta,\delta+1}$
\begin{picture}(140,25)(0,0)
\put(0,0){\line(0,1){20}}\put(20,0){\line(0,1){20}}\put(40,0){\line(0,1){20}}\put(60,0){\line(0,1){20}}\put(100,0){\line(0,1){20}}\put(120,0){\line(0,1){20}}
\put(0,0){\line(1,0){140}}
\put(0,20){\line(1,0){120}}
\put(0,20){\line(1,-1){20}}\put(20,20){\line(1,-1){20}}\put(40,20){\line(1,-1){20}}\put(100,20){\line(1,-1){20}}\put(120,20){\line(1,-1){20}}\put(75,5){$\cdots$}
\end{picture}\ \ \ \ 
\setlength{\unitlength}{0.333333mm}$D'_{\delta,\delta+1}$
\begin{picture}(140,25)(0,0)
\put(0,0){\line(0,1){20}}\put(120,20){\line(1,-1){20}}\put(100,20){\line(2,-1){40}}
\put(0,0){\line(1,0){140}}
\put(0,20){\line(1,0){120}}
\put(0,20){\line(1,-1){20}}\put(20,20){\line(1,-1){20}}\put(40,20){\line(1,-1){20}}\put(100,20){\line(1,-1){20}}
\put(0,20){\line(2,-1){40}}\put(20,20){\line(2,-1){40}}\put(80,20){\line(1,-1){20}}\put(80,20){\line(2,-1){40}}\put(65,5){$\cdots$}
\end{picture}\\
\setlength{\unitlength}{0.333333mm}$i=2:$ & 
$D'_{\delta,\delta+2}$
\begin{picture}(160,25)(0,0)
\put(0,0){\line(0,1){20}}\put(120,20){\line(1,-1){20}}\put(100,20){\line(2,-1){40}}\put(120,20){\line(2,-1){40}}
\put(0,0){\line(1,0){160}}
\put(0,20){\line(1,0){120}}
\put(0,20){\line(1,-1){20}}\put(20,20){\line(1,-1){20}}\put(40,20){\line(1,-1){20}}\put(100,20){\line(1,-1){20}}
\put(0,20){\line(2,-1){40}}\put(20,20){\line(2,-1){40}}\put(80,20){\line(1,-1){20}}\put(80,20){\line(2,-1){40}}\put(65,5){$\cdots$}
\end{picture}
\\
\setlength{\unitlength}{0.333333mm}$i=3:$ & 
$D'_{\delta,\delta+3}$
\begin{picture}(180,25)(0,0)
\put(0,0){\line(0,1){20}}\put(120,20){\line(1,-1){20}}\put(100,20){\line(2,-1){40}}\put(120,20){\line(2,-1){40}}
\put(0,0){\line(1,0){180}}
\put(0,20){\line(1,0){120}}\put(120,20){\line(3,-1){60}}
\put(0,20){\line(1,-1){20}}\put(20,20){\line(1,-1){20}}\put(40,20){\line(1,-1){20}}\put(100,20){\line(1,-1){20}}
\put(0,20){\line(2,-1){40}}\put(20,20){\line(2,-1){40}}\put(80,20){\line(1,-1){20}}\put(80,20){\line(2,-1){40}}\put(65,5){$\cdots$}
\end{picture}
\end{tabular}
\end{center}
\caption{Delightful triangulations of $g=0$ polytopes}\label{delightful rational}
\end{figure}
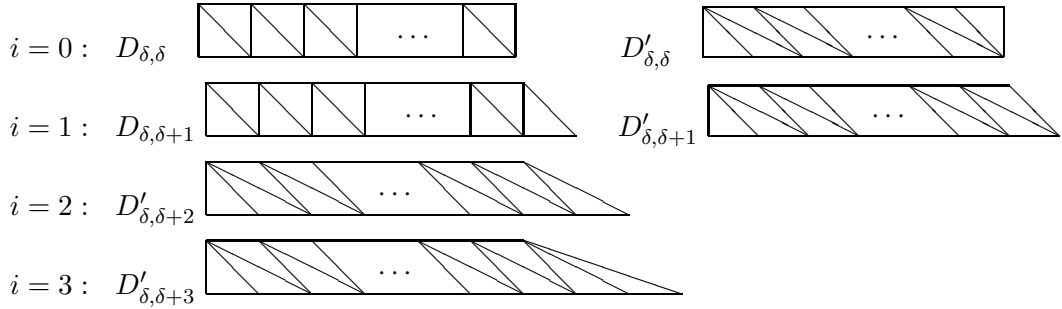\end{thm}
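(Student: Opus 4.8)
The plan is to prove both the existence cutoff and the enumeration through a single combinatorial invariant: the sizes of the fans of triangles around the boundary lattice points. The whole argument rests on first showing that delightfulness is equivalent to the \emph{absence} of the bad singularities of Table \ref{caso razionale sec 1}, and then reading that absence as an explicit cap on fan sizes.

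First I would establish necessity. Since $P^0(4,2\delta+i,\delta+i)$ has $g=0$ it carries no elliptic singularity, so every relevant $Q_p$ is a rational one. By Proposition \ref{new lower bound} (the $k=2$ instance of Theorem \ref{general sum}), if a triangulation $D$ contains a rational singularity $p$ whose surface $Z_p$ has $\dim\Sec_2(Z_p)=5$, then $\nu_2(X)\geq\bar\nu_2(D)+\nu_2(Z_p)>\bar\nu_2(D)$ because $\nu_2(Z_p)>0$; hence $D$ is not $2$-delightful and a fortiori not delightful. Hypothesis (2) of Theorem \ref{general sum} is not an obstruction here: every sub-polytope $Q_p$ of a height-one trapezium has its edges either of length one, contained in $\partial P$, or equal to the slanted edge, so the intermediate subdivision $D^1$ is produced exactly as in Subsection \ref{existence D^1}. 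Thus a delightful $D$ must avoid all such $Q_p$.

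Second I would classify which vertex stars are bad in the height-one setting. In a unimodular triangulation of such a trapezium the triangles around a point $p$ always assemble into a fan, and a direct inspection shows that the full star $Q_p$ equals the sub-scroll $S(2,s)$ when $p$ is interior to an edge (here $s$ counts the fan triangles through $p$, the opposite side having length two because of the two closing triangles), and equals $S(1,t)$ when $p$ is a corner (only one closing triangle). As $S(2,s)$ is a smooth, non-$2$-defective scroll with $\dim\Sec_2=5$ exactly when $s\geq2$, and $S(1,t)$ exactly when $t\geq3$, avoiding the singularities of Table \ref{caso razionale sec 1} is \emph{equivalent} to the condition that every fan at an interior edge-point has at most one triangle while every fan at a corner has at most two.

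Third, this cap yields both the bound $i\leq3$ and the enumeration. Encoding $D$ as a monotone staircase lattice path from $(0,0)$ to $(\delta+i,\delta)$, with horizontal runs recording the top-fans and vertical runs the bottom-fans, the caps read: interior horizontal runs $\leq1$ and the two extreme horizontal runs $\leq2$ (likewise vertically). Summing horizontal run lengths gives $\delta+i=\sum_a h_a\leq 2+(\delta-1)+2=\delta+3$, so $i\leq3$, which proves nonexistence of delightful triangulations for $i\geq4$. For $0\leq i\leq3$ I would list the paths compatible with the caps: the inequality is saturated when $i=3$ (forcing $h_0=h_\delta=2$ and all interior runs equal to $1$) and loosens as $i$ decreases; after discarding lattice-equivalent duplicates one checks that precisely the triangulations of Figure \ref{delightful rational} survive, a unique one for $i=2,3$ and two for $i=0,1$. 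Finally I would show these candidates are delightful for \emph{every} $k$, not just $2$-delightful: they are the minimal-$k$-secant-degree scrolls $S(\delta,\delta+i)$ with $\nu_k(X)=\binom{2\delta+i-2k+2}{k}$, so it suffices to upgrade the general lower bound $\nu_k(X)\geq\bar\nu_k(D)$ of Theorem \ref{old lower bound} to an equality by counting the skew $k$-sets of these explicit staircase/fan triangulations and matching the binomial coefficient. I expect this last step to be the main obstacle: the degeneration machinery only controls the $k=2,3$ contributions, so proving $\bar\nu_k(D)=\nu_k(X)$ for all $k$ reduces to an exact enumeration of $k$-tuples of pairwise disjoint triangles in a staircase configuration, which is the combinatorially delicate heart of the argument.
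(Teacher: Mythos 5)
Your overall architecture matches the paper's: necessity via the rational singularities of Table \ref{caso razionale sec 1} (through Proposition \ref{new lower bound}/Theorem \ref{general sum}), a combinatorial classification of the singularity-free triangulations yielding $i\leq 3$ and the list of Figure \ref{delightful rational}, and then a verification that the survivors are delightful for every $k$. Where you genuinely diverge is in the middle step. The paper builds the triangulations incrementally: starting from the forced first triangle it shows that at each stage there are exactly two ways to attach the next one, and one of the two is always excluded because it creates a boundary point covered by a convex chain of at least four triangles; iterating gives $D_{\delta,\delta+i}$, $D'_{\delta,\delta+i}$ and nothing else, and for $i\geq 4$ every branch dies. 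Your version packages the same exclusion rule as a global cap on fan sizes ($s_j\leq 1$ at interior boundary vertices, $\leq 2$ at corners, since the star of an interior edge-point is $S(2,s_j)$ and of a corner is $S(1,t)$, bad exactly from degree $4$ on), and then gets $i\leq 3$ in one line by summing $\delta+i=\sum s_j\leq \delta+3$. That is a legitimate and arguably cleaner route to the bound; the paper's incremental argument earns the uniqueness statement more directly, while yours still requires a finite check of cap-compatible staircases modulo lattice equivalence. Your handling of hypothesis (2) of Theorem \ref{general sum} is also sound: in a height-one trapezium every $Q_p$ has all edges of length one or on $\partial P$, so the intermediate subdivision $D^1$ of Subsection \ref{existence D^1} exists by simply flattening the lifting function over $Q_p$.

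The genuine gap is the one you flag yourself at the end: you do not prove that the candidate triangulations are $k$-delightful for all $k$, and you propose to do it by directly enumerating skew $k$-sets in a staircase and matching $\binom{2\delta+i-2k+2}{k}$. The paper avoids that enumeration entirely via Lemma \ref{D' del implica D del}: peel off the last triangle $T$, so that $P'=P\setminus T$ is again one of the trapezia in the list with $D'=D\setminus T$ again of the form in Figure \ref{delightful rational}, and let $D''$ be the set of triangles of $D'$ disjoint from $T$. The two observations that make this work are that $\bar{\nu}_k(D)=\bar{\nu}_k(D')+\bar{\nu}_{k-1}(D'')$ and that in these specific configurations $T$ meets exactly three other triangles, so $\#(D'')=d-3$; then Pascal's rule $\binom{d-2k+1}{k}+\binom{d-2k+1}{k-1}=\binom{d-2k+2}{k}$ together with induction on $d$ and on $k$ (base cases $d=4,5$, $k=2$, checked by hand) gives $\bar{\nu}_k(D)=\nu_k(X_P)$. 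If you want to complete your write-up you should either import this one-triangle-peeling recursion or actually carry out the disjoint-$k$-tuple count; as it stands, the sufficiency half of the theorem is asserted but not established.
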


 The outline of the proof will be the following. 
As a first step we fix $k=2$ and we construct triangulations without rational singularities at the (boundary) lattice points for the polytopes $P^0(4,2\delta+i,\delta+i)$, $\delta\geq2$, $i\geq0$.  Then we will investigate their $k$-delightfulness.

\begin{rmk}
The unique triangulations of  $P^0(4,2\delta+i,\delta+1)$ without rational singularities occur when $0\leq i\leq 3$ and are the ones in Figure \ref{delightful rational}. 
\end{rmk}
\begin{proof}
 Consider the rectangle $P^0(4,2\delta,\delta)$ with bases of length $\delta$. 
 We start the triangulation in the only possible way (up to equiaffinity), as follows
\begin{center}
\setlength{\unitlength}{0.333333mm}
\begin{picture}(80,20)(0,0)\put(0,0){\line(0,1){20}}\put(80,0){\line(0,1){20}}\put(0,20){\line(1,0){80}}
\put(0,0){\line(1,0){80}}\put(0,20){\line(1,-1){20}}\put(80,0){\line(0,1){20}}
\end{picture}\end{center}
Then there are only two distinct possibilities to add a further triangle that is adjacent to the previous one:
\begin{center}
(a)
\setlength{\unitlength}{0.333333mm}
\begin{picture}(120,20)(0,0)
\put(0,0){\line(0,1){20}}\put(120,0){\line(0,1){20}}\put(0,20){\line(1,0){120}}\put(0,0){\line(1,0){120}}\put(0,20){\line(1,-1){20}}\put(120,0){\line(0,1){20}}\put(20,0){\line(0,1){20}}
\end{picture}\ \ \ \ \ \ \  
(b)
\begin{picture}(120,20)(0,0)
\put(0,0){\line(0,1){20}}\put(120,0){\line(0,1){20}}\put(0,20){\line(1,0){120}}\put(0,0){\line(1,0){120}}\put(0,20){\line(1,-1){20}}\put(120,0){\line(0,1){20}}\put(0,20){\line(2,-1){40}}
\end{picture}
\end{center}
In case $(a)$, the ways of putting another triangle adjacent to the previous are the following:
\begin{center}
(a.1)
\setlength{\unitlength}{0.333333mm}
\begin{picture}(120,20)(0,0)
\put(0,0){\line(0,1){20}}\put(120,0){\line(0,1){20}}\put(0,20){\line(1,0){120}}\put(0,0){\line(1,0){120}}\put(0,20){\line(1,-1){20}}\put(120,0){\line(0,1){20}}\put(20,0){\line(0,1){20}}\put(20,20){\line(1,-1){20}}
\end{picture}\ \ \ \ \ \ \  
(a.2)
\begin{picture}(120,20)(0,0)
\put(0,0){\line(0,1){20}}\put(120,0){\line(0,1){20}}\put(0,20){\line(1,0){120}}\put(0,0){\line(1,0){120}}\put(0,20){\line(1,-1){20}}\put(120,0){\line(0,1){20}}\put(20,0){\line(0,1){20}}\put(20,0){\line(1,1){20}}
\end{picture}
\end{center}
The second possibility must be excluded, otherwise we would get at least four triangles covering the point with coordinates $(1,0)$ and this certainly will generate a rational singularity (see Table \ref{caso razionale sec 1}). 
On the other hand, starting from the case (a.1) and  adding a triangle in the subdivision, we get 
\begin{center}
(a.1.1)
\setlength{\unitlength}{0.333333mm}
\begin{picture}(120,20)(0,0)
\put(0,0){\line(0,1){20}}\put(120,0){\line(0,1){20}}\put(0,20){\line(1,0){120}}\put(0,0){\line(1,0){120}}\put(0,20){\line(1,-1){20}}\put(120,0){\line(0,1){20}}\put(20,0){\line(0,1){20}}\put(20,20){\line(1,-1){20}}\put(40,0){\line(0,1){20}}
\end{picture}\ \ \ \ \ \ \  
(a.1.2)
\begin{picture}(120,20)(0,0)
\put(0,0){\line(0,1){20}}\put(120,0){\line(0,1){20}}\put(0,20){\line(1,0){120}}\put(0,0){\line(1,0){120}}\put(0,20){\line(1,-1){20}}\put(120,0){\line(0,1){20}}\put(20,0){\line(0,1){20}}\put(20,20){\line(1,-1){20}}\put(20,20){\line(2,-1){40}}
\end{picture}\end{center}
The second configuration is excluded once again, otherwise the point $(1,1)$ would be covered by a chain of at least four triangles. So, iterating this argument, we obtain $D_{\delta,\delta}$.

In case (b), the possibilities are:
\begin{center}
(b.1)
\setlength{\unitlength}{0.333333mm}
\begin{picture}(120,20)(0,0)
\put(0,0){\line(0,1){20}}\put(120,0){\line(0,1){20}}\put(0,20){\line(1,0){120}}\put(0,0){\line(1,0){120}}\put(0,20){\line(1,-1){20}}\put(120,0){\line(0,1){20}}\put(0,20){\line(2,-1){40}}\put(20,20){\line(1,-1){20}}
\end{picture}\ \ \ \ \ \ \  
(b.2)
\begin{picture}(120,20)(0,0)
\put(0,0){\line(0,1){20}}\put(120,0){\line(0,1){20}}\put(0,20){\line(1,0){120}}\put(0,0){\line(1,0){120}}\put(0,20){\line(1,-1){20}}\put(120,0){\line(0,1){20}}\put(0,20){\line(2,-1){40}}\put(0,20){\line(3,-1){60}}
\end{picture}\end{center}
As above, the case (b.2) is excluded, otherwise $(0,1)$ would be a rational singularity. Then from (b.1) we obtain
\begin{center}
(b.1.1)
\setlength{\unitlength}{0.333333mm}
\begin{picture}(120,20)(0,0)
\put(0,0){\line(0,1){20}}\put(120,0){\line(0,1){20}}\put(0,20){\line(1,0){120}}\put(0,0){\line(1,0){120}}\put(0,20){\line(1,-1){20}}\put(0,20){\line(2,-1){40}}\put(20,20){\line(1,-1){20}}\put(20,20){\line(2,-1){40}}
\end{picture}\ \ \ \ \ \ \  
(b.1.2) \begin{picture}(120,30)(0,0)
\put(0,0){\line(0,1){20}}\put(120,0){\line(0,1){20}}\put(0,20){\line(1,0){120}}\put(0,0){\line(1,0){120}}\put(0,20){\line(1,-1){20}}\put(120,0){\line(0,1){20}}\put(0,20){\line(2,-1){40}}\put(20,20){\line(1,-1){20}}\put(40,0){\line(0,1){20}}
\end{picture}
\end{center}
We exclude the case (b.1.2) and iterating the process we get $D'_{\delta,\delta}$  from (b.1.1).
The subdivisions $D_{\delta,\delta}$ and $D'_{\delta,\delta}$ do not contain any rational singularity and they are the unique triangulations of $P^0(4,2\delta,\delta)$ with this property.

Consider  $P^0(4,2\delta+1,\delta+1)$. One has two distinct ways (up to equiaffinity) to start a triangulation of this polytope:
\begin{center}
(a)
\setlength{\unitlength}{0.333333mm}
\begin{picture}(140,20)(0,0)
\put(0,0){\line(0,1){20}}\put(140,0){\line(-1,1){20}}\put(0,20){\line(1,0){120}}\put(0,0){\line(1,0){140}}\put(0,20){\line(1,-1){20}}
\end{picture}\ \ \ \ \ \ \  
(b)
\begin{picture}(140,20)(0,0)
\put(0,0){\line(0,1){20}}\put(140,0){\line(-1,1){20}}\put(0,20){\line(1,0){120}}\put(0,0){\line(1,0){140}}\put(0,0){\line(1,1){20}}
\end{picture}
\end{center}
From (a), arranging the argument of above to this case, we arrive to $D_{\delta,\delta+1}$ or $D'_{\delta,\delta+1}$.  Instead, from (b) we get either
\begin{center}
\begin{picture}(140,20)(0,0)
\put(0,0){\line(0,1){20}}\put(20,0){\line(0,1){20}}\put(40,0){\line(0,1){20}}\put(60,0){\line(0,1){20}}\put(100,0){\line(0,1){20}}\put(120,0){\line(0,1){20}}
\put(0,0){\line(1,0){140}}
\put(0,20){\line(1,0){120}}
\put(0,0){\line(1,1){20}}\put(20,0){\line(1,1){20}}\put(40,0){\line(1,1){20}}\put(100,0){\line(1,1){20}}\put(120,20){\line(1,-1){20}}\put(75,5){$\cdots$}
\end{picture}\end{center}
that is lattice equivalent to $D'_{\delta,\delta+1}$, or
\begin{center}
\begin{picture}(140,20)(0,0)
\put(0,0){\line(0,1){20}}\put(0,0){\line(2,1){40}}\put(20,0){\line(2,1){40}}\put(80,0){\line(1,1){20}}\put(80,0){\line(2,1){40}}\put(120,0){\line(0,1){20}}
\put(0,0){\line(1,0){140}}
\put(0,20){\line(1,0){120}}
\put(0,0){\line(1,1){20}}\put(20,0){\line(1,1){20}}\put(40,0){\line(1,1){20}}\put(100,0){\line(1,1){20}}\put(120,20){\line(1,-1){20}}\put(65,5){$\cdots$}
\end{picture}\end{center}
that is excluded; in fact a singularity at the point $(\delta,1)$ has been generated.

Finally, arguing as above, we get $D'_{\delta,\delta+2}$
for the trapezium $P^0(4,2\delta+2,\delta+2)$ and $D'_{\delta,\delta+3}$
for $P^0(4,2\delta+3,\delta+3)$.
The details are easy and left to the reader.

If $i\geq 4$, it is not possible to find a triangulation without generating a rational singularity, because a chain of four triangles around a boundary lattice point will inevitably be created.
\end{proof}

Let now $P'\subseteq P$ be polytopes with $\textrm{Area}(P')+1=\textrm{Area}(P)=d$, $g(P)=g(P')=0$  and such that $P\setminus P'=T$ is a triangle of normalized area $1$. Let $D$ and $D'=D \setminus T$ be regular triangulations of $P$ and $P'$ respectively. Assume moreover that $\dim(\Sec_2(X_P))=\dim(\Sec_2(X_{P'}))=5$. Notice that, under these hypotheses, if $P$ belongs to the class $P^0(4,2\delta+i,\delta+i)$, $0\leq i\leq 3$, with $d=2\delta+i$ for some $\delta,i$,  then $P'$ has also the form $P^0(4,2\delta'+i',\delta+i')$, $0\leq i'\leq 3$, with $d-1=2\delta'+i'$ for some $\delta',i'$. Notice moreover that if $D$ is lattice equivalent to one of the configurations in Figure \ref{delightful rational}, then $D'$ is.


Define $D''=\{T''\in D': T''\cap T=\emptyset\}\subseteq D'\subseteq D$. $D''$ is given by those triangles of $D$ which do not intersect $T$. Using these notations we can describe $N_{k}(D)$ as the set given by the skew $k$-sets contained in $D'$ and by those involving $T$, namely
  $N_k(D)=N_k(D')\cup \{(T,(T''_1,\dots,T''_{k-1})):(T''_1,\dots,T''_{k-1})\in N_{k-1}(D'')\}$. 

\begin{lemma}\label{D' del implica D del}
In the above notation,  $D$ is $k$-delightful if and only if $D'$ is $k$-delightful and $D''$ is $(k-1)$-delightful.
\end{lemma}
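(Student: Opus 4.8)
The plan is to reduce the equivalence to a single additivity identity for the secant numbers $\nu$, matched by the additivity of the skew--set counts $\bar\nu$, and then to read off both implications at once from the lower bound of Theorem \ref{old lower bound}. First I would record the combinatorial side. The description of $N_k(D)$ established just above the lemma is a \emph{disjoint} union, so passing to cardinalities gives
\[
\bar\nu_k(D)=\bar\nu_k(D')+\bar\nu_{k-1}(D'').
\]
Here the support of $D''$ is the convex subpolytope $P''\subseteq P$ obtained by deleting from $P$ the triangle $T$ together with the triangles of $D$ touching it; in the configurations at hand (corner $T$, staircase--type $D$) one checks that $P''$ is again a $g=0$ polytope of the class $P^0(4,\cdot,\cdot)$ and that $D''$ is a regular unimodular triangulation of it defining a toric surface $X_{P''}$, so that $\bar\nu_{k-1}(D'')$ and the notion of $(k-1)$-delightfulness of $D''$ are meaningful.

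Second, and this is the heart of the argument, I would prove the parallel geometric identity
\[
\nu_k(X_P)=\nu_k(X_{P'})+\nu_{k-1}(X_{P''}).
\]
For this I would run the decomposed--degeneration technique of Proposition \ref{new lower bound}: degenerate $X_P$ via the regular subdivision $\{P',T\}$ (whose lifting function is obtained by flattening $F_D$ over $T$) to the reduced union $X_{P'}\cup\p_T$, meeting along the line $X_{P'\cap T}$. In the flat limit of $\Sec_k(X)$ the only components of the expected dimension $3k-1$ are $\Sec_k(X_{P'})$ and the join $J(\Sec_{k-1}(X_{P'}),\p_T)$; every further join $J(\Sec_{k-j}(X_{P'}),\p_T)$ with $j\ge 2$ has dimension $\le 3k-4$ and is negligible. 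The first component carries the $k$-secant $\p^{k-1}$'s of $X_{P'}$, contributing $\nu_k(X_{P'})$. The second carries the $\p^{k-1}$'s spanned by a point of $\p_T$ and a $(k-1)$-secant $\p^{k-2}$ of $X_{P'}$ skew to $\p_T$; after the further degeneration of $X_{P'}$ to $\bigcup_{Q\in D'}\p_Q$ exactly the $(k-1)$-secant planes supported on triangles disjoint from $T$ survive with full span, i.e.\ those indexed by $N_{k-1}(D'')$, so this contribution is governed by $X_{P''}$ and equals $\nu_{k-1}(X_{P''})$.

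Equivalently, since all three surfaces are minimal $k$-secant degree with secant order one (Section \ref{secant degree}), the geometric identity is simply Pascal's rule $\binom{d-2k+1}{k}+\binom{d-2k+1}{k-1}=\binom{d-2k+2}{k}$ applied to $\nu_j=\binom{\textrm{Area}-2j+2}{j}$, using $\textrm{Area}(P')=d-1$ and $\textrm{Area}(P'')=d-3$. Subtracting the two displayed identities then yields
\[
\nu_k(X_P)-\bar\nu_k(D)=\bigl(\nu_k(X_{P'})-\bar\nu_k(D')\bigr)+\bigl(\nu_{k-1}(X_{P''})-\bar\nu_{k-1}(D'')\bigr),
\]
and by Theorem \ref{old lower bound} both summands on the right are nonnegative. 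Hence the left side vanishes, i.e.\ $D$ is $k$-delightful, if and only if each summand vanishes separately, i.e.\ $D'$ is $k$-delightful and $D''$ is $(k-1)$-delightful; this gives both directions of the equivalence simultaneously.

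The main obstacle is the geometric identity: one must make sure that no top--dimensional component of $\lim\Sec_k(X)$ is overlooked and, above all, that the two surviving components contribute \emph{without overlap}, so that their degrees genuinely add rather than requiring an inclusion--exclusion correction. The transversality estimate $\dim(\p_i\cap\p_j)\le 3$ used in Theorem \ref{general sum} is the analogue I would need here between $\Sec_k(X_{P'})$ and the join $J(\Sec_{k-1}(X_{P'}),\p_T)$: since a general $k$-secant plane of the first family uses no point of $\p_T$ while every plane of the second family does, the two families are disjoint and the limit components share no $(3k-1)$-dimensional locus, which is exactly what licenses the additivity of the degrees.
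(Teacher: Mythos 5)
Your argument is correct and is essentially the paper's own proof: the paper likewise combines the additivity $\bar\nu_k(D)=\bar\nu_k(D')+\bar\nu_{k-1}(D'')$ coming from the displayed decomposition of $N_k(D)$ with the bounds $\bar\nu_k(D')\le{{d-2k+1}\choose{k}}$ and $\bar\nu_{k-1}(D'')\le{{d-2k+1}\choose{k-1}}$ from Theorem \ref{old lower bound}, and then Pascal's rule to identify the sum with $\nu_k(X_P)={{d-2k+2}\choose{k}}$, so that equality forces equality in both summands and conversely. The decomposed-degeneration digression is not needed, since, as you yourself observe, the geometric identity $\nu_k(X_P)=\nu_k(X_{P'})+\nu_{k-1}(X_{P''})$ reduces to exactly this instance of Pascal's rule for the minimal-secant-degree formulas.
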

\begin{proof}
Since $D''$ contains at most $d-3$ triangles, then $\bar{\nu}_{k-1}(D'')\leq {{(d-3)-2(k-2)}\choose {k-1}}$. Hence
$\bar{\nu}_{k}(D)=\bar{\nu}_{k}(D')+\bar{\nu}_{k-1}(D'')\leq {{(d-1)-2(k-1)}\choose{k}}+{{(d-3)-2(k-2)}\choose {k-1}}={{d-2(k-1)}\choose{k}}$. Since the number on the right equals $\nu_k(X_P)$ the thesis follows. 
\end{proof}

This argument allows to use induction on $d=2\delta+i$ and $k$ to prove that the degenerations depicted in Figure \ref{delightful rational}  of the trapezia $P^0(4,2\delta+i,\delta+i)$, $0\leq i \leq 3$, are $k$-delightful, for $k$ such that $3k-1\leq d+1$.

\begin{prop}
The triangulations in Figure \ref{delightful rational} are delightful.
\end{prop}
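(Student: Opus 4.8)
The plan is to derive the Proposition directly from the double induction outlined just before it, with Lemma~\ref{D' del implica D del} doing the bulk of the work. Fix one of the triangulations $D$ of $P=P^0(4,2\delta+i,\delta+i)$ from Figure~\ref{delightful rational}, write $d=2\delta+i=\textrm{Area}(P)$, and note that $P$ has $d+2$ lattice points, so $r=d+1$; its toric surface is the scroll $S(\delta,\delta+i)$. By definition $D$ is delightful as soon as it is $k$-delightful for every $k$ meeting the precondition $\dim(\Sec_k(X_P))=3k-1\le r$, which for these scrolls is exactly the range $3k-1\le d+1$ (cf. Section~\ref{secant degree}), where $X_P$ is non-$k$-defective with $\nu_k(X_P)=\binom{d-2k+2}{k}$. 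For $k$ with $3k-1>r$ one has $\dim(\Sec_k(X_P))\le r<3k-1$, so the precondition of the definition of $k$-delightfulness fails and such $k$ impose no condition. Hence it suffices to establish $k$-delightfulness for all $k\ge1$ with $3k-1\le d+1$.

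First I would set up the induction on the pair $(d,k)$. The base in $k$ is $k=1$: the skew $1$-sets of any unimodular triangulation are precisely its triangles, so $\bar{\nu}_1(D)=\textrm{Area}(P)=d=\deg(X_P)=\nu_1(X_P)$, and every such $D$ is $1$-delightful. The base in $d$, for fixed $k$, is the minimal admissible value $d=3k-2$, where $3k-1=d+1=r$, the secant variety fills $\p^r$, and $\nu_k(X_P)=\binom{d-2k+2}{k}=\binom{k}{k}=1$; here I would read off directly from the strip in Figure~\ref{delightful rational} that it carries exactly one skew $k$-set, so $\bar{\nu}_k(D)=1=\nu_k(X_P)$.

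For the inductive step, assume $d\ge 3k-1$. I would take the corner triangle $T$ of normalized area $1$ at one end of the strip and set $D'=D\setminus T$ and $D''=\{T''\in D':T''\cap T=\emptyset\}$, as in Lemma~\ref{D' del implica D del}. As recorded before that lemma, $P'=P\setminus T$ is again a trapezium $P^0(4,2\delta'+i',\delta'+i')$ with $0\le i'\le3$, and $D'$ is lattice equivalent to one of the triangulations of Figure~\ref{delightful rational}, of area $d-1\ge 3k-2$; since $3k-1\le (d-1)+1$, the induction on $d$ gives that $D'$ is $k$-delightful. Likewise $D''$ is the standard strip triangulation of a trapezium of the same families, of area $d-3$, and $3(k-1)-1=3k-4\le (d-3)+1$ is equivalent to the standing hypothesis $3k-1\le d+1$; so the induction, now with $k$ replaced by $k-1$, gives that $D''$ is $(k-1)$-delightful. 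Lemma~\ref{D' del implica D del} then yields that $D$ is $k$-delightful, closing the induction and hence proving delightfulness.

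The main obstacle is the bookkeeping inside this step rather than any new idea. One must verify, for each of the four families $i=0,1,2,3$ and for the chosen end triangle $T$, that both residual configurations $D'$ and $D''$ are genuinely lattice equivalent to triangulations in Figure~\ref{delightful rational} (so the induction hypothesis applies) and that the two numerical ranges $3k-1\le (d-1)+1$ and $3(k-1)-1\le (d-3)+1$ hold simultaneously; the absence of rational singularities, guaranteed by Theorem~\ref{delightful class g=0}, is exactly what keeps $D'$ and $D''$ inside the list as $d$ decreases and the index $i$ cycles. The only genuinely separate computation is the base case $d=3k-2$, where $\bar{\nu}_k(D)=1$ must be checked on the strip directly (and where, since there are no singularities, no skew $k$-set is lost to a secant limit of a multiple point). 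Everything else follows mechanically from the lemma.
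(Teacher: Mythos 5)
Your argument is correct and is essentially the paper's own proof: both rest on the decomposition $D=D'\cup\{T\}$ with $D''$ the triangles disjoint from $T$, invoke Lemma \ref{D' del implica D del}, and run a double induction on $(d,k)$. The only difference is organizational — the paper treats $k=2$ separately with explicit base cases $d=4,5$ before handling $k\geq3$, whereas you unify everything with the bases $k=1$ and $d=3k-2$ and are somewhat more explicit about why $k$ with $3k-1>r$ imposes no condition.
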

\begin{proof}
Let $D$ denote one of the triangulations of Figure \ref{delightful rational} and let $d=2\delta+i$ be the number of triangles of $D$. 

Fix $k=2$. We first prove that $D$ is $2$-delightful by induction on $d$ and exploiting the fact that $D$ is $2$-delightful if and only if $\bar{\nu}_2(D')={{d-3}\choose{2}}$ and $\bar{\nu}_1(D'')=\#(D'')=d-3$ (see the proof of Lemma \ref{D' del implica D del}), for each $D$ as in Figure \ref{delightful rational}. Then we consider the case $k\geq3$. 

For $d=4$, the degenerations $D_{2,2}$, $D'_{2,2}$ of $S(2,2)$ and $D'_{1,3}$ of $S(1,3)$ are clearly $1$-delightful indeed each of them contains exactly one pair of disjoint triangles and $\nu_2(S(2,2))=\nu_2(S(1,3))=1$.
The same holds in the case $d=5$ for $D_{2,3}$, $D'_{2,3}$ and $D'_{1,4}$: one can easily check that each contains exactly three pairs of disjoint triangles and it is $\nu_2(S(2,3))=\nu_2(S(1,4))=3$.

For $d\geq6$, assume the thesis true for any degree $\leq d-1$. If $d$ is even, write $d=2\delta$. The degeneration $D_{\delta,\delta}$ (or  $D'_{\delta,\delta}$) of $P^0(4,2\delta,\delta)$ is obtained from $D'=D_{\delta-1,\delta}$  ($D'=D'_{\delta-1,\delta}$ respectively)  by adding a triangle on the right. Now $\bar{\nu}_2(D)\leq\bar{\nu}_2(D')+\bar{\nu}_1(D'')={{d-2}\choose{2}}+d-3={{d-3}\choose{2}}=\nu_2(S(\delta,\delta))$. In the same way, the degeneration $D=D'_{\delta-1,(\delta-1)+2}$ of $P^0(4,2(\delta-1)+2,(\delta-1)+2)$ is obtained by adding a triangle to $D'_{\delta-1,(\delta-1)+1}$ and computing the number $\bar{\nu}_2(D)$ we get the same conclusion. 
If  $d$ is odd, write $d=2\delta+1$. The degenerations $D_{\delta,\delta+1}$ and $D'_{\delta,\delta+1}$ are obtained respectively from $D_{\delta,\delta}$ and $D'_{\delta-1,\delta+1}$ by adding a triangle on the right end and the computation done in the case $d$ even also works. Similarly, $D=D'_{\delta-1,(\delta-1)+3}$, which is obtained from $D'_{\delta-1,(\delta-1)+2}$, turns out to be $2$-delightful.

Now, fix $k\geq 3$, and consider $d$ such that $3k-1\leq d+1$. Let $D$ be one of the degenerations of $P=P^0(4,2\delta+i,\delta+i)$ in Figure \ref{delightful rational}. We prove the statement by induction on $d$ and $k$ using an argument similar to that of above. Let $D'$ and $D''$ be as above and assume $D'$ is $k$-delightful and $D''$ is $(k-1)$-delightful. Then $\bar{\nu}_{k}(D)=\bar{\nu}_{k}(D')+\bar{\nu}_{k-1}(D'')={{d-2(k-1)}\choose{k}}$.
\end{proof}

This proves Theorem \ref{delightful class g=0}.

Our result fits with the ones obtained by Sturmfels and Sullivant. In \cite[Prop. 5.8]{SS} they proved that if a delightful term order exists for a rational normal scroll $S(\delta_1,\dots,\delta_n$) of dimension $n$, then we must have $\delta_j\in\{m,m+1,m+2,m+3\}$ for some $m$. They also proved in \cite[Prop. 5.11]{SS} that the converse holds in the $n=2$ case. With our approach we have proved the same result in the case $n=2$ and we have also constructed these delightful triangulations.

\subsection{The elliptic case}
Here we prove a classification result for the $g=1$ case. The polytopes we are dealing with are depicted in Theorem \ref{census1}.
\begin{thm}\label{delightful class g=1}
All  polytopes with $g=1$ and $5\leq d\leq 8$ admit delightful triangulations. They are lattice equivalent to the ones in Figure \ref{delightful elliptic}. 
\begin{figure}[h!]
\begin{center}\begin{tabular}{ll}
\setlength{\unitlength}{0.333333mm}$l=3$: & 
\begin{picture}(60,45)(0,0)
\put(0,0){\line(0,1){40}}\put(0,20){\line(1,-1){20}}\put(0,40){\line(1,-1){40}}\put(0,40){\line(1,-2){20}}\put(0,40){\line(3,-2){60}}
\put(20,20){\line(2,-1){40}}\put(0,0){\line(1,0){60}}\put(20,0){\line(0,1){20}}
\end{picture}\ \ \ \ 
\begin{picture}(60,45)(0,0)
\put(0,0){\line(0,1){40}}\put(0,20){\line(1,-1){20}}\put(0,40){\line(1,-1){20}}\put(0,40){\line(3,-2){60}}
\put(0,20){\line(1,0){20}}\put(0,20){\line(2,-1){40}}
\put(20,20){\line(1,-1){20}}\put(20,20){\line(2,-1){40}}\put(0,0){\line(1,0){60}}
\end{picture}\ \ \ \ 
\setlength{\unitlength}{0.333333mm}
\begin{picture}(80,45)(0,0)
\put(0,0){\line(0,1){40}}\put(0,20){\line(1,-1){20}}\put(0,40){\line(1,-1){20}}\put(0,40){\line(3,-2){60}}
\put(0,20){\line(1,0){20}}\put(0,20){\line(2,-1){40}}
\put(20,20){\line(1,-1){20}}\put(20,20){\line(2,-1){40}}\put(0,0){\line(1,0){80}}
\put(0,40){\line(2,-1){80}}\put(40,20){\line(1,-1){20}}
\end{picture}
\\
$l=4$: &
\begin{picture}(40,45)(0,0)
\put(0,0){\line(0,1){40}}\put(0,20){\line(1,1){20}}\put(0,40){\line(1,0){20}}\put(0,0){\line(1,1){20}}
\put(0,0){\line(1,2){20}}\put(0,0){\line(1,0){40}}\put(20,0){\line(0,1){40}}
\put(20,20){\line(1,-1){20}}\put(20,40){\line(1,-2){20}}
\end{picture}\ \ \ \ 
\begin{picture}(40,45)(0,0)
\put(0,0){\line(0,1){40}}\put(0,20){\line(1,-1){20}}\put(0,40){\line(1,-2){20}}\put(0,40){\line(1,0){20}}\put(0,40){\line(1,-1){40}}
\put(0,0){\line(1,0){40}}
\put(20,0){\line(0,1){40}}\put(20,40){\line(1,-2){20}}
\end{picture}\ \ \ \ 
\begin{picture}(40,45)(0,0)
\put(0,0){\line(0,1){40}}\put(0,0){\line(1,0){40}}\put(0,20){\line(1,-1){20}}\put(0,20){\line(1,0){20}}\put(0,40){\line(1,-1){40}}\put(0,20){\line(2,-1){40}}\put(40,0){\line(-1,2){20}}\put(0,40){\line(1,0){20}}\put(20,20){\line(0,1){20}}
\end{picture}
\ \ \ \ 
\setlength{\unitlength}{0.333333mm}
\begin{picture}(40,45)(0,0)
\put(0,0){\line(0,1){40}}\put(0,20){\line(1,-1){20}}\put(0,40){\line(1,-1){40}}\put(0,40){\line(1,-2){20}}
\put(0,0){\line(1,0){40}}\put(20,0){\line(0,1){40}}
\put(20,40){\line(1,-1){20}}\put(20,40){\line(1,-2){20}}\put(40,0){\line(0,1){40}}\put(0,40){\line(1,0){40}}
\end{picture}\ \ \ \ 
\begin{picture}(60,45)(0,0)
\put(0,0){\line(0,1){20}}\put(0,0){\line(1,0){60}}\put(0,20){\line(1,1){20}}\put(20,40){\line(1,-1){40}}\put(0,20){\line(1,0){20}}\put(0,20){\line(1,-1){20}}\put(20,0){\line(0,1){40}}\put(20,20){\line(1,-1){20}}\put(20,40){\line(1,-2){20}}\put(40,0){\line(0,1){20}}
\end{picture}\ \ \ \ 
\begin{picture}(60,45)(0,0)
\put(0,0){\line(0,1){20}}\put(0,0){\line(1,0){60}}\put(0,20){\line(1,1){20}}\put(20,40){\line(1,-1){40}}\put(0,0){\line(1,2){20}}\put(0,0){\line(1,1){20}}\put(20,0){\line(0,1){40}}\put(20,20){\line(1,0){20}}\put(20,0){\line(1,1){20}}\put(40,0){\line(0,1){20}}
\end{picture} \\ \ & 
\begin{picture}(40,45)(0,0)
\put(0,0){\line(0,1){20}}\put(0,0){\line(1,0){40}}\put(0,20){\line(1,1){20}}\put(20,40){\line(1,-2){20}}\put(0,0){\line(1,2){20}}\put(0,0){\line(1,1){20}}\put(20,0){\line(0,1){40}}\put(20,20){\line(1,-1){20}}
\end{picture}\ \ \ \ 
\begin{picture}(40,45)(0,0)
\put(0,0){\line(0,1){20}}\put(0,0){\line(1,0){40}}\put(0,20){\line(1,1){20}}\put(20,40){\line(1,-2){20}}
\put(0,20){\line(1,-1){20}}\put(0,20){\line(1,0){20}}\put(20,0){\line(0,1){40}}\put(20,20){\line(1,-1){20}}
\end{picture}\ \ \ \ 
\begin{picture}(40,45)(0,0)
\put(0,0){\line(0,1){20}}\put(0,0){\line(1,0){40}}\put(0,20){\line(1,1){20}}\put(20,40){\line(1,-2){20}}
\put(0,20){\line(1,-1){20}}\put(0,20){\line(2,-1){40}}\put(0,20){\line(1,0){20}}\put(20,20){\line(0,1){20}}\put(20,20){\line(1,-1){20}}
\end{picture}\ \ \ \ 
\begin{picture}(60,45)(0,0)
\put(0,0){\line(0,1){40}}\put(0,20){\line(1,-1){20}}\put(0,40){\line(1,-1){40}}\put(0,40){\line(1,-2){20}}
\put(0,0){\line(1,0){60}}\put(20,0){\line(0,11){40}}\put(40,20){\line(1,-1){20}}
\put(20,40){\line(1,-1){20}}\put(20,40){\line(1,-2){20}}\put(40,0){\line(0,1){20}}\put(0,40){\line(1,0){20}}
\end{picture}\ \ \ \ 
\begin{picture}(60,45)(0,0)
\put(0,0){\line(0,1){40}}\put(0,0){\line(1,0){60}}\put(0,40){\line(1,0){20}}\put(20,40){\line(1,-1){40}}
\put(0,0){\line(1,1){20}}\put(0,00){\line(1,2){20}}\put(0,20){\line(1,1){20}}\put(20,0){\line(0,1){40}}\put(20,0){\line(1,1){20}}\put(20,20){\line(1,0){20}}\put(40,0){\line(0,1){20}}
\end{picture}\\
\setlength{\unitlength}{0.333333mm}$l=5$: &
\begin{picture}(40,45)(0,0)
\put(0,0){\line(0,1){20}}\put(0,20){\line(1,0){40}}
\put(0,0){\line(1,0){20}}\put(20,0){\line(0,1){40}}
\put(20,0){\line(1,1){20}}\put(20,40){\line(1,-1){20}}\put(0,20){\line(1,-1){20}}\put(0,20){\line(1,1){20}}
\end{picture}\ \ \ \ 
\begin{picture}(40,45)(0,0)
\put(0,0){\line(0,1){20}}\put(0,20){\line(1,0){40}}\put(20,20){\line(0,1){20}}
\put(0,0){\line(1,0){20}}\put(0,0){\line(1,1){20}}\put(0,20){\line(1,1){20}}
\put(20,0){\line(1,1){20}}\put(20,40){\line(1,-1){20}}\put(0,0){\line(2,1){40}}
\end{picture}\ \ \ \ 
\begin{picture}(40,45)(0,0)
\put(0,0){\line(0,1){20}}\put(0,20){\line(1,0){40}}\put(20,20){\line(0,1){20}}
\put(0,0){\line(1,0){20}}\put(0,0){\line(1,1){20}}\put(0,20){\line(1,1){20}}
\put(20,0){\line(1,1){20}}\put(20,40){\line(1,-1){20}}\put(0,0){\line(2,1){40}}
\put(20,0){\line(1,0){20}}\put(40,0){\line(0,1){20}}
\end{picture}\ \ \ \ 
\begin{picture}(40,45)(0,0)
\put(0,0){\line(0,1){20}}\put(0,0){\line(1,0){40}}\put(0,20){\line(1,1){20}}\put(0,20){\line(1,0){20}}\put(20,0){\line(0,1){40}}
\put(20,20){\line(1,-1){20}}\put(20,40){\line(1,-1){20}}\put(20,40){\line(1,-2){20}}\put(40,0){\line(0,1){20}}\put(0,20){\line(1,-1){20}}
\end{picture}\ \ \ \ 
\begin{picture}(40,45)(0,0)
\put(0,0){\line(0,1){40}}\put(0,0){\line(1,0){40}}\put(0,40){\line(1,0){20}}\put(20,40){\line(1,-1){20}}
\put(0,0){\line(1,1){20}}\put(0,0){\line(1,2){20}}\put(0,20){\line(1,1){20}}\put(20,0){\line(0,1){40}}\put(20,0){\line(1,1){20}}\put(20,20){\line(1,0){20}}\put(40,0){\line(0,1){20}}
\end{picture}\ \ \ \ 
\begin{picture}(40,45)(0,0)
\put(0,0){\line(0,1){40}}\put(0,20){\line(1,-1){20}}\put(0,40){\line(1,-1){40}}\put(0,40){\line(1,-2){20}}
\put(0,0){\line(1,0){40}}\put(20,0){\line(0,11){40}}
\put(20,40){\line(1,-1){20}}\put(20,40){\line(1,-2){20}}\put(40,0){\line(0,1){20}}\put(0,40){\line(1,0){20}}
\end{picture}\\
\setlength{\unitlength}{0.333333mm}$l=6$: &
\begin{picture}(40,45)(0,0)
\put(0,0){\line(0,1){20}}\put(0,20){\line(1,0){40}}
\put(0,0){\line(1,0){20}}\put(20,0){\line(0,1){40}}\put(20,40){\line(1,0){20}}\put(40,20){\line(0,1){20}}
\put(20,0){\line(1,1){20}}\put(20,40){\line(1,-1){20}}\put(0,20){\line(1,-1){20}}\put(0,20){\line(1,1){20}}
\end{picture}
\end{tabular}\end{center}
\caption{Delightful triangulations of $g=1$ polytopes}\label{delightful elliptic}
\end{figure}
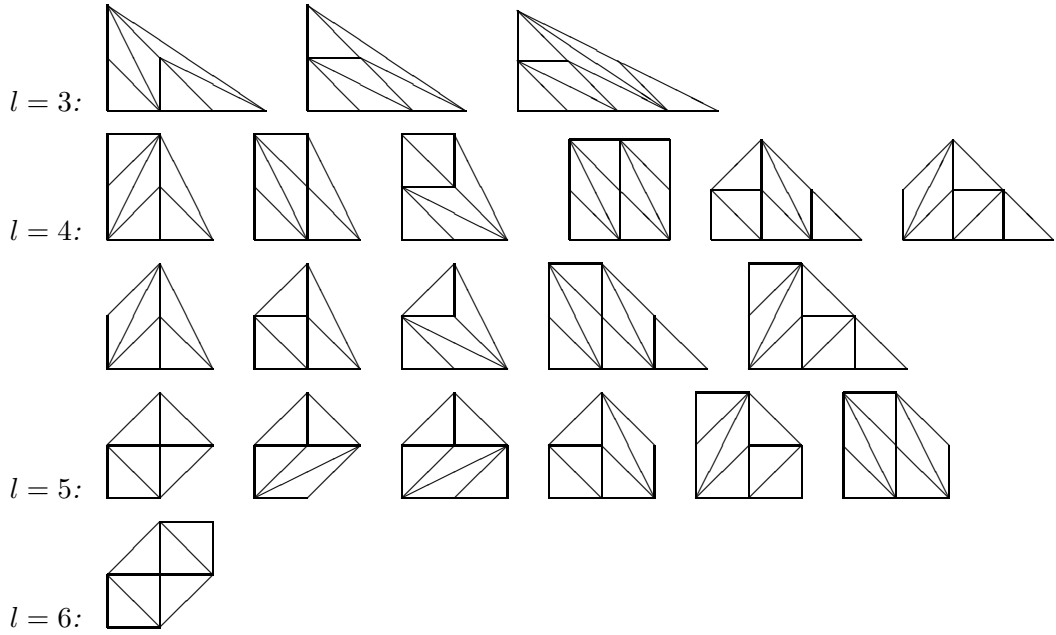
\end{thm}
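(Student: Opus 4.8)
The plan is to follow the two-stage strategy already used for the rational case in Theorem \ref{delightful class g=0}: first isolate those triangulations of the relevant polytopes that contain no singularity of the type recorded in Table \ref{caso razionale sec 1} or Table \ref{caso ellittico sec 1}, and then show that this necessary condition for delightfulness (forced by Theorem \ref{general sum}) is in fact sufficient. By Pick's theorem a $g=1$ polygon satisfies $d=B$, so the range $5\leq d\leq 8$ singles out only finitely many lattice-equivalence classes among those of Theorem \ref{census1}. I would organize the argument exactly as Figure \ref{delightful elliptic} is organized, by the number of edges $l$, treating the triangles $(l=3)$, the quadrilaterals $(l=4)$, the pentagons $(l=5)$ and the hexagon $(l=6)$ in turn.

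First I would produce the candidate triangulations. Since every lattice point must be a vertex of a unimodular triangulation, the unique interior point $p$ always has a full star; the key observation is that $p$ produces an \emph{obstructing} elliptic singularity exactly when its star is convex of area $\geq 5$ (otherwise $Q_p$ would carry $\dim\Sec_2(Z_p)=5$, as in Table \ref{caso ellittico sec 1}), so the admissible triangulations are those whose interior star is non-convex, as already seen for the hexagon in Figure \ref{esempi non-del}. Simultaneously, no boundary point may be the apex of a convex fan of four or more triangles, since that yields a rational scroll singularity with $\delta'\geq 5$ (Table \ref{caso razionale sec 1}). I would then run the same incremental build-and-exclude enumeration as in the rational proof: start the triangulation in the essentially unique way permitted along an edge, and at each step discard any extension completing a forbidden convex chain around a boundary point or a convex star of area $\geq 5$ around $p$. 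This case analysis is lengthy but mechanical and should terminate with precisely the configurations of Figure \ref{delightful elliptic}, giving both existence and uniqueness up to equiaffinity.

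Second I would prove delightfulness. Since $r=d$, the constraint $3k-1\leq r$ restricts attention to $k=2$ (and, only for $d=8$, also $k=3$); for larger $k$ the secant variety fills $\p^r$ and $k$-delightfulness is not imposed, while $k=1$ is automatic because $\bar{\nu}_1(D)$ equals the number of triangles, i.e. $d=\deg(X)$. The target numbers are known from Section \ref{secant degree}: these are all $\MM^k$-surfaces, so $\nu_2(X)=\binom{d-3}{2}$ and, when $d=8$, $\nu_3(X)=1$. The inequality $\nu_k(X)\geq\bar{\nu}_k(D)$ is Theorem \ref{old lower bound}; for the reverse I would count the skew $k$-sets of each surviving $D$ and verify $\bar{\nu}_k(D)\geq\nu_k(X)$, reusing the corner-peeling bookkeeping $N_k(D)=N_k(D')\cup\{(T,\sigma):\sigma\in N_{k-1}(D'')\}$ of Lemma \ref{D' del implica D del} whenever a corner triangle $T$ can be removed to land in a smaller polytope of the same list.

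The main obstacle will be the sufficiency step for the genuinely two-dimensional, cyclically symmetric cases, above all the hexagon $P^1(6,6,1)$ and the pentagons, where the interior point destroys the clean edge-triangle recursion that works for scrolls: removing a corner triangle need not keep $D'$ within the same family, so the inductive count of skew $k$-sets may break down. There I expect to verify $\bar{\nu}_k(D)=\nu_k(X)$ by a direct hand count against the tabulated secant degrees, and to confirm separately that the non-convex interior star is the \emph{only} source of delightfulness defect, i.e. that once both tables of singularities are avoided no further obstruction survives, as the discussion following Figure \ref{esempi non-del} suggests.
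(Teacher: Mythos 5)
There is a genuine gap in your first stage. You build the entire enumeration on the premise that the admissible triangulations are exactly those avoiding the singularities of Table \ref{caso razionale sec 1} and Table \ref{caso ellittico sec 1} (convex stars of area $\geq 5$ at the interior point, convex fans of $\geq 4$ triangles at boundary points), and you only promise to ``confirm separately'' that no further obstruction survives. That confirmation fails: the paper's own $d=5$ analysis exhibits a triangulation of $P^1(5,5,1)$ --- the one obtained by coning the boundary vertex $(0,0)$ over $(1,1)$, $(2,1)$, $(1,2)$ and triangulating the rest --- in which the star of the interior point is a convex triangle of area $3$ and the star of $(0,0)$ consists of four triangles with \emph{non-convex} union, so no lattice point realizes a configuration from either table; nevertheless $\bar{\nu}_2(D)=0<1=\nu_2(X)$ and the triangulation is not $2$-delightful. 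So ``no table singularity'' is necessary but not sufficient at the level of individual triangulations, and your build-and-exclude search would not terminate with precisely the configurations of Figure \ref{delightful elliptic}; it would retain spurious candidates that you would then have to eliminate by a criterion you have not supplied.

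The paper's actual engine is different and is what you are missing: the analogue of Lemma \ref{D' del implica D del} for $g=1$, namely Lemma \ref{lemma g=1 delightful}. Any triangulation avoiding the full convex star at $p$ contains a triangle $T$ with no vertex at $p$ such that $P'=P\setminus T$ is convex, and the estimate $\bar{\nu}_2(D)=\bar{\nu}_2(D')+\#(D'')\leq\binom{d-4}{2}+(d-4)=\binom{d-3}{2}=\nu_2(X_P)$ shows that $D$ is $2$-delightful \emph{if and only if} $D'$ is $2$-delightful \emph{and} $T$ is disjoint from exactly $d-4$ triangles of $D$. This exact-count equivalence both certifies delightfulness (replacing your direct hand counts) and drastically prunes the search: for each $d=6,7,8$ one only needs to examine triangles added to the already-classified delightful triangulations of degree $d-1$, rather than all table-singularity-free triangulations of the degree-$d$ polytopes. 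Only the base case $d=5$ is done by brute-force enumeration, and that is exactly where the counterexample above is excluded by hand. Your treatment of $k\geq 3$ (only $P^1(4,8,3)$ is relevant, $\nu_3=1$, one skew $3$-set) agrees with the paper and is fine.
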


 We need a preliminary remark. Notice that if $D$ do not contain an elliptic singularity of multiplicity $d$, then $D$ must contain at least a triangle $T$ that do not have any vertex at the interior lattice point and  $P\setminus T$ is convex.
 Define $P':=P\setminus T\subseteq P$: $\textrm{Area}(P')+1=\textrm{Area}(P)=d$, $g(P)=g(P')=1$ and assume that that $\dim(\Sec_2(X_P))=\dim(\Sec_2(X_{P'}))=5$. Consider the triangulation $D'\subseteq D$ of $P'$ obtained from $D$ by deleting that $T$,  $D'$ is still regular. 
Set $D''=\{T''\in D': T''\cap T=\emptyset\}\subseteq D'$: we have that $\#(D'')\leq d-4$. From this follows that   $\bar{\nu}_{2}(D)=\bar{\nu}_{2}(D')+\#(D'')\leq {{d-4}\choose 2}+(d-4)={{d-3}\choose 2}=\nu_2(X_P)$. 
We get the following lemma.

\begin{lemma}\label{lemma g=1 delightful}
In the notation of above, $D$ is $2$-delightful if and only if there exists a triangle $T$ such that $D'=D\setminus T$ is $2$-delightful and such that there are exactly $d-4$ triangles in $D$ not intersecting $T$.
\end{lemma}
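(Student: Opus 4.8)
The plan is to prove the equivalence as a ``squeezing'' argument around the upper bound already established just before the statement, namely
$$
\bar{\nu}_{2}(D)=\bar{\nu}_{2}(D')+\#(D'')\leq {{d-4}\choose 2}+(d-4)={{d-3}\choose 2}=\nu_2(X_P).
$$
The additive identity $\bar{\nu}_{2}(D)=\bar{\nu}_{2}(D')+\#(D'')$ records that every skew $2$-set of $D$ either avoids $T$, in which case it is a skew $2$-set of $D'$, or has the form $\{T,T''\}$ with $T''\in D''$. Since $D$ is $2$-delightful exactly when $\bar{\nu}_2(D)={{d-3}\choose 2}$, the whole argument reduces to deciding when equality propagates along this chain, that is, when the two intermediate bounds $\bar{\nu}_2(D')\leq{{d-4}\choose 2}$ (from Theorem \ref{old lower bound} applied to $X_{P'}$) and $\#(D'')\leq d-4$ are attained simultaneously.

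For the ``if'' implication I would simply substitute. Assuming a triangle $T$ as in the statement, so that $D'$ is $2$-delightful, hence $\bar{\nu}_2(D')={{d-4}\choose 2}$, and $\#(D'')=d-4$, the decomposition together with the elementary binomial identity ${{d-4}\choose 2}+(d-4)={{d-3}\choose 2}$ gives $\bar{\nu}_2(D)={{d-3}\choose 2}=\nu_2(X_P)$, so $D$ is $2$-delightful.

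For the ``only if'' implication I would first guarantee that an admissible $T$ exists at all, which is the only genuinely non-formal step of the lemma. By the preliminary remark preceding the statement, a triangle $T$ having no vertex at the interior lattice point and with $P\setminus T$ convex is available \emph{unless} $D$ carries an elliptic singularity of multiplicity $d$, i.e.\ unless all $d$ triangles meet at the interior point; but in that degenerate case no two triangles are disjoint, so $\bar{\nu}_2(D)=0$, contradicting $\bar{\nu}_2(D)={{d-3}\choose 2}>0$ (here $d\geq 5$, since $\dim(\Sec_2(X_P))=5$). Having fixed any such $T$, I would insert $\bar{\nu}_2(D)={{d-3}\choose 2}$ into the chain above; the outer terms then coincide, forcing every inequality to be an equality, which yields at once $\bar{\nu}_2(D')={{d-4}\choose 2}$ (that is, $D'$ is $2$-delightful) and $\#(D'')=d-4$, exactly the asserted conclusion. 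Note that this in fact shows the conclusion holds for \emph{every} admissible $T$, so only its existence is at issue.

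Apart from that existence point the lemma is formal, relying only on the two upper bounds recorded in the paragraph just above it. I therefore expect the real obstacle to lie not in this reduction itself but in its intended use: to run the induction on $d$ in the proof of Theorem \ref{delightful class g=1} one must, for each $g=1$ polytope in play, actually exhibit a triangle $T$ realizing $\#(D'')=d-4$ while keeping $P\setminus T$ convex, and controlling this combinatorial count polytope by polytope is the delicate part of the overall classification.
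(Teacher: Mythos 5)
Your argument is correct and is essentially the paper's own: the lemma is derived there directly from the chain $\bar{\nu}_{2}(D)=\bar{\nu}_{2}(D')+\#(D'')\leq \binom{d-4}{2}+(d-4)=\binom{d-3}{2}=\nu_2(X_P)$ established in the paragraph immediately preceding it, exactly as you do. Your only addition is to make explicit why an admissible $T$ exists in the ``only if'' direction (a full elliptic singularity would force $\bar{\nu}_2(D)=0$), a point the paper delegates to its preliminary remark; this is a harmless and correct completion.
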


\begin{proof}[Proof of Theorem \ref{delightful class g=1}]
Assume $k=2$. We start from the base case, $d=5$ and then we increase the degree by adding a triangle. In this way we can exploit Lemma \ref{lemma g=1 delightful} and cover all cases $5\leq d\leq9$.

We first consider the sub-polytopes of the triangle $P^1(3,9,3)$, i.e. the ones corresponding to internal projections  of the $3$-ple Veronese embedding of $\p^2$ in $\p^9$. 
 
Fix $d=5$. 
There are finitely many (regular) triangulations of each of these polytopes, up to equiaffinity.  A part from the cases with five triangles covering the interior lattice point (see Table \ref{caso ellittico sec 1}, first row), and from the case
\begin{center}
\setlength{\unitlength}{0.333333mm}
\begin{picture}(40,40)(0,0)
\put(0,0){\line(0,1){20}}\put(0,0){\line(1,0){20}}\put(20,0){\line(1,1){20}}\put(20,40){\line(1,-1){20}}\put(0,20){\line(1,1){20}}
\put(0,0){\line(1,1){20}}\put(0,0){\line(2,1){40}}\put(0,0){\line(1,2){20}}\put(20,20){\line(0,1){20}}\put(20,20){\line(1,0){20}}
\end{picture},
\end{center}
the remaining configurations contain a unique skew $2$-set, so they are $2$-delightful. It is easy and left to the reader.

Now fix $d=6$.
The only possible way to get $2$-delightful triangulations of $P^1(4,6,2)$  is adding a triangle $T$ to the $2$-delightful triangulations of subpolytopes with $d=5$ such that there are $2$ triangles in $D$ not intersecting $T$, by Lemma \ref{lemma g=1 delightful}.
The candidates have to be chosen among the following configurations
\begin{center}
\setlength{\unitlength}{0.333333mm}
\begin{picture}(40,40)(0,0)
\put(0,0){\line(0,1){20}}\put(0,0){\line(1,0){40}}\put(0,20){\line(1,1){20}}\put(20,40){\line(1,-2){20}}\put(0,0){\line(1,2){20}}\put(0,0){\line(1,1){20}}\put(20,0){\line(0,1){40}}\put(20,20){\line(1,-1){20}} 
\put(0,20){\thicklines\line(0,1){20}}\put(0,40){\thicklines\line(1,0){20}}\put(0,20){\thicklines\line(1,1){20}}
\end{picture}\ \ \ \ \ 
\begin{picture}(40,40)(0,0)
\put(0,0){\line(0,1){20}}\put(0,0){\line(1,0){40}}\put(0,20){\line(1,1){20}}\put(20,40){\line(1,-2){20}}
\put(0,20){\line(1,-1){20}}\put(0,20){\line(1,0){20}}\put(20,0){\line(0,1){40}}\put(20,20){\line(1,-1){20}} 
\put(0,20){\thicklines\line(0,1){20}}\put(0,40){\thicklines\line(1,0){20}}\put(0,20){\thicklines\line(1,1){20}}
\end{picture}\ \ \ \ \ 
\begin{picture}(40,40)(0,0)
\put(0,0){\line(0,1){20}}\put(0,0){\line(1,0){40}}\put(0,20){\line(1,1){20}}\put(20,40){\line(1,-2){20}}
\put(0,20){\line(1,-1){20}}\put(0,20){\line(2,-1){40}}\put(0,20){\line(1,0){20}}\put(20,20){\line(0,1){20}}\put(20,20){\line(1,-1){20}}
\put(0,20){\thicklines\line(0,1){20}}\put(0,40){\thicklines\line(1,0){20}}\put(0,20){\thicklines\line(1,1){20}}
\end{picture}\ \ \ \ \ and \ \ \ \ \ 
\begin{picture}(40,40)(0,0)
\put(0,0){\line(0,1){40}}\put(0,20){\line(1,-1){20}}\put(0,40){\line(1,-2){20}}\put(0,40){\line(1,0){20}}\put(0,40){\line(1,-1){40}}
\put(0,0){\line(1,0){40}}
\put(20,0){\line(0,1){40}}\put(20,40){\line(1,-2){20}}
\put(0,0){\thicklines\line(1,0){20}}\put(0,0){\thicklines\line(0,1){20}}\put(0,20){\thicklines\line(1,-1){20}}
\end{picture}\ \ \ \ \ 
\begin{picture}(40,40)(0,0)
\put(0,0){\line(0,1){40}}\put(0,20){\line(1,1){20}}\put(0,40){\line(1,0){20}}\put(0,20){\line(1,-1){20}}
\put(0,0){\line(1,0){40}}\put(20,0){\line(0,1){40}}\put(20,20){\line(1,-1){20}}\put(20,40){\line(1,-2){20}}\put(0,20){\line(1,0){20}}
\put(0,0){\thicklines\line(1,0){20}}\put(0,0){\thicklines\line(0,1){20}}\put(0,20){\thicklines\line(1,-1){20}}
\end{picture}\ \ \ \ \ 
\begin{picture}(40,40)(0,0)
\put(0,0){\line(0,1){40}}\put(0,0){\line(1,0){40}}\put(0,20){\line(1,-1){20}}\put(0,20){\line(1,0){20}}\put(0,40){\line(1,-1){40}}\put(0,20){\line(2,-1){40}}\put(40,0){\line(-1,2){20}}\put(0,40){\line(1,0){20}}\put(20,20){\line(0,1){20}}
\put(0,0){\thicklines\line(1,0){20}}\put(0,0){\thicklines\line(0,1){20}}\put(0,20){\thicklines\line(1,-1){20}}
\end{picture}\ .
\end{center}
The first set was obtained by adding a triangle to the $2$-delightful  triangulations of the quadrilateral $P^1(4,5,2)$ in Figure \ref{delightful elliptic}. Instead, to get the second set of configurations we first chose representatives of the equiaffinity class of the  $2$-delightful triangulations of $P^1(5,5,1)$ depicted in Figure \ref{delightful elliptic}, namely 
\begin{center}
\setlength{\unitlength}{0.333333mm} 
\begin{picture}(40,40)(0,0)
\put(0,20){\line(0,1){20}}\put(0,20){\line(1,-1){20}}\put(0,40){\line(1,-2){20}}\put(0,40){\line(1,0){20}}\put(0,40){\line(1,-1){40}}
\put(20,0){\line(1,0){20}}
\put(20,0){\line(0,1){40}}\put(20,40){\line(1,-2){20}}
\end{picture} \ \ $\cong$\ \ 
\begin{picture}(40,45)(0,0)
\put(0,0){\line(0,1){20}}\put(0,20){\line(1,0){40}}
\put(0,0){\line(1,0){20}}\put(20,0){\line(0,1){40}}
\put(20,0){\line(1,1){20}}\put(20,40){\line(1,-1){20}}\put(0,20){\line(1,-1){20}}\put(0,20){\line(1,1){20}}
\end{picture} \ \ \ \ \   and  \ \ \ \ \ 
\begin{picture}(40,40)(0,0)
\put(0,20){\line(0,1){20}}\put(0,20){\line(1,1){20}}\put(0,40){\line(1,0){20}}\put(0,20){\line(1,-1){20}}
\put(20,0){\line(1,0){20}}\put(20,0){\line(0,1){40}}\put(20,20){\line(1,-1){20}}\put(20,40){\line(1,-2){20}}\put(0,20){\line(1,0){20}}
\end{picture}\ 
\begin{picture}(40,40)(0,0)
\put(0,20){\line(0,1){20}}\put(20,0){\line(1,0){20}}\put(0,20){\line(1,-1){20}}\put(0,20){\line(1,0){20}}\put(0,40){\line(1,-1){40}}\put(0,20){\line(2,-1){40}}\put(40,0){\line(-1,2){20}}\put(0,40){\line(1,0){20}}\put(20,20){\line(0,1){20}}
\end{picture}\ \  $\cong$\ \ 
\begin{picture}(40,45)(0,0)
\put(0,0){\line(0,1){20}}\put(0,20){\line(1,0){40}}\put(20,20){\line(0,1){20}}
\put(0,0){\line(1,0){20}}\put(0,0){\line(1,1){20}}\put(0,20){\line(1,1){20}}
\put(20,0){\line(1,1){20}}\put(20,40){\line(1,-1){20}}\put(0,0){\line(2,1){40}}
\end{picture} \ ,
\end{center}
and then we added a triangle, as shown in the pictures. There are two triangles not intersecting $T$ in  the first, the forth and the sixth triangulation of $P(5,6,2)$ depicted above, so in these cases there are in all three skew $2$-sets and we have $2$-delightfulness.

Similarly, for $P¹(5,6,2)$ we may choose among the following configurations
\begin{center}
\setlength{\unitlength}{0.333333mm}
\begin{picture}(40,40)(0,0)
\put(0,0){\line(0,1){20}}\put(0,20){\line(1,0){40}}\put(0,0){\line(1,0){20}}\put(20,0){\line(0,1){40}}
\put(20,0){\line(1,1){20}}\put(20,40){\line(1,-1){20}}\put(0,20){\line(1,-1){20}}\put(0,20){\line(1,1){20}}
\put(20,0){\thicklines\line(1,1){20}}\put(40,0){\thicklines\line(0,1){20}}\put(20,0){\thicklines\line(1,0){20}}
\end{picture}\ \ \ \ \ 
\begin{picture}(40,40)(0,0)
\put(0,0){\line(0,1){20}}\put(0,20){\line(1,0){40}}\put(20,20){\line(0,1){20}}\put(0,0){\line(1,0){20}}\put(0,0){\line(1,1){20}}\put(0,20){\line(1,1){20}}\put(20,0){\line(1,1){20}}\put(20,40){\line(1,-1){20}}\put(0,0){\line(2,1){40}}\put(20,0){\thicklines\line(1,1){20}}\put(40,0){\thicklines\line(0,1){20}}\put(20,0){\thicklines\line(1,0){20}}
\end{picture}\ \ \ \ \ 
\begin{picture}(40,40)(0,0)
\put(0,0){\line(0,1){20}}\put(0,20){\line(1,0){40}}\put(20,20){\line(0,1){20}}\put(0,0){\line(1,0){20}}\put(0,0){\line(1,1){20}}\put(0,20){\line(1,1){20}}\put(20,0){\line(1,1){20}}\put(20,40){\line(1,-1){20}}\put(0,0){\line(2,1){40}}\put(0,20){\thicklines\line(1,1){20}}\put(0,20){\thicklines\line(0,1){20}}\put(0,40){\thicklines\line(1,0){20}}
\end{picture}
\ \ \ \ \  and \ \ \ \ \ 
\begin{picture}(40,40)(0,0)
\put(0,0){\line(0,1){20}}\put(0,0){\line(1,0){40}}\put(0,20){\line(1,1){20}}\put(20,40){\line(1,-2){20}}\put(0,0){\line(1,2){20}}\put(0,0){\line(1,1){20}}\put(20,0){\line(0,1){40}}\put(20,20){\line(1,-1){20}}
\put(20,40){\thicklines\line(1,-2){20}}\put(40,0){\thicklines\line(0,1){20}}\put(20,40){\thicklines\line(1,-1){20}}
\end{picture}\ \ \ \ \ 
\begin{picture}(40,40)(0,0)
\put(0,0){\line(0,1){20}}\put(0,0){\line(1,0){40}}\put(0,20){\line(1,1){20}}\put(20,40){\line(1,-2){20}}\put(0,20){\line(1,-1){20}}\put(0,20){\line(1,0){20}}\put(20,0){\line(0,1){40}}\put(20,20){\line(1,-1){20}}
\put(20,40){\thicklines\line(1,-2){20}}\put(40,0){\thicklines\line(0,1){20}}\put(20,40){\thicklines\line(1,-1){20}}
\end{picture}\ \ \ \ \ 
\begin{picture}(40,40)(0,0)
\put(0,0){\line(0,1){20}}\put(0,0){\line(1,0){40}}\put(0,20){\line(1,1){20}}\put(20,40){\line(1,-2){20}}
\put(0,20){\line(1,-1){20}}\put(0,20){\line(2,-1){40}}\put(0,20){\line(1,0){20}}\put(20,20){\line(0,1){20}}\put(20,20){\line(1,-1){20}}
\put(20,40){\thicklines\line(1,-2){20}}\put(40,0){\thicklines\line(0,1){20}}\put(20,40){\thicklines\line(1,-1){20}}
\end{picture}
\end{center}
The second triangulation is $2$-delightful and the same holds for the third and the fifth which are lattice equivalent. While the remaining configurations contain less than $3$ pairs of disjoint triangles.

For the hexagon $P^1(6,6,1)$, the candidates are 
\begin{center}
\setlength{\unitlength}{0.333333mm}
\begin{picture}(40,40)(0,0)
\put(0,0){\line(0,1){20}}\put(0,20){\line(1,0){40}}\put(0,0){\line(1,0){20}}\put(20,0){\line(0,1){40}}
\put(20,0){\line(1,1){20}}\put(20,40){\line(1,-1){20}}\put(0,20){\line(1,-1){20}}\put(0,20){\line(1,1){20}}
\put(20,40){\thicklines\line(1,0){20}}\put(40,20){\thicklines\line(0,1){20}}\put(20,40){\thicklines\line(1,-1){20}}
\end{picture}\ \ \ \ \ 
\begin{picture}(40,40)(0,0)
\put(0,0){\line(0,1){20}}\put(0,20){\line(1,0){40}}\put(20,20){\line(0,1){20}}\put(0,0){\line(1,0){20}}\put(0,0){\line(1,1){20}}\put(0,20){\line(1,1){20}}\put(20,0){\line(1,1){20}}\put(20,40){\line(1,-1){20}}\put(0,0){\line(2,1){40}}
\put(20,40){\thicklines\line(1,0){20}}\put(40,20){\thicklines\line(0,1){20}}\put(20,40){\thicklines\line(1,-1){20}}
\end{picture}
\ \ \ \ \ and \ \ \ \ \ 
\begin{picture}(40,40)(0,0)
\put(0,20){\line(0,1){20}}\put(0,20){\line(1,-1){20}}\put(0,40){\line(1,-2){20}}\put(0,40){\line(1,0){20}}\put(0,40){\line(1,-1){40}}
\put(20,0){\line(1,0){20}}\put(20,0){\line(0,1){40}}\put(20,40){\line(1,-2){20}}
\put(20,40){\thicklines\line(1,-2){20}}\put(40,0){\thicklines\line(0,1){20}}\put(20,40){\thicklines\line(1,-1){20}}
\end{picture}\ \ \ \ \ 
\begin{picture}(40,40)(0,0)
\put(0,20){\line(0,1){20}}\put(0,20){\line(1,1){20}}\put(0,40){\line(1,0){20}}\put(0,20){\line(1,-1){20}}
\put(20,0){\line(1,0){20}}\put(20,0){\line(0,1){40}}\put(20,20){\line(1,-1){20}}\put(20,40){\line(1,-2){20}}\put(0,20){\line(1,0){20}}
\put(20,40){\thicklines\line(1,-2){20}}\put(40,0){\thicklines\line(0,1){20}}\put(20,40){\thicklines\line(1,-1){20}}
\end{picture}\ \ \ \ \ 
\begin{picture}(40,40)(0,0)
\put(0,20){\line(0,1){20}}\put(20,0){\line(1,0){20}}\put(0,20){\line(1,-1){20}}\put(0,20){\line(1,0){20}}\put(0,40){\line(1,-1){40}}\put(0,20){\line(2,-1){40}}\put(40,0){\line(-1,2){20}}\put(0,40){\line(1,0){20}}\put(20,20){\line(0,1){20}}
\put(20,40){\thicklines\line(1,-2){20}}\put(40,0){\thicklines\line(0,1){20}}\put(20,40){\thicklines\line(1,-1){20}}
\end{picture}
\end{center}
Just the first and the third, which are lattice equivalent, are $2$-delightful.

For $d=7,8$, namely for the polytopes $P^1(4,7,3)$, $P^1(5,7,2)$ and $P^1(4,8,3)$, the proof is similar and the details are left to the reader. 

Consider finally the triangle $P^1(3,9,3)$. If there was any $2$-delightful triangulation, it would be obtained by adding a triangle to some $2$-delightful triangulation of $P^1(4,8,3)$,  i.e.,
\begin{center}
\setlength{\unitlength}{0.333333mm}
\begin{picture}(60,60)(0,0)
\put(0,0){\line(0,1){60}}\put(0,60){\line(1,-1){20}}\put(0,0){\line(1,0){60}}\put(0,40){\line(1,0){20}}\put(20,40){\line(1,-1){20}}
\put(0,0){\line(1,1){20}}\put(0,0){\line(1,2){20}}\put(0,20){\line(1,1){20}}\put(20,0){\line(0,1){40}}\put(20,0){\line(1,1){20}}\put(20,20){\line(1,0){20}}\put(40,0){\line(0,1){20}}\put(40,20){\line(1,-1){20}}
\put(0,60){\thicklines\line(1,-1){20}}\put(0,40){\thicklines\line(1,0){20}}\put(0,40){\thicklines\line(0,1){20}}
\end{picture}\ \ \ \ \ 
\begin{picture}(60,60)(0,0)
\put(0,0){\line(0,1){60}}\put(0,60){\line(1,-1){20}}\put(0,20){\line(1,-1){20}}\put(0,40){\line(1,-1){40}}\put(0,40){\line(1,-2){20}}
\put(0,0){\line(1,0){60}}\put(20,0){\line(0,11){40}}\put(40,20){\line(1,-1){20}}
\put(20,40){\line(1,-1){20}}\put(20,40){\line(1,-2){20}}\put(40,0){\line(0,1){20}}\put(0,40){\line(1,0){20}}
\put(0,60){\thicklines\line(1,-1){20}}\put(0,40){\thicklines\line(1,0){20}}\put(0,40){\thicklines\line(0,1){20}}
\end{picture}
\end{center}
but they both are not $2$-delightful.

Now we  analyze the remaining polytopes.
 To get possible $2$-delightful triangulations of $P^1(4,8,2)$ we add a triangle to the $2$-delightful triangulations of $P^1(5,7,2)$ in Figure \ref{delightful elliptic}:
\begin{center}
\setlength{\unitlength}{0.333333mm}
\begin{picture}(40,40)(0,0)
\put(0,0){\line(0,1){40}}\put(0,0){\line(1,0){40}}\put(0,40){\line(1,0){20}}\put(20,40){\line(1,-1){20}}\put(0,0){\line(1,1){20}}\put(0,0){\line(1,2){20}}\put(0,20){\line(1,1){20}}\put(20,0){\line(0,1){40}}\put(20,0){\line(1,1){20}}\put(20,20){\line(1,0){20}}\put(40,0){\line(0,1){20}}
\put(20,40){\thicklines\line(1,-1){20}}\put(40,20){\thicklines\line(0,1){20}}\put(20,40){\thicklines\line(1,0){20}}
\end{picture}\ \ \ \ \ 
\begin{picture}(40,40)(0,0)
\put(0,0){\line(0,1){40}}\put(0,20){\line(1,-1){20}}\put(0,40){\line(1,-1){40}}\put(0,40){\line(1,-2){20}}\put(0,0){\line(1,0){40}}\put(20,0){\line(0,11){40}}\put(20,40){\line(1,-1){20}}\put(20,40){\line(1,-2){20}}\put(40,0){\line(0,1){20}}\put(0,40){\line(1,0){20}}
\put(20,40){\thicklines\line(1,-1){20}}\put(40,20){\thicklines\line(0,1){20}}\put(20,40){\thicklines\line(1,0){20}}
\end{picture}\ .
\end{center}
The second  is the unique that is $2$-delightful since  $T$ is disjoint from four distinct triangles and we have six more skew $2$-sets from the subdivision of $P^1(5,7,2)$. So $\bar{\nu}_2(D)=10$.

Consider $P^1(3,6,3)$. The candidates are the configurations obtained by adding a triangle to the $2$-delightful triangulations of $P^1(4,5,2)$. We get:
\begin{center}
\setlength{\unitlength}{0.333333mm}
\begin{picture}(60,40)(-20,0)
\put(0,0){\line(0,1){20}}\put(0,0){\line(1,0){40}}\put(0,20){\line(1,1){20}}\put(20,40){\line(1,-2){20}}\put(0,0){\line(1,2){20}}\put(0,0){\line(1,1){20}}\put(20,0){\line(0,1){40}}\put(20,20){\line(1,-1){20}}
\put(-20,0){\thicklines\line(1,1){20}}\put(-20,0){\thicklines\line(1,0){20}}\put(00,0){\thicklines\line(0,1){20}}
\end{picture}\ \ \ \ \ 
\begin{picture}(60,40)(-20,0)
\put(0,0){\line(0,1){20}}\put(0,0){\line(1,0){40}}\put(0,20){\line(1,1){20}}\put(20,40){\line(1,-2){20}}
\put(0,20){\line(1,-1){20}}\put(0,20){\line(1,0){20}}\put(20,0){\line(0,1){40}}\put(20,20){\line(1,-1){20}}
\put(-20,0){\thicklines\line(1,1){20}}\put(-20,0){\thicklines\line(1,0){20}}\put(00,0){\thicklines\line(0,1){20}}
\end{picture}\ \ \ \ \ 
\begin{picture}(60,40)(-20,0)
\put(0,0){\line(0,1){20}}\put(0,0){\line(1,0){40}}\put(0,20){\line(1,1){20}}\put(20,40){\line(1,-2){20}}
\put(0,20){\line(1,-1){20}}\put(0,20){\line(2,-1){40}}\put(0,20){\line(1,0){20}}\put(20,20){\line(0,1){20}}\put(20,20){\line(1,-1){20}}
\put(-20,0){\thicklines\line(1,1){20}}\put(-20,0){\thicklines\line(1,0){20}}\put(00,0){\thicklines\line(0,1){20}}
\end{picture}.
\end{center}
The first and the second are $2$-delightful, since we have two skew $2$-sets coming from the triangulation of $P^1(4,5,2)$ and one more pair involving $T$. 

Finally, for $P^1(3,8,4)$ the proof is similar and left to the reader.

Now assume $k\geq3$. If $P=P^1(4,8,3)$, then $\Sec_3(X_P)$ fills up the space $\p^8$. The subdivisions of $P$ depicted in Figure \ref{delightful elliptic}, that are $2$-delightful, are also $3$-delightful since a (unique) skew $3$-set exists in both of them. In all other cases, namely for $P$ not belonging to the class $P=P^1(4,8,3)$ and $D$ as in Figure \ref{delightful elliptic}, we have  $\dim(\Sec_3(X))<8$, see Section \ref{secant degree}.
\end{proof}


\section{Tables}\label{tables}
In the following tables, we collect the triangulations of polytopes with $g\leq 1$, in which all the triangles have a common vertex $p$. They are non-delightful and in particular correspond to the singularities that cause $k$-delightfulness defect, for $k=2,3$, see Theorem \ref{general sum}.

In the first column we draw the subdivision of the polytope $Q=Q_p$; the degree of $Z=Z_{p}$, which corresponds to the number of triangles, is written in the second column, while the numbers $\nu_2(Z)$ and $\nu_3(Z)$ are collected respectively in the third and in the fourth column.
 
\newpage

\begin{figure}[h!]
\begin{tabular}{|l|l|c|c|c|}
 \hline 
\ & \ \ triangulation of $Q$\ \  & $\deg(Z)$ & $\nu_2(Z)$  & $\nu_3(Z)$\\
\hline
\hline
1.&
\setlength{\unitlength}{0.333333mm}
\begin{picture}(60,25)(0,0)
\put(0,0){\line(0,1){20}}
\put(0,0){\line(1,0){60}}
\put(0,20){\line(1,0){20}}
\put(0,0){\line(1,1){20}}
\put(20,0){\line(0,1){20}}
\put(20,20){\line(1,-1){20}}
\put(20,20){\line(2,-1){40}}
\put(17,16){$\bullet$}
\end{picture}\ \ \ \ \ 
\begin{picture}(60,25)(0,0)
\put(0,0){\line(0,1){20}}
\put(0,0){\line(1,0){60}}
\put(0,20){\line(1,0){20}}
\put(0,20){\line(1,-1){20}}\put(0,20){\line(2,-1){40}}\put(0,20){\line(3,-1){60}}
\put(20,20){\line(2,-1){40}}
\put(-3,16){$\bullet$}
\end{picture}
 & 4 & 1 & /\\
 \hline
 2.&
\setlength{\unitlength}{0.333333mm}
\begin{picture}(60,25)(0,0)
\put(0,0){\line(0,1){20}}
\put(0,0){\line(1,0){40}}
\put(0,20){\line(1,0){40}}
\put(40,0){\line(0,1){20}}
\put(20,0){\line(0,1){20}}
\put(0,0){\line(1,1){20}}
\put(20,20){\line(1,-1){20}}
\put(17,16){$\bullet$}
\end{picture} & 4 & 1 & /\\
 \hline
 3.& 
\setlength{\unitlength}{0.333333mm}
\begin{picture}(80,25)(0,0)
\put(0,0){\line(0,1){20}}
\put(0,0){\line(1,0){80}}
\put(0,20){\line(1,0){20}}
\put(0,0){\line(1,1){20}}
\put(20,0){\line(0,1){20}}
\put(20,20){\line(1,-1){20}}
\put(20,20){\line(2,-1){40}}
\put(20,20){\line(3,-1){60}}
\put(17,16){$\bullet$}
\end{picture}\ \ \ \ \ 
\begin{picture}(80,25)(0,0)
\put(0,0){\line(0,1){20}}
\put(0,0){\line(1,0){80}}
\put(0,20){\line(1,0){20}}
\put(0,20){\line(1,-1){20}}
\put(0,20){\line(2,-1){40}}
\put(0,20){\line(3,-1){60}}
\put(0,20){\line(4,-1){80}}
\put(20,20){\line(3,-1){60}}
\put(-3,16){$\bullet$}
\end{picture}
 & 5 & 3 & /\\
 \hline 
 4.&  
\setlength{\unitlength}{0.333333mm}
\begin{picture}(60,25)(0,0)
\put(0,0){\line(0,1){20}}
\put(0,0){\line(1,0){60}}
\put(0,20){\line(1,0){40}}
\put(20,0){\line(0,1){20}}
\put(0,0){\line(1,1){20}}
\put(20,20){\line(2,-1){40}}
\put(20,20){\line(1,-1){20}}
\put(40,20){\line(1,-1){20}}
\put(17,16){$\bullet$}
\end{picture}
 & 5 & 3 & /\\
 \hline
  5.&
\setlength{\unitlength}{0.333333mm}
\begin{picture}(100,25)(0,0)
\put(0,0){\line(0,1){20}}
\put(0,0){\line(1,0){100}}
\put(0,20){\line(1,0){20}}
\put(0,0){\line(1,1){20}}
\put(20,0){\line(0,1){20}}
\put(20,20){\line(1,-1){20}}
\put(20,20){\line(2,-1){40}}
\put(20,20){\line(3,-1){60}}
\put(20,20){\line(4,-1){80}}
\put(17,16){$\bullet$}
\end{picture}\ \ \ \ \ 
\begin{picture}(110,25)(0,0)
\put(0,0){\line(0,1){20}}
\put(0,0){\line(1,0){100}}
\put(20,20){\line(4,-1){80}}
\put(0,20){\line(1,0){20}}
\put(0,20){\line(1,-1){20}}
\put(0,20){\line(2,-1){40}}
\put(0,20){\line(3,-1){60}}
\put(0,20){\line(4,-1){80}}
\put(0,20){\line(5,-1){100}}
\put(-3,16){$\bullet$}
\end{picture}
 & 6 & 6 &/  \\
 \hline
6.&
\setlength{\unitlength}{0.333333mm}
\begin{picture}(80,25)(0,0)
\put(0,0){\line(0,1){20}}
\put(0,0){\line(1,0){80}}
\put(0,20){\line(1,0){40}}
\put(20,0){\line(0,1){20}}
\put(0,0){\line(1,1){20}}
\put(20,20){\line(2,-1){40}}
\put(20,20){\line(1,-1){20}}
\put(40,20){\line(2,-1){40}}
\put(20,20){\line(3,-1){60}}
\put(17,16){$\bullet$}
\end{picture} & 6 & 6 &/  \\
\hline

 7.&
\begin{picture}(80,20)(0,-5) $S(1,\delta-1)$ \end{picture}
 & \begin{picture}(20,20)(0,-5)
 $ \delta\geq 7$  \end{picture} & \begin{picture}(20,20)(0,-5) $ {{\delta-2}\choose{2}}$ \end{picture} & \begin{picture}(5,20)(0,-5) / \end{picture} \\
 \hline 
 8.& 
 \begin{picture}(80,20)(0,-5) $S(2,\delta-2)$ \end{picture}
 & 
\begin{picture}(20,20)(0,-5)
 $ \delta\geq 7$  \end{picture} &  \begin{picture}(20,20)(0,-5) $ {{\delta-2}\choose{2}}$ \end{picture}  &  \begin{picture}(20,20)(0,-5) $ {{\delta-4}\choose{3}}$ \end{picture}  \\
\hline 
\end{tabular}
\caption{Rational singularities}\label{caso razionale sec 1}
\end{figure}

\begin{figure}[h!!]
\begin{tabular}{|l|l|c|c|c|}
\hline
\ &\ \ triangulation of $Q$\ \  & $\deg(Z)$ & $\nu_2(Z)$ & $\nu_3(Z)$ \\
\hline
 \hline
1.
&
\setlength{\unitlength}{0.6mm}
\begin{picture}(25,22.5)(0,0)
\put(0,10){\line(0,1){10}}
\put(0,10){\line(1,0){20}}
\put(10,0){\line(0,1){20}}
\put(0,10){\line(1,-1){10}}
\put(0,20){\line(1,-1){10}}
\put(0,20){\line(1,0){10}}
\put(10,0){\line(1,1){10}}
\put(10,20){\line(1,-1){10}}
\put(8.5,8.5){$\bullet$}
\end{picture} \ \ \ \ \ 
\begin{picture}(25,22.5)(0,0)
\put(0,10){\line(1,1){10}}
\put(0,10){\line(1,0){10}}
\put(10,0){\line(0,1){20}}
\put(0,0){\line(0,1){10}}
\put(20,0){\line(-1,1){10}}
\put(0,0){\line(1,0){20}}
\put(0,0){\line(1,1){10}}
\put(10,20){\line(1,-2){10}}
\put(8.5,8.5){$\bullet$}
\end{picture}
 & 5 & 1 & / \\ \hline 
 2.&
\setlength{\unitlength}{0.6mm}
\begin{picture}(25,22.5)(0,0)
\put(0,10){\line(0,1){10}}
\put(0,10){\line(1,0){20}}
\put(10,0){\line(0,1){20}}
\put(0,10){\line(1,-1){10}}
\put(0,20){\line(1,-1){20}}
\put(0,20){\line(1,0){10}}
\put(10,0){\line(1,0){10}}
\put(20,0){\line(0,1){10}}
\put(10,20){\line(1,-1){10}}
\put(8,8){$\bullet$}
\end{picture}\ \ \ \ \ 
\begin{picture}(25,22.5)(0,0)
\put(0,10){\line(1,1){10}}
\put(0,10){\line(1,0){20}}
\put(10,0){\line(0,1){20}}
\put(0,0){\line(0,1){10}}
\put(20,0){\line(-1,1){10}}
\put(0,0){\line(1,0){20}}
\put(20,0){\line(0,1){10}}
\put(0,0){\line(1,1){10}}
\put(10,20){\line(1,-1){10}}
\put(8.5,8.5){$\bullet$}
\end{picture}\ \ \ \ \ 
\begin{picture}(25,22.5)(0,0)
\put(0,0){\line(0,1){20}}
\put(0,10){\line(1,0){20}}
\put(10,0){\line(0,1){10}}
\put(0,20){\line(1,-1){20}}
\put(0,20){\line(2,-1){20}}
\put(0,0){\line(1,0){20}}
\put(20,0){\line(0,1){10}}
\put(0,0){\line(1,1){10}}
\put(8.5,8.5){$\bullet$}
\end{picture}\ \ \ \ \ 
\begin{picture}(35,22.5)(0,0)
\put(0,0){\line(0,1){20}}
\put(0,0){\line(1,0){30}}
\put(0,20){\line(3,-2){30}}
\put(0,0){\line(1,1){10}}
\put(0,10){\line(1,0){10}}
\put(0,20){\line(1,-1){10}}
\put(10,0){\line(0,1){10}}
\put(10,10){\line(1,-1){10}}
\put(10,10){\line(2,-1){20}}
\put(8.5,8.5){$\bullet$}
\end{picture}
 & 6 & 3 & / \\
 \hline
3.
&
\setlength{\unitlength}{0.6mm}
\begin{picture}(25,22.5)(0,0)
\put(0,0){\line(0,1){20}}
\put(0,10){\line(1,0){20}}
\put(10,0){\line(0,1){20}}
\put(0,20){\line(1,-1){20}}
\put(0,20){\line(1,0){10}}
\put(0,0){\line(1,0){20}}
\put(20,0){\line(0,1){10}}
\put(10,20){\line(1,-1){10}}
\put(0,0){\line(1,1){10}}
\put(8.5,8.5){$\bullet$}
\end{picture}\ \ \ \ \ 
\begin{picture}(35,22.5)(0,0)
\put(0,0){\line(0,1){10}}
\put(0,10){\line(1,0){20}}
\put(10,0){\line(0,1){20}}
\put(10,10){\line(1,-1){10}}
\put(0,10){\line(1,1){10}}
\put(0,0){\line(1,0){30}}
\put(30,0){\line(-1,1){10}}
\put(10,20){\line(1,-1){10}}
\put(0,0){\line(1,1){10}}
\put(10,10){\line(2,-1){20}}
\put(8.5,8.5){$\bullet$}
\end{picture}
 & 7 & 6 & / \\
 \hline 
4.
&
\setlength{\unitlength}{0.6mm}
\begin{picture}(25,22.5)(0,0)\put(0,0){\line(0,1){20}}
\put(0,0){\line(1,0){20}}
\put(20,0){\line(0,1){20}}
\put(0,20){\line(1,0){20}}
\put(10,0){\line(0,1){20}}
\put(0,10){\line(1,0){20}}
\put(0,20){\line(1,-1){20}}
\put(0,0){\line(1,1){20}}
\put(8.5,8.5){$\bullet$}
\end{picture}\ \ \ \ \ 
\begin{picture}(45,22.5)(0,0)
\put(0,0){\line(0,1){20}}
\put(0,0){\line(1,0){40}}
\put(0,10){\line(1,0){20}}
\put(0,20){\line(1,-1){20}}
\put(10,0){\line(0,1){10}}
\put(0,20){\line(2,-1){40}}
\put(0,0){\line(1,1){10}}
\put(10,10){\line(2,-1){20}}
\put(10,10){\line(3,-1){30}}
\put(8.5,8.5){$\bullet$}
\end{picture}
& 8 & 10 & / \\
 \hline
5.&
\setlength{\unitlength}{0.6mm}
\begin{picture}(35,22.5)(0,0)
\put(0,0){\line(0,1){20}}
\put(0,10){\line(1,0){20}}
\put(10,0){\line(0,1){20}}
\put(0,20){\line(1,-1){20}}
\put(0,20){\line(1,0){10}}
\put(0,0){\line(1,0){30}}
\put(30,0){\line(-1,1){10}}
\put(10,20){\line(1,-1){10}}
\put(0,0){\line(1,1){10}}
\put(10,10){\line(2,-1){20}}
\put(8.5,8.5){$\bullet$}
\end{picture}  
 & 8 & 10 & 1 \\
 \hline
6.&
\setlength{\unitlength}{0.6mm}
\begin{picture}(35,32.5)(0,0)
\put(0,0){\line(0,1){30}}
\put(0,0){\line(1,0){30}}
\put(0,30){\line(1,-1){30}}
\put(0,0){\line(1,1){10}}
\put(0,10){\line(1,0){20}}
\put(10,0){\line(0,1){20}}
\put(0,20){\line(1,-1){20}}
\put(10,10){\line(2,-1){20}}
\put(10,10){\line(-1,2){10}}
\put(8.5,8.5){$\bullet$}
\end{picture} 
 & 9 & 15 & 4 \\
 \hline
\end{tabular}
\caption{Elliptic singularities}\label{caso ellittico sec 1}
\end{figure}

\section*{Acknowledgements}
I would like to thank C. Ciliberto for introducing me to the problem of studying secant varieties and suggesting to me the idea of using toric degenerations as a tool to compute the secant degrees, while I was a PhD student of him. I also want to thank  B. Sturmfels for suggesting me to classify the delightful triangulations. 
Finally, I am deeply grateful to R. Piene for many helpful and stimulating discussions during the preparation of this paper.


\end{document}